\definecolor{Red}{rgb}{1,0,0}
\definecolor{Blue}{rgb}{0,0,1}
\definecolor{Olive}{rgb}{0.41,0.55,0.13}
\definecolor{Yarok}{rgb}{0,0.5,0}
\definecolor{Green}{rgb}{0,1,0}
\definecolor{MGreen}{rgb}{0,0.8,0}
\definecolor{DGreen}{rgb}{0,0.55,0}
\definecolor{Yellow}{rgb}{1,1,0}
\definecolor{Cyan}{rgb}{0,1,1}
\definecolor{Magenta}{rgb}{1,0,1}
\definecolor{Orange}{rgb}{1,.5,0}
\definecolor{Violet}{rgb}{.5,0,.5}
\definecolor{Purple}{rgb}{.75,0,.25}
\definecolor{Brown}{rgb}{.75,.5,.25}
\definecolor{Grey}{rgb}{.5,.5,.5}
\newcommand{\E}[1]{\mathbb{E}\!\left[#1\right]}
\newcommand{\ignore}[1]{\relax}
\newtheorem{theorem}{Theorem}[section]
\newtheorem{remark}[theorem]{Remark}
\newtheorem{lemma}[theorem]{Lemma}
\newtheorem{proposition}[theorem]{Proposition}
\newtheorem{claim}[theorem]{Claim}
\newtheorem{definition}[theorem]{Definition}
\definecolor{Red}{rgb}{1,0,0}
\definecolor{Blue}{rgb}{0,0,1}
\definecolor{Olive}{rgb}{0.41,0.55,0.13}
\definecolor{Green}{rgb}{0,1,0}
\definecolor{MGreen}{rgb}{0,0.8,0}
\definecolor{DGreen}{rgb}{0,0.55,0}
\definecolor{Yellow}{rgb}{1,1,0}
\definecolor{Cyan}{rgb}{0,1,1}
\definecolor{Magenta}{rgb}{1,0,1}
\definecolor{Orange}{rgb}{1,.5,0}
\definecolor{Violet}{rgb}{.5,0,.5}
\definecolor{Purple}{rgb}{.75,0,.25}
\definecolor{Brown}{rgb}{.75,.5,.25}
\definecolor{Grey}{rgb}{.5,.5,.5}
\definecolor{Pink}{rgb}{1,0,1}
\definecolor{DBrown}{rgb}{.5,.34,.16}
\definecolor{Black}{rgb}{0,0,0}
\author{
{\sf David Gamarnik}\thanks{MIT; e-mail: {\tt gamarnik@mit.edu}. Research supported  by the NSF grants CMMI-1335155.}
\and
{\sf Ilias Zadik}\thanks{MIT; e-mail: {\tt izadik@mit.edu}}
}
\begin{document}

\title{Sparse High-Dimensional Linear Regression. \\ Algorithmic Barriers and a Local Search Algorithm.}
\date{}

\maketitle

\begin{abstract}
We consider a sparse high dimensional regression model where the goal is to recover a $k$-sparse unknown vector $\beta^*$ from $n$ noisy linear observations of the form $Y=X\beta^*+W \in \mathbb{R}^n$ where $X \in \mathbb{R}^{n \times p}$ has iid $N(0,1)$ entries and $W \in \mathbb{R}^n$ has iid $N(0,\sigma^2)$ entries. Under certain assumptions on the parameters, an intriguing assymptotic gap appears between the minimum value of $n$, call it $n^*$, for which the recovery is information theoretically possible, and the minimum value of $n$, call it $n_{\mathrm{alg}}$, for which an efficient algorithm is known to provably recover $\beta^*$. In \cite{gamarnikzadik} it was conjectured that the gap is not artificial, in the sense that for sample sizes $n \in [n^*,n_{\mathrm{alg}}]$ the problem is algorithmically hard. 
 
 We support this conjecture in two ways. Firstly, we show that the optimal solution of the LASSO provably fails to $\ell_2$-stably recover the unknown vector $\beta^*$ when $n \in [n^*,c n_{\mathrm{alg}}]$, for some sufficiently small constant $c>0$. Secondly, we establish that $n_{\mathrm{alg}}$, up to a multiplicative constant factor, is a phase transition point for the appearance of a certain Overlap Gap Property (OGP) over the space of $k$-sparse vectors. The presence of such an Overlap Gap Property phase transition, which originates in statistical physics, is known to provide evidence of an algorithmic hardness. Finally we show that if $n>C n_{\mathrm{alg}}$ for some large enough constant $C>0$, a very simple algorithm based on a local search improvement rule is able both to $\ell_2$-stably recover the unknown vector $\beta^*$ and to infer correctly its support, adding it to the list of provably successful algorithms for the high dimensional linear regression problem.
\end{abstract}

\section{Introduction} 
We consider the following high-dimensional regression model. $n$ noisy linear observations of a vector  $\beta^* \in \mathbb{R}^p$ of the form $Y=X\beta^*+W$ are observed, for some $X \in \mathbb{R}^{ n \times p}$ and $W \in \mathbb{R}^n$. Given these observations, and the knowledge of $X$, but not of $W$, the vector $\beta^*$ needs to be inferred. The goal is to infer $\beta^*$ with the minimum number of observations $n$. Throughout the paper we call $X$ the measurement matrix and $W$ the noise vector. 

We are interested in the high dimensional setting where $n$ is order of magnitude less than $p$, and they both diverge to infinity. High-dimensionality is motivated by various statistical applications over the last decade for example in the field of radiology and biomedical imaging (see e.g. \cite{DonohoMRI} and references therein) and in the field of genomics  \cite{BickelGenome}, \cite{JASAgenomics} and it has been very common in the literature during the past decade \cite{wainwright2009information},\cite{BertsimasRegression},\cite{johnstone2009consistency}. This, in principle makes the recovery problem impossible even if $W=0$, as in this case the underlying linear system is underdetermined. This difficulty is commonly adressed by imposing a sparsity assumption on the vector $\beta^*$. More specifically, we say that the unknown vector $\beta^*$ is $k$-sparse (exactly $k$-sparse) if it has at most $k$ non-zero coordinates (exactly $k$-nonzero coordinates). The sparsity is a very useful assumption in applications,  for example in compressed sensing~\cite{candes2005decoding}, \cite{donoho2006compressed} , biomedical imaging \cite{MRImed}, \cite{DonohoMRI}  and sensor networks \cite{CSwireless}, \cite{CSsensor}, but also in theory \cite{donoho2006compressed}. For our purposes we assume that the value of $k$ is known for all the results. Furthermore, we are interested in both the case that $\beta^*$ is generally $k$-sparse and also $\beta^*$ is  exactly $k$-sparse, and we make clear on the statement of each result which assumption we are making on $\beta^*$.

We also make probabilistic assumptions on $X$ and $W$. we assume that each row of $X$ is generated as an iid sample from an isotropic $\mathcal{N}\left(0,\Sigma\right),$ where we take $\Sigma=I_{p}$. Note that the Gaussianity of the data rows is, in a standard way, justified from the Central Limit theorem and is very common in the literature \cite{Casto11}, \cite{Lucas17}, \cite{Van13}, \cite{Tony18}, \cite{wainwright2009sharp}, \cite{wainwright2009information},\cite{wang2010information}. Furthermore, the case $\Sigma=I_p$, which can be considered unrealistic from an applied point of view, has been considered broadly in the literature as an idealized assumption which allows broader technical development which can usually be generalized  \cite{Casto11}, \cite{Lucas17}, \cite{wainwright2009sharp}, \cite{wainwright2009information}, \cite{wang2010information}. We assume also that $W$ consists of iid $N(0,\sigma^2)$ entries for some $\sigma^2>0$, which is a standard assumption in the statistics literature \cite{wainwright2009information},  \cite{wang2010information} and \cite{donoho2006counting}.

In this paper, we focus on two notions of recovery for the unknown vector $\beta^*$. Firstly, we consider the notion of \textit{support recovery} \cite{SignDen},\cite{wainwright2009information},\cite{chai} the task of finding an estimator vector $\hat{\beta}$ with support approximately equal to the support of $\beta^*$, where the Hamming distance is the underlying metric. support recovery is also known in the literature as sparsity pattern recovery task \cite{Galen13}, variable selection (see \cite{Ed2012} and references therein) or model selection \cite{Model93}, \cite{Aos06}. Secondly, we consider the notion of \textit{$\ell_2$ stable recovery} \cite{candes},\cite{davies} the task of finding an estimator vector $\hat{\beta}$ such that $\| \hat{\beta}-\beta^*\|_2 \leq C \sigma$, for some $C>0$. In words, the estimator vector is close to the unknown vector in the $\ell_2$ distance up to the level of noise.  Because of our probabilistic assumptions on $X,W$ both recoveries are desired to occur with high probability \textit{(w.h.p.)} with respect to the randomness of $X,W$, that is with probability tending to one, as $n,p,k \rightarrow +\infty$. Here the limit is taken under certain assumptions on the relation between the parameters $n,p,k,\sigma^2$ that will be stated explicitly in the next sections. Finally, it is important to point out that, similarly with \cite{gamarnikzadik}, we generally think of the case of small enough sparsity so that the logarithm of $k$ is much smaller than $\log p$, and hence \textit{the sparsity level is sublinear in the feature size $p$}. On the other hand, some of our results,  such as the ones described in subsection 2.2. below, apply under the more general condition $k \leq p/3$.

Various efficient algorithms have been proven to recover w.h.p. the vector $\beta^*$ in the two notions of recovery we mention above, but always under the assumption that $n \geq C k \log p$ for some universal constant $C>0$. For this reason we define $n_{\mathrm{alg}}:=k \log p$. Specifically, with respect to support recovery, if $n \geq (1+\epsilon) 2 n_{\mathrm{alg}}= (1+\epsilon) 2k \log p$, for some $\epsilon>0$ it is proven by Wainwright and Cai et al in \cite{wainwright2009sharp} and \cite{chai} respectively that the optimal solution of an associated $\ell_1$-constrained quadratic optimization formulation called LASSO \begin{align}\label{LassoSt} \text{LASSO}_\lambda: \min_{\beta \in \mathbb{R}^p} n^{-1}\|Y-X\beta\|^2_2+\lambda \|\beta\|_1 
\end{align}for appropriately chosen tuning parameter $\lambda>0$, and that the output of a simple greedy algorithm called Orthogonal Matching Pursuit, both recover exactly the support of $\beta^*$ w.h.p.
With respect to $\ell_2$ stable recovery, the tighter results known are for the performance of LASSO and of a linear program called the Dantzig selector \cite{candes2007,bickel2009}, both of which requires $n \geq Cn_{\mathrm{alg}}$. More specifically, an easy corollary of the seminar work by Bickel, Ritov and Tsybakov \cite{bickel2009} applied to $X$ with Gaussian iid entries implies that as long as  $n \geq C n_{\mathrm{alg}}=Ck\log p$ for some sufficiently large constant $C>0$ if $\lambda=A\sigma \sqrt{\log p/n}$ the optimal solution $\hat{\beta}_{\mathrm{LASSO},\lambda}$ of  $\mathrm{LASSO}_{\lambda}$ satisfies for some constant $c>0$, $\|\hat{\beta}-\beta^*\|_2 \leq c\sigma$ w.h.p., i.e. it $\ell_2$-stably recovers the vector $\beta^*$, w.h.p. Tighter results for the performance on LASSO and the constants $c,C$ are established in the literature (see \cite{Chris13} and references therein), yet they do not apply in the regime where the sparsity is sublinear to the feature size $p$, which as it is explained above, is the main focus of this work.


However, in the case $n \leq c n_{\mathrm{alg}}=ck\log p$ where $c>0$ is a small constant, fewer results are known. In the context of support recovery when $n \leq c n_{\mathrm{alg}}=ck\log p$ where $c>0$ is a small constant, it is established in \cite{wang2010information} that if $n \leq cn_{\mathrm{alg}}$ for some constant $c>0$, it becomes information theoretic impossible to recover the support of any $k$-sparse vector $\beta^*$. In their setting, however, the entries of $\beta^*$ are allowed to take arbitrary small non-zero values. Specifically the absolute values of the non-zero entries are allowed to be of the same order as $\frac{1}{\sqrt{k}}$. This small magnitude of the non-zero entries naturally leads to larger sample complexity. The situation changes though if the non-zero entries of $\beta^*$ are, in absolute values, bounded away from zero by a constant. For example assuming $\beta^*$ is binary, that is $\beta^* \in \{0,1\}^p$, if we have also $k \leq \min \{1,\sigma^2\}\exp \left(C\sqrt{\log p} \right)$ for some $C>0$ and $\sigma^2 $ is much smaller than $k$, the tight information theoretic limit for recovering all but a negligible fraction of the support of $\beta^*$ is known to be equal to $n^*:=2k \log p/\log \left(\frac{2k}{\sigma^2}+1\right)$ w.h.p. which is asymptotically less than $ k \log p$, as established by Gamarnik and Zadik in the conference paper \cite{gamarnikzadik}.  The techniques of this paper are expected to generalize from the binary case to the case where $\beta^*$ is arbitrary with $|\beta^*|_{\mathrm{min}} \triangleq \min\{ |\beta^*_{i}| \big{|} \beta^*_i \not = 0\} \geq 1$.  Here 1 can be replaced with an arbitrary constant that does depends on $n,p,k,\sigma^2$. Rad in \cite{Rad2011} has independently partially proven a similar positive part of this result; he established that for some large enough constant $C>0$, if $n>Cn^*$ then one can recover exactly the support of $\beta^*$ and under the general condition $|\beta^*|_{\mathrm{min}} \geq 1$. To the best of our knowledge, no computationally efficient estimator is known to accurately recovering the support of $\beta^*$ for this number of samples. The main technical reason is that most of the results in the literature usually require a structural property to hold for $X$, such as the Restricted Isometry Property (RIP), Restricted Eigenvalue Property (RE) or Uniform Uncertainty Property  (UUP)  (see e.g. \cite{vandegeer2009}, \cite[Chapter 11]{Book15} and references therein), which is not known to hold for a matrix $X$ with iid standard Gaussian entries with less than $k \log p$ rows.  This abscence of computationally efficient results for support recovery  naturally brings the question of whether, under the assumption $|\beta^*|_{\min} \geq 1$,  efficient algorithms can be proven to recover the support of $\beta^*$ when $n^* \leq n \leq cn_{\mathrm{alg}}$ for some small constant $c>0$. This question is the main focus of this paper.

In the case  $n \leq c n_{\mathrm{alg}}=ck\log p$ where $c>0$ is a small constant, even fewer results are known for $\ell_2$-stably recovery.  In the case $\beta^*$ is binary, the result of Rad \cite{Rad2011} implies that exact recovery of $\beta^*$ is possible with order $n^*$ samples.  Hence, as the vector can be recovered exactly, it  can be also trivially $\ell_2$-stably recovered, granting order $n^*$ samples sufficient for recovery. To the best of our knowledge, no general information-theoretic result is known in the case $|\beta^*|_{\min} \geq 1$. For computationally efficient recovery, the most relevant result for our setting and $\ell_2$-stable recovery when  $n \leq c n_{\mathrm{alg}}=ck\log p$ , appears in \cite{Chris13} and establishes that LASSO fails to $\ell_2$-stably recover $\beta^*$ in this regime. Yet, the analysis in \cite{Chris13} trivializes the moment we assume that the sparsity level is sublinear to the feature size, i.e. $k/p \rightarrow 0$. This makes the study of $\ell_2$ stably recovery when $k/p \rightarrow 0$a wide open research direction. In particular, it  leaves open the question of whether LASSO or any other efficient estimator work well in this regime. In this paper, we present the first, to the best of our knowledge, (negative) result on LASSO in the regime $n^* \leq n \leq c n_{\mathrm{alg}}$ and when the sparsity level $k$ is sublinear in $p$.

It should be noted that in the more restrictive case that either the $k$-sparse vector $\beta^*$ is known to satisfy a structural ordering property called power allocation, or the matrix $X$ is assumed to be spatially coupled - a statistical physics notion-, variants of a computationally efficient scheme called Approximate Message Passing have been proven to succesfully work in the regime  $n^* \leq n \leq cn_{\mathrm{alg}}$. \cite{Barron12, Barron14, Rush17, Barbier14, DonohoAMP}. Nevertheless, we are interested here in the general case for $k$-sparse $\beta^*$ where either the $\beta^*$ is binary or it satisfies $|\beta^*|_{\mathrm{min}} \geq 1$, where no ordering is assumed to be known a priori to the statistician, and the case where $X$ has i.i.d. Gaussian entries where spatial coupling does not hold. In this general case, to the best of our knowledge, no results establishes that Approximate Message Passing works when $n^* \leq n \leq cn_{\mathrm{alg}}$ and there is a strong belief that any computationally efficient scheme fails \cite{gamarnikzadik}, as we describe in the following paragraph.

In \cite{gamarnikzadik} the authors conjecture that in the regime $n^* \leq n \leq cn_{\mathrm{alg}}$ the support recovery problem with a general $k$-sparse $\beta^*$ with $|\beta^*|_{\mathrm{min}} \geq 1$ is algorithmically hard, in the sense that there is no efficient (poynomial time) algorithm that succeeds in recovering the support of $\beta^*$ w.h.p. Evidence for this conjecture comes from the provable failure of several known efficient algorithms in this regime. Specifically in \cite{wainwright2009sharp} it is shown that LASSO provably fails to recover the support of $\beta^*$ w.h.p. when $n<(1-\epsilon)2n_{\mathrm{alg}}$ for any $\epsilon>0$, in the sense that for any $\beta^*$ the optimal solution of LASSO will not have the same signed support as $\beta^*$ w.h.p. Furthermore, via a combinatorial geometric argument  the authors in \cite{donoho2006counting} show that if $n<(1-\epsilon)2n_{\mathrm{alg}}$ for any $\epsilon>0$, then the optimal solutions of another estimator, similar to LASSO, called Basis Pursuit, also fails to recover the unknown the support of the unknown vector $\beta^*$ w.h.p. in the special case $\sigma^2=0$.

An attempt to explain the apparent algorithmic hardness in the general case when $n^* \leq n \leq cn_{\mathrm{alg}}$ is made in \cite{gamarnikzadik}, under the additional assumption that $\beta^*$ is exactly $k$-sparse, that is it has exactly $k$ non-zero coordinates, and binary (though the technique is expected to generalize from the binary case to the general case where $|\beta^*|_{\mathrm{min}} \geq 1$). The authors focus on the problem $$\begin{array}{clc}(\Phi_2)  & \min  &\|Y-X\beta\|_2 \\ &\text{s.t.}& \beta \in \{0,1\}^p,  \|\beta\|_0=k, 
\end{array}$$ and they prove that the optimal solution of this problem has approximately the same support as $\beta^*$ w.h.p., when $n>n^*$. Here and eslewhere $\|\beta\|_0$ is the number of non-zero coordinates of the vector $\beta$. Note that $\|\beta\|_0=k$ is not a convex constraint and thus $\Phi_2$ is not a priori an algorithmically tractable problem. The author study the geometry of the solutions space of $(\Phi_2)$ and show that when $n^*=2k \log p/\log \left(\frac{2k}{\sigma^2}+1\right)<n<ck \log p=cn_{\mathrm{alg}}$ for some sufficiently small $c>0$, a geometrical property called Overlap Gap Property (OGP) holds w.h.p. The OGP for this problem is the property that the exactly $k$-sparse $\beta$s that achieve near optimal cost for $\Phi_2$ split into two non-empty ``well-separated" categories; the ones whose support is close with the support of $\beta^*$ in the Hamming distance, and the ones whose support is far from the support of $\beta^*$ in the Hamming distance, creating a ``gap" for the vectors with supports in a ``intermediate" Hamming distance from the support of $\beta^*$. Similar forms of OGP are known in various random constraint satisfaction problems and statistical physics models such as the random $k$-SAT problem, proper coloring of a sparse random graph, the problem of finding a largest independent set of a random graph and many others. \cite{achlioptas2008algorithmic},\cite{AchlioptasCojaOghlanRicciTersenghi},\cite{montanari2011reconstruction},\cite{coja2011independent},\cite{gamarnik2014limits},\cite{gamarnik2014performance},\cite{rahman2014local},\cite{gamarnik2016finding}. For example, in a sparse random graph it has been proven that any two independent sets with size near optimality either have intersection at least of size $\tau_1>0$ or have intersection at most of size $\tau_2<\tau_1$, thus leading to a gap for the intermediate intersection sizes. The OGP for independent sets was used to establish fundamentals barriers for the so-called local algorithms for finding nearly largest independent sets in sparse random graphs \cite{gamarnik2014limits},\cite{gamarnik2014performance},\cite{rahman2014local}. Furthermore, it is a common feature of most of these problems that when the OGP ceases to hold, even very simple algorithms are able to succeed \cite{achlioptas2008algorithmic}. Motivated by these results the authors in \cite{gamarnikzadik} suggest the presence of OGP is the source of an algorithmic hardness for this high dimensional linear regression model in $n^* \leq n \leq c n_{\mathrm{alg}}$.


\section*{Results}
In this paper, we prove two sets of results supporting the conjecture that the OGP is the source of an algorithmic hardness in the regime $n^* \leq n \leq c n_{\mathrm{alg}}$.

(a) Our first set of results discusses the performance of the $\mathrm{LASSO}_\lambda$ for a wide range of tuning parameters $\lambda$.  We establish that if $n^* \leq n<cn_{\mathrm{alg}}$ for small enough $c>0$ and $\beta^*$ exactly $k$-sparse and binary, then for any $$\lambda \geq\sigma  \sqrt{\frac{1}{ k}} \exp \left(-\frac{k \log p}{5n}\right) $$ the optimal solution of $\mathrm{LASSO}_\lambda$ fails to $\ell_2$-stable recover $\beta^*$ w.h.p. Albeit our result does not apply for any arbitrarily small value of $\lambda>0$ our result covers certain arguably important choices of $\lambda$ in the literature of LASSO. More precisely, our results covers the theoretically successful choice of the tuning parameter $\lambda$ for LASSO when $n \geq Cn_{\mathrm{alg}}$ in \cite{bickel2009} which, as explained in the Introduction, shows that  $\mathrm{LASSO}_\lambda$ with $$\lambda=\lambda^*:=A \sigma \sqrt{ \log p/n}$$ for constant $A>2 \sqrt{2}$, $\ell_2$-stably recovers $\beta^*$  (see \cite[Chapter 11]{Book15}  for a simpler exposition). Indeed, since in our case $n<k \log p$ this choice of $\lambda$ satisfies trivially $$\lambda=\lambda^* \geq  A\sigma \sqrt{1 / k}>\sigma \sqrt{1 / k} $$ and therefore $\lambda=\lambda^* \geq \sigma  \sqrt{\frac{1}{ k}} \exp \left(-\frac{k \log p}{5n}\right) $.

An important feature of our result is that it is \textit{quantitative}, in the sense that it gives a lower bound of how far the optimal solution of $\mathrm{LASSO}_\lambda$ is from $\beta^*$ in the $\ell_2$ norm. In particular, we show that this lower bound depends exponentially on the ratio $k \log p/n$. Moreover, given the existing positive result of \cite{bickel2009} for LASSO, our result confirms that $n_{\mathrm{alg}}=k \log p$ is the \textit{exact order of necessary number of samples} for $\mathrm{LASSO}_\lambda$ to $\ell_2$-stably recover the ground truth vector $\beta^*$, when  $\lambda \geq \sigma \sqrt{1 / k} \exp \left(-k \log p/5n\right) $. Our result is therefore closed in spirit with the literature on LASSO for its performance for support recovery where the similar phase transition results are established by Wainwright in \cite{wainwright2009sharp}.

In the specific case $\beta^*$ is binary a natural modification of LASSO it is to add the box constraint $\beta \in [0,1]^p$ to the LASSO formulation. Such box constraints have been proven to improve the performance of LASSO in many cases, such as in signal processing applications \cite{Chris17}. We show that in our case,  our negative result for LASSO remains valid even with the box constraint. Specifically, let us focus for any $\lambda>0$ on \begin{align}\label{lassobox} \mathrm{LASSO(box)}_\lambda: \min_{\beta \in [0,1]^p} n^{-1}\|Y-X\beta\|^2_2+\lambda \|\beta\|_1.
\end{align}We show that if $n^* \leq n<cn_{\mathrm{alg}}$ for small enough $c>0$ and $\beta^*$ is an exactly $k$-sparse binary vector, for any $\lambda \geq \sigma  \sqrt{\frac{1}{ k}} \exp \left(-\frac{k \log p}{5n}\right)  $ the optimal solution of $\mathrm{LASSO(box)}_\lambda$ also fails to $\ell_2$-stably recover $\beta^*$ w.h.p.  

(b) Our second set of results concerns the Overlap Gap Property (OGP) and its implications. We first establish that if $n \geq Cn_{\mathrm{alg}}$ for some sufficiently large constant $C>0$, OGP indeed ceases to hold, proving the complementary part of a conjecture from the conference paper \cite{gamarnikzadik}. Furthermore, we prove that for these values of $n$ a very simple Local Search Algorithm exploits the ``smooth" geometrical structure of the solutions space which also leads to the absence of OGP and provably succeeds in both recovering both the support of $\beta^*$ and $\ell_2$ stable recovering vector of $\beta^*$. Notably this set of results applies for all sparsity levels $k \leq \frac{p}{3}$ and any $k$-sparse $\beta$ with $|\beta|_{\min} \geq 1$.

\subsection*{Beyond the Gaussian Assumption on $X$}

Our results on high-dimensional linear regression when $n \geq Cn_{\rm alg}$ are established under the idealized assumption on $X$ having iid $\mathcal{N}\left(0,1\right)$ entries.  Such an assumption allows a broader technical development and the establishment of tight statistical guarantees. Yet, naturally, the question is whether our structural results generalize beyond the present setting.

Regarding our first set of results on the performance of $\mathrm{LASSO}_\lambda$, the core technical tool used is the probabilistic result  \cite[Theorem 3.1]{gamarnikzadik}. The result is established for $X$ having iid $\mathcal{N}\left(0,1\right)$ entries, but is expected to a setting where $X$ has iid rows but with non-Gaussian and weakly dependent entries (see the Introduction of \cite{gamarnikzadik}).

Regarding our second set of results on the absence of OGP and analysis of the performance of LSA, we expect our results to generalize in a straightforward way to the case where $X$ with iid subGaussian entries (that is iid entries with bounded subGaussian norm), potentially tolerating on top of this weak dependence between the row entries. The reason is that our proof techniques are based on two key results; that Restricted Isometry Property (RIP) holds for the matrix $X$ and that the Hanson-Wright concentration inequality can be applied for quadratic forms defined by arbitrary matrix $A$ and random vectors of the form  $Xv$ for $v \in \mathbb{R}^{p}$ with $\|v\|_2=1$. Both these results are known to hold under the assumption of arbitrary iid subGaussian entries of $X$ and $W$ \cite{hanson1971, Baraniuk2008}. For this reason we consider our second set of result to generalize in a straightforward manner to the case of iid subGaussian entries. Furthermore, we consider the generalization to row entries that are weakly dependent potentially true for both RIP and Hanson-Wright inequality, yet we are not aware of such a result and such a pursuit would requires further technical work; we leave this as a interesting direction of future work. 
\section*{Notation}
 
For a matrix $A \in \mathbb{R}^{n \times n}$ we use its operator norm $\|A\|:= \max_{x \not = 0 }\frac{\|Ax\|_2}{\|x\|_2}$, and its Frobenius norm $\|A\|_F:=\left( \sum_{i,j} |a_{i,j}|^2 \right) ^{\frac{1}{2}}$. If $n,d \in \mathbb{N}$ and $A \in \mathbb{R}^{d \times p}$ by $A_i,i=1,2,\ldots,p$ we refer to the $p$ columns of $A$. For $p \in (0,\infty), d \in \mathbb{N}$ and a vector $x \in \mathbb{R}^d$ we use its $\mathcal{L}_p$-norm, $\|x\|_p:=\left( \sum_{i=1}^p |x_i|^p \right)^{\frac{1}{p}}$. For $p=\infty$ we use its infinity norm $\|x\|_{\infty} := \max_{i=1,\ldots,d} |x_i|$ and for $p=0$, its $0$-norm $\|x\|_0=|\{i \in \{1,2,\ldots, d\} | x_i \not =0 \}|$. We say that $x$ is $k$-sparse if $\|x\|_0 \leq k$ and exactly $k$-sparse if $\|x\|_0=k$.  We also define the support of $x$, $\mathrm{Support}\left(x\right):=\{i \in \{1,2,\ldots, d\} | x_i \not =0\}$. For $k \in \mathbb{Z}_{>0}$ we adopt the notation $[k]:=\{1,2,\ldots,k\}$. Finally with the real function $\log : \mathbb{R}_{>0} \rightarrow \mathbb{R} $ we refer everywhere to the natural logarithm.

 \section*{Structure of the Paper}
The remained of the paper is structured as follows.
The description of the model, assumptions and main results are found in the next section. Section 3 is devoted to the proof of the failure of the LASSO in the regime $n^* \leq n \leq c n_{\mathrm{alg}}$. Section 4 is devoted to the proof of the results related to the Overlap Gap Property and the success of the local search algorithm in the regime $n \geq C n_{\mathrm{alg}}$.

\section{Main Results and Proof Ideas}

We remind our model for convenience. Let $n,p,k \in \mathbb{N}_{>0}$ and $\sigma^2>0$. Let also $X \in \mathbb{R}^{n \times p}$ be an $n \times p$ matrix with i.i.d. $N(0,1)$ entries and $W \in \mathbb{R}^n$ be an $n \times 1$ vector with i.i.d. $N(0,\sigma^2)$ entries. We assume that $X,W$ are mutually independent. Let $\beta^*$ be a $p \times 1$ exactly $k$-sparse vector in $\mathbb{R}^p$, that is a $p$-dimensional vector with exactly $k$-non zero coordinates, and let $Y \in \mathbb{R}^n$ be an $n \times 1$ vector given by $Y=X \beta^*+W$. Assuming the knowledge of $(Y,X)$ and of the values of the parameters $n,p,k,\sigma^2$, we study the question of efficiently recovering the ground truth $\beta^*$ either by approximating its support or by $\ell_2$-stable recovering the vector itself or both.

 We are interested in the high dimensional regime where $p$, the number of features, exceeds $n$, the sample size, and both diverge to infinity. Various assumptions on $n,p,k,\sigma^2$ are required for technical reasons and some of the assumptions may vary from theorem to theorem, but they are always explicitly stated in the statements. Everywhere we assume that $k<n$, that is the number of samples is strictly larger than the sparsity level. The results hold in the ``with high probability" (w.h.p.) sense as $k,n,p$ diverge to infinity, but for concreteness we will usually explicitly say that $k$ diverges to infinity. This automatically implies the same for $p$ and $n$ since our assumptions always imply $k<n$ and clearly $k<p$.

\subsection{Below $n_{\mathrm{alg}}$ samples: Failure of the LASSO}

For this subsection we focus on the case where $\beta^*$ is exactly $k$-sparse and binary. In that case as stated in the introduction, if $k \leq \exp \left( C \sqrt{ \log p} \right)$ for some $C>0$, then $\beta^*$ can be exactly recovered with  order $n^*= 2k \log p/\log \left( \frac{2k}{\sigma^2}+1\right)$. In particular as $k/\sigma^2$ grows it implies an $\ell_2$-stable recovery guarantee with  $n \ll n_{\mathrm{alg}}$ samples. In this subsection we discuss the performance of LASSO in this regime. We show that when $n/n_{\mathrm{alg}}$ is sufficiently small, for a wide range of tuning parameters $\lambda$  $\mathrm{LASSO}_\lambda$, \textit{fails to $\ell_2$-stably recover} the ground truth vector $\beta^*$. Our result applies for $\mathrm{LASSO}_\lambda$ \textit{ with and without} box constraints.

Furthermore, our result applies for arbitrary choice of the tuning parameter $\lambda$ as long as \begin{align} \label{LambdaC}
\lambda \geq \frac{\sigma}{\sqrt{k}} \exp \left(-\frac{k \log p}{5n}\right).
\end{align} Note that this range of possible $\lambda$'s \textit{include the standard optimal choice in the literature} of the tuning parameter $\lambda=A\sigma \sqrt{   \log p/n}$ for constant $A>2\sqrt{2}$ \cite{bickel2009}, \cite[Chapter 11]{Book15} as in our regime we assume $n \leq k \log p$, hence for this choice of $\lambda$, it holds $\lambda \geq \sigma \sqrt{1/k}$ and in particular (\ref{LambdaC}) is satisfied.

We present now the result.

\begin{theorem}\label{LASSO}
Suppose that $\hat{C}\sigma^2 \leq k \leq \min\{1,\sigma^2\}\exp \left( C \sqrt{\log p} \right)$ for some constants $C,\hat{C}>0$. Then, there exists a constant $c>0$ such that the following holds. If $n^* \leq n \leq c n_{\mathrm{alg}}$, $\beta^* \in \mathbb{R}^p$ is an exactly $k$-sparse binary vector, arbitrary choice of $\lambda$ satisfying (\ref{LambdaC})  and $\hat{\beta}_{\mathrm{LASSO},\lambda}$, $\hat{\beta}_{\mathrm{LASSO}(\mathrm{box}),\lambda}$ are the optimal solutions of the formulations $\mathrm{LASSO}_{\lambda}$ and $\mathrm{LASSO(box)}_{\lambda}$ respectively, then

$$ \min \left(\|\hat{\beta}_{\mathrm{LASSO,\lambda}}-\beta^*\|_2,\|\hat{\beta}_{\mathrm{LASSO}(\mathrm{box}),\lambda}-\beta^*\|_2\right) \geq \exp\left(\frac{k \log p}{5n} \right) \sigma,$$

w.h.p. as $k \rightarrow +\infty$.

\end{theorem}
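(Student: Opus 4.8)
The plan is to prove both failures at once, at the level of objective values, so that the convexity of the two programs plays no essential role. Write $F_\lambda(\beta)=n^{-1}\|Y-X\beta\|_2^2+\lambda\|\beta\|_1$ for the common objective and set $R:=\exp(k\log p/5n)\,\sigma$. For each program I want to rule out a minimizer inside the ball $B(\beta^*,R)$, and the natural route is to exhibit a gap: a uniform lower bound $\inf_{B(\beta^*,R)}F_\lambda \ge V$ together with a point whose objective is strictly below $V$. Since the infimum over $[0,1]^p\cap B(\beta^*,R)$ is at least the infimum over $B(\beta^*,R)$, a single lower bound serves both formulations; so the work is to produce (i) this lower bound on the ball and (ii) a competitor of smaller value, with the competitor chosen inside $[0,1]^p$ so that it is feasible for $\mathrm{LASSO(box)}_\lambda$ as well.

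For (i) the engine is the probabilistic estimate \cite[Theorem 3.1]{gamarnikzadik}. For $\beta\in B(\beta^*,R)$ with $\Delta=\beta-\beta^*$ one has $n^{-1}\|Y-X\beta\|_2^2=n^{-1}\|W\|_2^2-2n^{-1}\langle W,X\Delta\rangle+n^{-1}\|X\Delta\|_2^2$, and I would use the RIP and Hanson--Wright style control that the theorem supplies to dominate the quadratic form and the noise cross term \emph{uniformly} over the ball, thereby pinning $F_\lambda$ from below on $B(\beta^*,R)$. For (ii), because $p>n$ the kernel of $X$ is nontrivial, so I can move $\beta^*$ along $\ker X$ to decrease the $\ell_1$ penalty while leaving the residual at $n^{-1}\|W\|_2^2\approx\sigma^2$; this is the noisy counterpart of the basis-pursuit/null-space failure that is available once $n$ lies well below $2n_{\mathrm{alg}}$, and the remaining task is to realize the competitor (or a close feasible surrogate) inside $[0,1]^p$. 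The exponential scale of $R$ should then fall out of the same large-deviation balance that defines $n^*$: weighing the $\exp(\Theta(k\log p))$ candidate $k$-sparse supports against the $\exp(-n\,I(t))$ cost of pushing a residual to level $t$ forces the relevant separation to be of order $\sigma\exp(\Theta(k\log p/n))$, with the constant $1/5$ absorbing the slack in the estimate and the hypothesis $n\le c\,n_{\mathrm{alg}}$.

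The hard part, and where I expect the real difficulty to lie, is that the LASSO optimizer is a general dense, non-binary vector, whereas \cite[Theorem 3.1]{gamarnikzadik} is a statement about $k$-sparse binary vectors and about the Gram structure of $X$; bridging this gap means controlling \emph{arbitrary} perturbations $\Delta$ of bounded $\ell_2$ norm rather than sparse ones. This bites in two places. First, the cross term $n^{-1}\langle W,X\Delta\rangle$ cannot be routed through $\|\Delta\|_1$ without an unacceptable loss and must instead be bounded by an $\ell_2$/operator-norm estimate that holds uniformly over the whole ball. Second, the interaction of the penalty with kernel directions is delicate: a generic $\Delta$ spread over the $p-k$ off-support coordinates tends to \emph{raise} $\|\beta\|_1$, so one must verify that the competitor's $\ell_1$ saving genuinely dominates at the scale $R$, and that this saving can be certified for a vector that remains in $[0,1]^p$ so the $\mathrm{LASSO(box)}_\lambda$ case closes by the identical comparison. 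Once these uniform estimates are established, the lower bound on $B(\beta^*,R)$ and the competitor bound are compared to yield the stated inequality for both programs simultaneously.
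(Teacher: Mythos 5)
Your overall skeleton---rule out minimizers near $\beta^*$ by an objective comparison against a competitor chosen inside $[0,1]^p$ so that one argument serves both $\mathrm{LASSO}_\lambda$ and $\mathrm{LASSO(box)}_\lambda$---is indeed the paper's skeleton, and you correctly identify \cite[Theorem 3.1]{gamarnikzadik} as the probabilistic engine setting the $\exp(k\log p/5n)$ scale. But both of your load-bearing steps have genuine gaps. First, the uniform lower bound on $F_\lambda$ over $B(\beta^*,R)$ via ``RIP and Hanson--Wright style control'' is unavailable in this regime: RIP at sparsity $\Theta(k)$ requires $n\geq Ck\log p$, whereas here $n\leq c\,k\log p$ by hypothesis (the paper invokes RIP and Hanson--Wright only in Section 4, where $n\geq Cn_{\mathrm{alg}}$), and \cite[Theorem 3.1]{gamarnikzadik} is an existence statement about sparse binary fits, not a concentration tool for quadratic forms over a ball. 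It is also unnecessary: the paper simply discards the residual term and lower-bounds the objective on the ball by the penalty alone, since by Lemma \ref{lem:simple} (Cauchy--Schwarz restricted to the $k$ support coordinates of the binary $\beta^*$) any $\beta$ with $\|\beta-\beta^*\|_2< C_1\sigma$ must have $\|\beta\|_1> k-C_1\sigma\sqrt{k}$. This deterministic step dissolves exactly the difficulties you flag (dense $\Delta$, the cross term $\langle W,X\Delta\rangle$): none of them ever needs to be controlled. The role of condition (\ref{LambdaC}) then becomes transparent---it is precisely what makes the penalty gap $\lambda C_1\sigma\sqrt{k}$ dominate the competitor's residual $\sigma^2$ in the rearranged optimality inequality.

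Second, your competitor is unsubstantiated: moving $\beta^*$ along $\ker X$ keeps the residual at $\|W\|_2^2$, but you give no mechanism showing the $\ell_1$ saving reaches the required exponential size $2C_1\sigma\sqrt{k}$ with $C_1=\exp(k\log p/5n)$, nor that this can be realized inside $[0,1]^p$; generic kernel directions spread mass off-support and \emph{increase} $\|\beta\|_1$, and quantifying the best achievable saving is itself a nontrivial null-space-property statement you would have to prove from scratch. The paper's construction (Lemma \ref{claimV}) is different and is where \cite[Theorem 3.1]{gamarnikzadik} actually enters: take $\alpha=\lambda\beta^*+(1-\lambda)\beta$ with $1-\lambda=4C_1\sigma/\sqrt{k}$, where $\beta$ is an exactly $k/2$-sparse \emph{binary} vector, supported off the support of $\beta^*$, that fits the effective pure-noise vector $Y'=X\beta^*+(1-\lambda)^{-1}W$ to within the exponentially small residual the theorem guarantees. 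Shrinking $\beta^*$ saves $(1-\lambda)k=4C_1\sigma\sqrt{k}$ in $\ell_1$ norm, adding $(1-\lambda)\beta$ costs back half of that, giving $\|\alpha\|_1=k-2C_1\sigma\sqrt{k}$ while $n^{-1/2}\|Y-X\alpha\|_2\leq\sigma$, and $\alpha\in[0,1]^p$ by construction. So the engine you named is the right one, but it must be routed through this convex-combination trick rather than through kernel geometry or ball concentration; as proposed, neither your lower bound nor your competitor can be completed.
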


Note that $\ell_2$ stable recovery means finding a vector $\beta$ such that $\|\beta-\beta^*\|_2 \leq C'\sigma$ for some constant $C'>0$. The above theorem establishes that in the case of an exactly $k$-sparse and binary $\beta^*$, when the samples size is less than $k \log p$ both the optimal solutions of $\mathrm{LASSO}_{\lambda}$ and $\mathrm{LASSO(box)}_{\lambda}$ for any $\lambda$ satisfying (\ref{LambdaC}) fails to $\ell_2$-stable recover the ground truth vector $\beta^*$ by a multiplicative factor which is exponential on the ratio $\frac{k \log p}{n}$.  In particular, coupled with the result from \cite{bickel2009} this shows that $k \log p$ is the necessary and sufficient order of samples for which LASSO can $\ell_2$-stable recover $\beta^*$ for some $\lambda>0$ satisfying (\ref{LambdaC}).

\subsection{Above $n_{\mathrm{alg}}$ samples: The Absence of OGP and the success of the Local Search Algorithm}
In this setting we assume that $\beta$ is $k$-sparse, but not necessarily exaclty $k$-sparse. We establish the absence of the OGP in the  case $n \geq Cn_{\mathrm{alg}}=C k \log p$ for sufficiently large $C>0$, w.h.p. For the same values of $n$ we also propose a very simple Local Search Algorithm (LSA) for recovering the $k$-sparse $\beta^*$ which provably succeeds w.h.p. In fact our results for OGP is an easy consequence of the success of LSA.

\subsection*{The Absence of OGP}

We now state the definition of Overlap Gap Property (OGP) which generalizes the definition used in \cite{gamarnikzadik} where it focuses on the binary case for $\beta^*$.

\begin{definition}
Fix an instance of $X,W$. The regression problem defined by $(X,W,\beta^*)$ where a vector $\beta^*$ is an exactly $k$-sparse vector with $|\beta^*|_{\text{min}} \geq 1$ satisfies the Overlap Gap Property (OGP) if there exists $r=r_{n,p,k,\sigma^2}>0$ and constants $ 0<\zeta_1<\zeta_2<1$ such that

\begin{itemize}
\item[(1)] $\|Y-X\beta^*\|_2 <r$,

\item[(2)] There exists a $k$-sparse vector $\beta$ with $\mathrm{Support}(\beta) \cap \mathrm{Support}(\beta^*)=\emptyset$ and $\|Y-X\beta\|_2 <r$, and

\item[(3)] If a $k$-sparse vector $\beta$ satisfies $\|Y-X\beta\|_2<r$ then either 
$$|\mathrm{Support}(\beta) \cap \mathrm{Support}(\beta^*)|<\zeta_1k$$
or
$$|\mathrm{Support}(\beta) \cap \mathrm{Support}(\beta^*)|>\zeta_2k.$$
 
\end{itemize} 
\end{definition} The OGP has a natural interpretation. It states that the $k$-sparse $\beta$s which achieve near optimal cost for the objective value $\|Y-X\beta\|_2$ split into two non-empty ``well-separated" regions; the ones whose support is close with the support of $\beta^*$ in the Hamming distance sense, and the ones whose support is far from the support of $\beta^*$ in the Hamming distance sense, creating a ``gap" for the vectors with supports in a ``intermediate" Hamming distance.

In \cite{gamarnikzadik} the authors prove that under the assumption $\frac{1}{5}\sigma^2 \leq k \leq \min\{1,\sigma^2\}\exp \left( C \sqrt{\log p} \right)$ for some constant $C>0$ if $n$ satisfies $n^*<n \leq c k \log p$, for some sufficiently small constant $c>0$, then the OGP restricted for binary vectors holds for some $r>0$ and $\zeta_1=\frac{1}{5}$ and $\zeta_2=\frac{1}{4}$. Details can be found in the paper. As mentioned though in the introduction, it is conjectured in \cite{gamarnikzadik} that OGP will not hold when $n \geq C k \log p$ for some constant $C>0$, which is the regime for $n$ where efficient algorithms, such as LASSO, have been proven to work. We confirm this conjecture in the theorem below.

\begin{theorem}\label{OGP}
There exists $c,C>0$ such that if $\sigma^2 \leq c \min\{k,\frac{\log p}{\log \log p} \}$, $n \geq C n_{\mathrm{alg}}$ the following holds. If the $\beta^*$ is exactly $k$-sparse and satisfies $|\beta^*|_{\min} \geq 1$ then the regression problem $\left(X,W,\beta^*\right)$ does not satisfy the OGP w.h.p. as $k \rightarrow +\infty$.

\end{theorem}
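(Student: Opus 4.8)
The plan is to establish the negation of the OGP directly: I will show that, w.h.p.\ over $X,W$, no triple $(r,\zeta_1,\zeta_2)$ can satisfy conditions (1)--(3) of the definition simultaneously. Write $S^*=\mathrm{Support}(\beta^*)$ and, for a $k$-sparse $\beta$, call $m=|\mathrm{Support}(\beta)\cap S^*|$ its overlap. The heart of the argument is a sharp estimate for the minimal cost achievable at a prescribed overlap. For a fixed $T\subseteq S^*$ with $|T|=m$ and a fixed index set $U$ of size $k-m$ outside $S^*$, the best $\beta$ supported on $T\cup U$ that matches $\beta^*$ on $T$ has residual $Y-X\beta=X_{S^*\setminus T}\beta^*_{S^*\setminus T}+W-X_U u$, and minimizing over the free values $u$ yields the distance from $Z:=X_{S^*\setminus T}\beta^*_{S^*\setminus T}+W$ to the span of $X_U$. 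Since $X_U$ is Gaussian and independent of $Z$, the RIP gives $\|Z\|_2^2=(1+o(1))\,n(\|\beta^*_{S^*\setminus T}\|_2^2+\sigma^2)$, and the Hanson--Wright inequality controls the projection onto the $(k-m)$-dimensional span, which removes only a $(1\pm o(1))(k-m)/n=o(1)$ fraction because $k\ll n$. Hence the optimal cost at overlap $m$ concentrates at $(1+o(1))\sqrt{n(\|\beta^*_{S^*\setminus T}\|_2^2+\sigma^2)}$.

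The only delicate point, which I expect to be the bulk of the technical work, is \emph{uniformity}: I need the RIP for all $2k$-sparse directions and the Hanson--Wright projection estimate to hold simultaneously over all relevant pairs $(T,U)$. RIP for Gaussian $X$ holds for $2k$-sparse vectors once $n\gtrsim k\log(p/k)$ \cite{Baraniuk2008}, and the Hanson--Wright tail for a fixed pair decays like $\exp(-c\,n)$; a union bound over the at most $\binom{p}{k}^2\le \exp(2k\log p)$ choices of $(T,U)$ is absorbed precisely because $n\ge C n_{\mathrm{alg}}=Ck\log p$ with $C$ large. This is exactly where the hypothesis $n\ge Cn_{\mathrm{alg}}$ is used. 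The assumption $\sigma^2\le c\min\{k,\log p/\log\log p\}$ guarantees that the noise scale $\sigma^2$ is of lower order than the signal energy $\|\beta^*_{S^*\setminus T}\|_2^2\ge k-m$ (so the signal, not the noise, sets the scale of the cost) and keeps the residual fluctuations below the constant-factor gap exploited below.

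Granting the estimate, the negation of OGP follows from a dichotomy on $r$. Let $D_0^{\min}$ be the minimal cost over $k$-sparse $\beta$ disjoint from $\beta^*$ (overlap $0$); the uniform lower bound and $|\beta^*|_{\min}\ge 1$ give $D_0^{\min}\ge (1-o(1))\sqrt{n}\,\|\beta^*\|_2\ge(1-o(1))\sqrt{nk}$. If $r\le D_0^{\min}$, then condition (2) fails outright. If instead $r>D_0^{\min}$, I produce a near-optimal vector of intermediate overlap, contradicting (3): fix constants $0<\zeta_1<\zeta_2<1$, pick an integer $m^*\in[\zeta_1 k,\zeta_2 k]$ (nonempty for large $k$), let $T$ consist of the $m^*$ largest-magnitude coordinates of $\beta^*$, and take the cost-optimal $\beta$ supported on $T$ together with a disjoint $U$. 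Keeping the largest coordinates gives $\|\beta^*_{S^*\setminus T}\|_2^2\le \frac{k-m^*}{k}\|\beta^*\|_2^2\le(1-\zeta_1)\|\beta^*\|_2^2$, so by the upper half of the estimate this $\beta$ has overlap $m^*\in[\zeta_1k,\zeta_2k]$ and cost at most $(1+o(1))\sqrt{1-\zeta_1}\,\sqrt{n}\,\|\beta^*\|_2<(1-o(1))\sqrt{n}\,\|\beta^*\|_2\le D_0^{\min}<r$, where the strict middle inequality uses that $\sqrt{1-\zeta_1}<1$ is a fixed constant beating the vanishing fluctuations. Thus this $\beta$ has cost $<r$ yet intermediate overlap, violating (3). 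In either case some condition fails, so the OGP does not hold w.h.p.

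Finally, this is the promised ``easy consequence of the success of LSA'': the same overlap-indexed cost estimate shows that from any $k$-sparse $\beta$ with overlap $m<k$ one can strictly decrease the cost by a single support swap that raises the overlap to $m+1$, which is exactly the improvement step analyzed for the Local Search Algorithm. Applying these swaps from a disjoint near-optimal $\beta$ (guaranteed by (2)) yields a path $\beta^{(0)},\dots,\beta^{(k)}=\beta^*$ of monotonically decreasing cost and increasing overlap, all of cost below $r$; its intermediate members again populate the forbidden band $[\zeta_1k,\zeta_2k]$ and contradict (3). I would present whichever of these two equivalent routes is cleaner once the uniform cost estimate is in hand.
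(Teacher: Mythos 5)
There is a genuine gap, and it sits exactly where you wave at ``uniformity.'' With $n \geq C n_{\mathrm{alg}}$ for a \emph{fixed} constant $C$, your union bound over the $\exp(\Theta(k\log p))$ choices of support pairs $(T,U)$ forces the two-sided concentration error in your overlap-indexed cost estimate to be a constant $\epsilon=\epsilon(C)>0$, not $o(1)$. Your dichotomy then requires $(1+\epsilon)\sqrt{n\left((1-\zeta_1)\|\beta^*\|_2^2+\sigma^2\right)} < (1-\epsilon)\sqrt{n\left(\|\beta^*\|_2^2+\sigma^2\right)}$, which only holds when $\zeta_1 \gtrsim \epsilon(C)$. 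But negating the OGP means defeating \emph{every} admissible choice of constants $0<\zeta_1<\zeta_2<1$ and every $r$ --- and the adversary may take $r$ barely above $D_0^{\min}$, so your intermediate witness must beat $D_0^{\min}$ itself, not merely $r$. Since the theorem fixes $c,C$ before $\zeta_1$ is quantified, your argument leaves the OGP unrefuted for all $\zeta_1$ below the concentration slack, e.g.\ $\zeta_1=\zeta_2/2=\epsilon(C)/10$. (A smaller issue: in the lower bound the values of $\beta$ on $T$ are also free, so you must project out the full $k$-dimensional span of $X_{T\cup U}$, not the $(k-m)$-dimensional span of $X_U$; this is harmless but your sketch states it incorrectly.) Notice also that your route never uses the hypothesis $\sigma^2 \leq c\,\log p/\log\log p$, which should be a warning sign.

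Your fallback --- a monotone improving-swap path from the disjoint near-optimizer of condition (2) up to $\beta^*$, whose intermediate members populate the band $[\zeta_1 k,\zeta_2 k]$ --- is quantifier-clean and is essentially the paper's actual argument: the paper shows (Proposition \ref{LocalMin}) that OGP deterministically forces a non-trivial local minimum of $(\tilde{\Phi}_2)$, and then proves (Theorem \ref{nolocal}, via the LSA analysis of Theorem \ref{LSA}) that w.h.p.\ no such local minimum exists, uniformly in $(r,\zeta_1,\zeta_2)$. But you assert that the existence of an improving swap at every non-ground-truth support ``follows from the same overlap-indexed cost estimate.'' It does not: knowing the \emph{minimal} cost at each overlap says nothing about whether an \emph{arbitrary} vector at overlap $m$ --- possibly far from optimal on its own support --- admits a single improving swap. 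Establishing that, uniformly over all supports and swap candidates, is the hard core of the paper: the $\alpha$-DLM machinery of Propositions \ref{prop1} and \ref{prop2}, with Hanson--Wright applied over packing nets of size $\left(12r/\delta\right)^{2k+1}$ against tails $\exp\left(-c_0 n \min\{1,m/r^2\}\right)$, and it is precisely there that $r^2 \asymp \min\{\log p/\log\log p,\,k\}$, hence the noise hypothesis, is needed. So as written, your first route rests on an estimate that is provably insufficient for small $\zeta_1$, and your second route presupposes, without proof, the theorem's genuinely difficult ingredient.
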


We now give some intuition of how this result is derived. The proof is based on a lemma on the ``local" behavior of the $k$-sparse $\beta$s with respect to the optimization problem $$\begin{array}{clc}(\tilde{\Phi}_2)  & \min  &\|Y-X\beta\|_2 \\ &\text{s.t.}& \|\beta\|_0 \leq k.
\end{array}$$ We first give a natural definition of what a non-trivial local minimum is for $\tilde{\Phi}_2$.

\begin{definition}
We define a $k$-sparse $\beta$ to be a \textbf{non-trivial local minimum} for $\tilde{\Phi}_2$ if 
\begin{itemize} 
\item $\mathrm{Support}\left(\beta\right) \not = \mathrm{Support}\left(\beta^*\right)$, and 
\item if a $k$-sparse $\beta_1$ satisfies \begin{equation*} \max \{ |\mathrm{Support}\left(\beta\right) \setminus \mathrm{Support}\left(\beta_1\right)|, |\mathrm{Support}\left(\beta_1\right) \setminus \mathrm{Support}\left(\beta\right)|\} \leq 1, \end{equation*} it must also satisfy $$\|Y-X\beta_1\|_2 \geq \|Y-X\beta\|_2.$$
\end{itemize}
\end{definition}We continue with the observation that the presence of OGP deterministicaly implies the existence of a non-trivial local minimum for the problem $\tilde{\Phi}_2$.
\begin{proposition}\label{LocalMin}
Assume for some instance of $X,W$ the regression problem $(X,W,\beta^*)$ satisfies the Overlap Gap Property. Then for this instance of $X,W$ there exists at least one non-trivial local minimum for $\tilde{\Phi}_2$.
\end{proposition}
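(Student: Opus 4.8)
The plan is to exhibit an explicit non-trivial local minimum, namely the minimizer of the objective over the ``far'' region singled out by the first branch of the OGP. Concretely, let $\zeta_1,\zeta_2,r$ be the constants and radius supplied by the OGP, and define the far region $\mathcal{F}$ to be the set of $k$-sparse vectors $\beta$ with $|\mathrm{Support}(\beta)\cap\mathrm{Support}(\beta^*)|<\zeta_1 k$. Since for each fixed support set $S$ of size at most $k$ the value $\min_{\mathrm{Support}(\beta)\subseteq S}\|Y-X\beta\|_2$ is the distance from $Y$ to the column span of $X_S$ and is attained, and since there are only finitely many such $S$, the infimum $\min_{\beta\in\mathcal{F}}\|Y-X\beta\|_2$ is attained at some $\hat{\beta}\in\mathcal{F}$. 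I claim this $\hat{\beta}$ is the desired non-trivial local minimum for $\tilde{\Phi}_2$. Two facts will be used throughout: by OGP~(2) the region $\mathcal{F}$ contains a vector of cost $<r$ (the one whose support is disjoint from $\mathrm{Support}(\beta^*)$, so that its overlap is $0<\zeta_1 k$), whence $\|Y-X\hat{\beta}\|_2<r$; and since $\zeta_1<1$, every $\beta\in\mathcal{F}$ has overlap $<\zeta_1 k<k$, so its support cannot coincide with the size-$k$ set $\mathrm{Support}(\beta^*)$. The latter immediately gives the first bullet of the definition, $\mathrm{Support}(\hat{\beta})\neq\mathrm{Support}(\beta^*)$.

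For the local-minimality bullet, the key observation is combinatorial: if $\beta_1$ is obtained from $\hat{\beta}$ by a single-coordinate move, i.e.\ $\max\{|\mathrm{Support}(\hat{\beta})\setminus\mathrm{Support}(\beta_1)|,\,|\mathrm{Support}(\beta_1)\setminus\mathrm{Support}(\hat{\beta})|\}\le 1$, then the overlap $|\mathrm{Support}(\beta_1)\cap\mathrm{Support}(\beta^*)|$ differs from $|\mathrm{Support}(\hat{\beta})\cap\mathrm{Support}(\beta^*)|$ by at most one, since one can add at most one index to the support and delete at most one. I would then split into two cases. If $\beta_1$ still lies in $\mathcal{F}$, then $\|Y-X\beta_1\|_2\ge\|Y-X\hat{\beta}\|_2$ directly by the minimality defining $\hat{\beta}$. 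Otherwise $\beta_1\notin\mathcal{F}$, so its overlap is at least $\zeta_1 k$; but being within one of $\hat{\beta}$'s overlap (which is an integer $<\zeta_1 k$), it is strictly less than $\zeta_1 k+1$, and for $k\ge 1/(\zeta_2-\zeta_1)$ this is at most $\zeta_2 k$. Hence $\beta_1$ lands in the forbidden ``gap'' $[\zeta_1 k,\zeta_2 k]$, and OGP~(3) forces $\|Y-X\beta_1\|_2\ge r>\|Y-X\hat{\beta}\|_2$. In either case $\|Y-X\beta_1\|_2\ge\|Y-X\hat{\beta}\|_2$, which verifies the second bullet and completes the argument.

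The one point requiring care, and the only place where the scale of $k$ enters, is ensuring that a single-coordinate move starting inside $\mathcal{F}$ cannot leap all the way into the ``near'' region $\{\text{overlap}>\zeta_2 k\}$: this is exactly where I use that a single move perturbs the overlap by at most one while the gap has width $(\zeta_2-\zeta_1)k\ge 1$, so the barrier simply cannot be crossed in one step. This is harmless in the present context, since the OGP constants $\zeta_1<\zeta_2$ are fixed and $k\to+\infty$. Apart from this bookkeeping and the routine observation that the restricted least-squares minimum is attained, the proof is purely deterministic and draws only on the three defining properties of the OGP.
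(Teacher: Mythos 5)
Your proof is correct and follows essentially the same route as the paper: take the minimizer of $\|Y-X\beta\|_2$ over the low-overlap region (which exists since there are finitely many supports), use OGP~(2) to show its cost is below $r$, and use OGP~(3) to rule out any improving single-swap neighbor whose overlap lands in the gap $[\zeta_1 k,\zeta_2 k]$. If anything, you are slightly more careful than the paper, which leaves implicit the bookkeeping condition $(\zeta_2-\zeta_1)k\ge 1$ ensuring a one-coordinate move cannot jump over the gap; this is harmless since $k\to+\infty$ with $\zeta_1,\zeta_2$ fixed.
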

\begin{proof}
Assume that OGP holds for some values $r,\zeta_1,\zeta_2$. We choose $\beta_1$ the $k$-sparse vector $\beta$ that minimizes $\|Y-X\beta\|_2$ under the condition $|\mathrm{Support}(\beta) \cap \mathrm{Support}(\beta^*)| \leq \zeta_1 k$. The existence of $\beta_1$ is guaranteed as the space of $k$-sparse vectors with $|\mathrm{Support}(\beta) \cap \mathrm{Support}(\beta^*)|  \leq \zeta_1 k$ is closed under the Euclidean metric.

We claim this is a non-trivial local minimum. Notice that it suffices to prove that $\beta_1$ minimizes also $\|Y-X\beta\|_2$ under the more relaxed condition $|\mathrm{Support}(\beta) \cap \mathrm{Support}(\beta^*)| < \zeta_2 k$. Indeed then since $\zeta_1k<\zeta_2k$, $\beta_1$ will be the minimum over a region that contains its 2-neighborhood in the Hamming distance and as clearly the support of $\beta_1$ is not equal to the support of $\beta^*$ we would be done.

Now to prove the claim consider a $\beta$ with $\zeta_1k<|\mathrm{Support}(\beta) \cap \mathrm{Support}(\beta^*)|< \zeta_2 k$. By the Overlap Gap Property we know that it must hold $\|Y-X\beta\|_2>r$. Furthermore again by the Overlap Gap Property we know there is a $\beta'$ with $|\mathrm{Support}(\beta') \cap \mathrm{Support}(\beta^*)|=0 < \zeta_1 k$ for which it holds $\|Y-X\beta'\|_2<r$. But by the definition of $\beta_1$ it must also hold $\|Y-X\beta_1\|_2 \leq \|Y-X\beta'\|_2<r$ which combined with $\|Y-X\beta\|_2>r$ implies $\|Y-X\beta_1\|_2<\|Y-X\beta\|_2$. Since the $\beta$ was arbitrary with $\zeta_1k<|\mathrm{Support}(\beta) \cap \mathrm{Support}(\beta^*)|< \zeta_2 k$ the proof of the Proposition is complete.
\end{proof}

Now in light of the Proposition above, we know that a way to negate OGP is to prove the absence of non-trivial local minima for $\tilde{\Phi}_2$. We prove that indeed if $n \geq C k \log p$ for some universal $C>0$ our regression model does not have non-trivial local minima for $\tilde{\Phi}_2$ w.h.p. and in particular OGP does not hold in this regime w.h.p., as claimed. We state this as a separate result as it could be of independent interest.

\begin{theorem}\label{nolocal}
There exists $c,C>0$ such that if $\sigma^2 \leq c \min\{k,\frac{\log p}{\log \log p} \}$, $n \geq C n_{\mathrm{alg}}$ such that the following is true. If the $\beta^*$ is exactly $k$-sparse and satisfies $|\beta^*|_{\min} \geq 1$ then the optimization problem $(\tilde{\Phi}_2)$ has no non-trivial local minima w.h.p. as $k \rightarrow +\infty$.
\end{theorem}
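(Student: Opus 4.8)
My plan is to prove the slightly stronger combinatorial statement that, w.h.p., every $k$-sparse support $S\neq S^{*}:=\mathrm{Support}(\beta^{*})$ admits a \emph{local move} — an insertion, a deletion, or a one-in/one-out swap — that strictly decreases the least squares residual. Since a value-optimal $\beta$ on a fixed support equals its least squares projection (and the same-support neighbor is admissible in the definition of non-trivial local minimum), this rules out non-trivial local minima of $(\tilde{\Phi}_2)$. Write $R_S:=P_S^{\perp}Y$ for the residual of the least squares fit of $Y$ on the columns $X_S$, let $f(T):=\|P_T^{\perp}Y\|_2^{2}$, and decompose $Y=X\beta^{*}+W$. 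Set $\mathrm{Miss}:=S^{*}\setminus S$ and $\mathrm{Spur}:=S\setminus S^{*}$; when $|S|=k$ and $S\neq S^{*}$ one has $|\mathrm{Miss}|=|\mathrm{Spur}|=m\geq 1$, while the case $|S|<k$ is disposed of at once by inserting any $j\in\mathrm{Miss}$, for which the decrease $\langle R_S,X_j\rangle^{2}/\|P_S^{\perp}X_j\|_2^{2}$ is strictly positive.

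The two analytic inputs are the ones flagged in the paper. First, since $n\geq Ck\log p$, the matrix $X/\sqrt{n}$ satisfies the Restricted Isometry Property of order $3k$ with a small constant $\delta$ (Baraniuk et al.); I use this both for the near-isometry lower bound $\|P_S^{\perp}X_{\mathrm{Miss}}\beta^{*}_{\mathrm{Miss}}\|_2^{2}\geq(1-2\delta)\,n\|\beta^{*}_{\mathrm{Miss}}\|_2^{2}\geq(1-2\delta)\,nm$ on the uncaptured signal (using $|\beta^{*}|_{\min}\geq 1$), and to decouple, up to $(1\pm\delta)$ factors, the deletion and insertion parts of a swap. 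Second, I bound every noise and cross term by Hanson--Wright together with Gaussian concentration of quadratic forms in $W$ and in $Xv$, taking the union bound \emph{level by level} in $m$: the number of supports with $|\mathrm{Miss}|=m$ is at most $\binom{k}{m}\binom{p}{m}\leq(kp)^{m}$, so a per-configuration failure probability $e^{-c\,m\log p}$ suffices. This forces the thresholds — the noise and signal-leakage absorbed by the $m$ spurious columns is $\lesssim \delta^{2}nm+\sigma^{2}m\log p$, and the noise part of $\sum_{j\in\mathrm{Miss}}\beta_j^{*}\langle R_S,X_j\rangle=\langle W,P_S^{\perp}X_{\mathrm{Miss}}\beta^{*}\rangle$ is $\lesssim \sigma m\sqrt{n\log p}$. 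The crucial feature of the level-by-level bookkeeping is that the union cost scales like $m\log p$, not $k\log p$, so after dividing the total spurious energy by the $m$ spurious coordinates the per-coordinate noise is only $\lesssim \sigma^{2}\log p/n$.

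With these bounds in hand I produce the improving move by an exchange/averaging argument. Least squares stationarity on $S$ gives $\langle R_S,X_i\rangle=0$ for $i\in S$, while the RIP decoupling turns a swap's benefit into $a_j-d_i$ with $a_j\approx\langle R_S,X_j\rangle^{2}/n$ and $d_i\approx\hat\beta_i^{2}n$; hence assuming $S$ is a local minimum forces $\max_{j\notin S}|\langle R_S,X_j\rangle|\leq(1+o(1))\,n\min_{i\in S}|\hat\beta_i|$. I contradict this. For the left side, summing the missing correlations against the true coefficients yields $\sum_{j\in\mathrm{Miss}}\beta_j^{*}\langle R_S,X_j\rangle=\|P_S^{\perp}X_{\mathrm{Miss}}\beta^{*}\|_2^{2}+\langle W,\cdot\rangle\gtrsim nm$, so by Cauchy--Schwarz some $j\in\mathrm{Miss}$ has $|\langle R_S,X_j\rangle|\gtrsim n$. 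For the right side, the spurious coefficients obey $\sum_{i\in\mathrm{Spur}}\hat\beta_i^{2}=\tfrac1n\big(f(S\cap S^{*})-f(S)\big)\lesssim \delta^{2}m+\sigma^{2}m\log p/n$, so by averaging $\min_{i\in\mathrm{Spur}}|\hat\beta_i|^{2}\lesssim \delta^{2}+\sigma^{2}\log p/n=o(1)$, since $n\geq Ck\log p$ and $\sigma^{2}\lesssim k$ give $\sigma^{2}\log p\ll n$. Thus $\max_{j\notin S}|\langle R_S,X_j\rangle|\gtrsim n$ while $n\min_{i\in S}|\hat\beta_i|\leq n\cdot o(1)$, a contradiction; equivalently, the associated one-in/one-out swap strictly lowers the residual. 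Having ruled out local minima, Theorem~\ref{OGP} follows immediately from the contrapositive of Proposition~\ref{LocalMin}.

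The main obstacle is exactly the uniform control of this spurious fit: a priori a single incorrect column, chosen adversarially among exponentially many candidate supports, could absorb on the order of $\sigma^{2}k\log p$ of noise, which would swamp the per-coordinate signal $\gtrsim n$ once $\sigma^{2}$ is allowed to grow. The two devices that defeat it — the level-by-level union bound (cost $m\log p$ rather than $k\log p$, available because a level-$m$ support is pinned down by $m$ dropped true indices and $m$ added false ones) and the averaging of the total spurious energy over the $m$ spurious coordinates — are where the hypotheses $n\geq Cn_{\mathrm{alg}}$ and $\sigma^{2}\leq c\min\{k,\log p/\log\log p\}$ are consumed, the sharper $\log p/\log\log p$ bound supplying the slack needed for the $\chi^{2}$/Hanson--Wright tails to beat the level-$m$ union bounds uniformly in $m$. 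The near-orthogonality furnished by RIP is what makes the decoupling of the add and delete parts of a swap legitimate, and hence the aggregate signal/noise accounting valid.
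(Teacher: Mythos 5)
Your proposal is sound in outline and takes a genuinely different route from the paper's. The paper obtains Theorem \ref{nolocal} indirectly, as a corollary of Theorem \ref{LSA}: a non-trivial local minimum fed to the LSA would be output unchanged, contradicting the guarantee that the LSA's output has the true support. Theorem \ref{LSA} in turn rests on the $\alpha$-DLM framework: the noise is absorbed as an extra column of an augmented design $X'=[X,\; W/\sigma]$ with iid standard Gaussian entries, a deterministic RIP computation (Claim \ref{equivalence} and Proposition \ref{prop1}) rules out DLM triplets with large deviation energy, and Proposition \ref{prop2} rules out small-energy triplets via Hanson--Wright together with an $\varepsilon$-net over real coefficient triplets of cardinality $C_2(12r/\delta)^{2k+1}$, combined with the same support count $\binom{k}{m}\binom{p}{m}$ that you use. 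You instead exploit the fact that the definition of a non-trivial local minimum admits \emph{full least-squares refits} on neighboring supports (a same-support or swapped-support $\beta_1$ is admissible with arbitrary values), so every relevant statistic ($f(S)$, $\hat\beta_i$, $\langle R_S,X_j\rangle$) is a function of the finitely many supports; this eliminates the paper's discretization entirely and replaces it with a level-$m$ union bound over supports only. That is a real structural simplification, and it is worth noting that the net cardinality $(12r/\delta)^{2k+1}$ --- which forces $r^2\log r^2 \lesssim \log p$ in the paper --- is the sole source of the $\sigma^2 \lesssim \log p/\log\log p$ restriction there; your scheme appears to need only $\sigma^2\leq ck$ (which already gives $\sigma^2\log p \lesssim n/C$), so your closing attribution of the $\log p/\log\log p$ hypothesis to ``tail-versus-union-bound slack'' is misplaced: in your own accounting the Gaussian and $\chi^2$ tails beat the $(kp)^m$ count for a large enough constant $C$ irrespective of that cap. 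Two smaller points. First, the signal leakage into the spurious columns should be stated as $\lesssim \delta^2 n\|\beta^*_{\mathrm{Miss}}\|_2^2$ rather than $\delta^2 nm$, since the entries of $\beta^*$ are only bounded \emph{below} by $1$; this is harmless because your insertion gain $\gtrsim n\|\beta^*_{\mathrm{Miss}}\|_2^2/m$ and the averaged spurious coefficient $\lesssim \delta^2\|\beta^*_{\mathrm{Miss}}\|_2^2/m+\sigma^2\log p/n$ scale homogeneously, so the contradiction survives. Second, your argument proves Theorem \ref{nolocal} (and hence Theorem \ref{OGP} via Proposition \ref{LocalMin}) but would not directly yield Theorem \ref{LSA}, whose iterates are not least-squares fits on their supports and whose moves keep the remaining coefficients fixed; the paper's DLM machinery is built precisely to cover such non-refitted configurations, which explains why the paper pays the extra cost of the packing argument.
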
The complete proofs of both Theorem \ref{OGP} and Theorem \ref{nolocal} are presented in Section 4.

\subsection*{Success of Local Search}
 As stated in the introduction, in parallel to many results for random constrained satisfaction problems, the disappearance of OGP suggests the existence of a very simple algorithm succeeding in recovering $\beta^*$, usually exploiting the smooth local structure. Here, we present a result that reveals a similar picture. A natural implication of the absence of non-trivial local minima property is the success w.h.p. of the following very simple local search algorithm. Start with any vector $\beta_0$ which is $k$-sparse and then iteratively conduct ``local" minimization among all $\beta$'s with support of Hamming distance at most two away from the support of our current vector.

We now state this algorithm formally.
Let $e_i \in \mathbb{R}^p,i=1,2,\ldots,p$ be the standard basis vectors of $\mathbb{R}^p$.

\textbf{Local Search Algorithm (LSA)}

\begin{itemize}

\item[0.] Input: A $k$-sparse vector $\beta$ with support $S$.

\item[1.] For all $i \in S$ and $j  \in [p]$ compute $\mathrm{err}_{i}\left(j\right)=\min_{q} \|Y-X\beta+\beta_iX_i-qX_j\|_2$.

\item[2.] Find $(i_1,j_1)=\mathrm{argmin}_{i \in S,j \in [p]}\mathrm{err}_{i}\left(j\right) $ and $q_1:=\mathrm{argmin}_{q \in \mathbb{R}} \|Y-X\beta+\beta_{i_1}X_{i_1}-qX_{j_1}\|_2$.

\item[3.] If $\|Y-X\beta+\beta_{i_1}X_{i_1}-q_1X_{j_1}\|_2<\|Y-X\beta\|_2$, update the vector $\beta$ to $\beta-\beta_{i_1} e_{i_1}+qe_{j_1}$, the set $S$ to the support of the new $\beta$ and go to step 1. Otherwise terminate and output $\beta$.

\end{itemize} 
For the performance of the algorithm we establish the following result.

\begin{theorem}\label{LSA}
There exist $c,C>0$ so that if $\beta^* \in \mathbb{R}^p$ is an exactly $k$-sparse vector,  $n \geq C n_{\mathrm{alg}}$ and $\sigma^2 \leq  c |\beta^*|_{\text{min}}^2\min\{ \frac{\log p}{\log \log p},k\}$ then the algorithm LSA with an arbitrary $k$-sparse vector $\beta_0$ as input terminates in at most $ \frac{4k \|Y-X\beta_0\|_2^2 }{\sigma^2 n}$ iterations with a vector $\hat{\beta}$ such that \begin{itemize}
\item[(1)] $\mathrm{Support}\left( \hat{\beta} \right)=\mathrm{Support}\left( \beta^* \right)$ and
\item[(2)] $\|\hat{\beta}-\beta^*\|_2 \leq \sigma$,
\end{itemize}
w.h.p. as $k \rightarrow +\infty$.

\end{theorem}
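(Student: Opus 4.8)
The plan is to track the objective $f(\beta)=\|Y-X\beta\|_2^2$, which the algorithm lowers strictly at every accepted iteration, and to reduce everything to the analysis of a single swap. Writing $r=Y-X\beta$, deleting coordinate $i$ and re‑inserting coordinate $j$ at its optimal weight gives $\mathrm{err}_i(j)^2=\|r+\beta_iX_i\|_2^2-\langle r+\beta_iX_i,X_j\rangle^2/\|X_j\|_2^2$, so the accepted decrease is $\|r\|_2^2-\min_{i,j}\mathrm{err}_i(j)^2$. I would prove the theorem through three quantitative claims, with $S^*:=\mathrm{Support}(\beta^*)$ and $\Delta:=\sigma^2 n/(4k)$: (i) whenever the current support $S$ has size $k$ and $S\neq S^*$, the best swap lowers $f$ by at least a constant times $n|\beta^*|_{\min}^2\ge\Delta$; (ii) once $S=S^*$, any point from which no single‑coordinate swap improves is the least‑squares fit on $S^*$, which lies within $\sigma$ of $\beta^*$; and (iii) every accepted step lowers $f$ by at least $\Delta$ until $\ell_2$‑accuracy $\sigma$ is reached, so the number of such steps is at most $f(\beta_0)/\Delta=4k\|Y-X\beta_0\|_2^2/(\sigma^2 n)$. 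Claim (i) is the quantitative strengthening of ``no non‑trivial local minimum'' and forces the output support to be exactly $S^*$.

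First I would set up the deterministic scaffolding. By a union bound over all coordinate subsets of size at most $3k$, the cited RIP result \cite{Baraniuk2008} and the Hanson–Wright inequality \cite{hanson1971} guarantee that, w.h.p.\ and simultaneously for all sparse vectors involved, (a) $\|X_j\|_2^2=(1\pm\epsilon)n$ for every $j$; (b) $\|Xv\|_2^2=(1\pm\epsilon)n\|v\|_2^2$ and $|\langle Xu,Xv\rangle|\le\epsilon n\|u\|_2\|v\|_2$ for $3k$‑sparse $u,v$; and (c) $\max_j|\langle W,X_j\rangle|\le C\sigma\sqrt{n\log p}$. The hypotheses $n\ge Cn_{\mathrm{alg}}=Ck\log p$ and $\sigma^2\le c\,|\beta^*|_{\min}^2\min\{k,\log p/\log\log p\}$ are exactly what make the noise scale $\sigma\sqrt{n\log p}$ and the cross‑correlation scale $\epsilon n$ lower order against the per‑coordinate signal $n|\beta^*|_{\min}^2$, uniformly over the exponentially many supports the trajectory may visit.

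For claim (i), set $\delta=\beta^*-\beta$, which is supported on $S\cup S^*$ and hence at most $2k$‑sparse. If $S\neq S^*$ and $|S|=k$ there exist a wrong coordinate $i\in S\setminus S^*$ and a missing true coordinate $i^*\in S^*\setminus S$, and I would estimate the decrease of the swap deleting $i$ and inserting $i^*$. Its gain is controlled by $\langle r+\beta_iX_i,X_{i^*}\rangle$, whose dominant contribution is $\beta^*_{i^*}\|X_{i^*}\|_2^2\approx n\beta^*_{i^*}$, producing a gain of order $n(\beta^*_{i^*})^2\ge n|\beta^*|_{\min}^2$; using $\delta_i=-\beta_i$ one checks that deleting a wrong coordinate is, to first order, not costly (and is itself beneficial when $\|\delta\|_2=O(|\beta^*|_{\min})$). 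By the scaffolding the cross‑terms and noise are $o(n|\beta^*|_{\min}^2)$, so the net decrease is at least $c'n|\beta^*|_{\min}^2$, which exceeds $\Delta$ precisely because $\sigma^2\le c\,k|\beta^*|_{\min}^2$. The hardest part of the whole argument is making this swap lemma uniform when many coordinates are simultaneously wrong, i.e.\ when $\|\delta\|_2$ is large: then the cross‑term bound $\epsilon n\|\delta\|_2$ is no longer obviously lower order, and I expect to need either a careful choice of which wrong coordinate to delete, or an averaging/counting argument over $S\setminus S^*$ (equivalently a lower bound $\max_j|\langle r,X_j\rangle|\gtrsim n\|\delta\|_2/\sqrt{k}$) to locate a genuinely improving swap.

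For claim (ii), once $S=S^*$ the restricted map $u\mapsto\|Y-X_{S^*}u\|_2^2$ is strongly convex, its Hessian being $2X_{S^*}^\top X_{S^*}\succeq 2(1-\epsilon)nI$ by RIP; since the algorithm tests the single‑coordinate moves ($i=j$), any terminal point on $S^*$ has vanishing gradient and is therefore the unique least‑squares minimizer $\hat\beta_{S^*}=(X_{S^*}^\top X_{S^*})^{-1}X_{S^*}^\top Y$, and $\|\hat\beta-\beta^*\|_2\le\|(X_{S^*}^\top X_{S^*})^{-1}\|\,\|X_{S^*}^\top W\|_2\lesssim\sigma\sqrt{k/n}\le\sigma$ gives conclusion (2). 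For the count, on $S=S^*$ a Gauss–Southwell/PL estimate (again via RIP) yields a best‑coordinate decrease of at least $(c''/k)(f-f_{\min})$, which stays above $\Delta$ as long as $f-f_{\min}\ge\sigma^2 n/(4c'')$; and $f-f_{\min}=\|X_{S^*}(u-u^*)\|_2^2\le\sigma^2 n/(4c'')$ already forces $\|\beta-\beta^*\|_2\le\sigma$. Hence every step taken before $\sigma$‑accuracy is attained lowers $f$ by at least $\Delta$, so there are at most $f(\beta_0)/\Delta=4k\|Y-X\beta_0\|_2^2/(\sigma^2 n)$ of them; because $f$ only decreases afterwards, the support remains $S^*$ and the $\sigma$‑accuracy persists, giving (1) and (2) for the output. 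Combined with claim (i), which forbids terminating off $S^*$, this completes the argument.
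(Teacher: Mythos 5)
Your reduction to a per-step decrease bound, the counting argument, and the analysis on the correct support (your claim (ii): strong convexity via RIP, so a terminal point on $S^*$ is the least-squares fit, within $\sigma\sqrt{k/n}\ll\sigma$ of $\beta^*$) all match the skeleton of the paper's proof, whose Claim \ref{begin} plays exactly the role of your claim (iii) with the same threshold $\Delta=\sigma^2n/(4k)$. The genuine gap is the one you flag and then leave open: your claim (i), the uniform improving swap at every wrong-support iterate, cannot be derived from your scaffolding (a)--(c). With $n\asymp k\log p$ the restricted isometry constant $\delta_{3k}$ is only a small \emph{constant}, so the cross-term bound $|\langle Xu,Xv\rangle|\le \epsilon n\|u\|_2\|v\|_2$ leaves an error $\epsilon n\|\delta\|_2$ against a per-coordinate signal of size $n|\beta^*|_{\min}^2$; since the theorem allows $\sigma^2$ as large as $c\,\log p/\log\log p$, iterates with wrong support but $\|\delta\|_2\asymp\sigma\gg|\beta^*|_{\min}$ are unavoidable, and in that regime deterministic RIP arithmetic cannot even fix the sign of $\langle r+\beta_iX_i,X_{i^*}\rangle$, let alone certify a gain of order $n|\beta^*|_{\min}^2$. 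Your fallback averaging bound $\max_j|\langle r,X_j\rangle|\gtrsim n\|\delta\|_2/\sqrt{k}$ yields an insertion gain of order $n\|\delta\|_2^2/k$, which falls \emph{below} the required decrement $\Delta=\sigma^2n/(4k)$ precisely when $|\beta^*|_{\min}\le\|\delta\|_2\ll\sigma$ (few wrong coordinates inside the noise ball) --- a regime that genuinely occurs once $\sigma^2\gg1$ --- and it does not account for the deletion cost of the swap. Tellingly, your outline never actually uses the hypothesis $\sigma^2\lesssim\log p/\log\log p$, only $\sigma^2\le ck$; this is a symptom that the hard regime is not covered.

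The paper closes exactly this regime with machinery your proposal has no counterpart for. It first augments $X$ with the column $W/\sigma$, producing a Gaussian $X'\in\mathbb{R}^{n\times(p+1)}$ with $Y=X'(\beta^*,\sigma)^{T}$, so the noise becomes one more sparse coordinate inside the relevant quadratic forms rather than being handled by a crude uniform bound like your (c). A failed swap at $\beta$ is then encoded as a $\frac14$-deviating-local-minimum triplet $(a,b,c)$ (missing true coordinates, spurious coordinates, common part together with the $\sigma$-coordinate). Proposition \ref{prop1} rules such triplets out deterministically via RIP \emph{only} when $\|a\|_2^2+\|b\|_2^2\ge\frac14\|c\|_2^2$; the complementary regime --- small $\|a\|_2^2+\|b\|_2^2$ relative to $\|c\|_2^2$, i.e.\ your hard case --- is Proposition \ref{prop2}, proved by applying Hanson--Wright to the quadratic form of each candidate triplet and then union-bounding over a packing net of all triplets and all admissible support patterns. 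The entropy-versus-concentration tradeoff there ($p^m$ support choices against $\exp(-c_0n\min\{1,m/r^2\})$ with $r^2\gtrsim\sigma^2$) is precisely where $\sigma^2\lesssim\log p/\log\log p$ is consumed. Without an argument of this type --- pointwise concentration plus a union bound over the exponentially many candidate near-stationary configurations, not uniform RIP facts --- your claim (i) remains unproven, and with it the core of the theorem.
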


The complete proof of Theorem \ref{LSA} requires some care and is approximately 16 pages long. It is fully presented in Section 4.

\section{Proof of Theorem \ref{LASSO}}

\subsection{Auxilary Lemmata}

\begin{lemma}\label{lem:simple}
Fix any $C_1>0$. Any vector $\beta$ that satisfies $\|\beta\|_1 \leq  k-C_1 \sigma \sqrt{k}$ also satisfies $\|\beta-\beta^*\|_2  \geq C_1 \sigma$.
\end{lemma}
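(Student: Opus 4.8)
The plan is to exploit the fact that in this subsection $\beta^*$ is an exactly $k$-sparse binary vector, so its support $S := \mathrm{Support}(\beta^*)$ has $|S| = k$ and $\beta^*_i = 1$ for every $i \in S$. The naive attempt — lower-bound $\|\beta - \beta^*\|_1$ by the reverse triangle inequality and then convert $\ell_1$ to $\ell_2$ — fails directly, because $\beta - \beta^*$ may have support as large as $p$, so Cauchy-Schwarz over all $p$ coordinates would only give $\|\beta-\beta^*\|_2 \geq C_1\sigma\sqrt{k}/\sqrt{p}$, which is far too weak. The key idea is therefore to discard all coordinates outside $S$ and work only on the $k$-dimensional restriction $(\beta-\beta^*)_S$, where the $\ell_1 \to \ell_2$ passage loses only a factor $\sqrt{k}$.

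Concretely, I would first control the discrepancy on $S$ from below in $\ell_1$. Since $|\beta_i - 1| \geq 1 - \beta_i$ for every $i$,
$$\sum_{i \in S} |\beta_i - \beta^*_i| \;=\; \sum_{i\in S}|\beta_i - 1| \;\geq\; \sum_{i \in S}(1 - \beta_i) \;=\; k - \sum_{i\in S}\beta_i.$$
Next, bound $\sum_{i\in S}\beta_i \leq \sum_{i\in S}|\beta_i| = \|\beta_S\|_1 \leq \|\beta\|_1$, and feed in the hypothesis $\|\beta\|_1 \leq k - C_1\sigma\sqrt{k}$ to conclude
$$\sum_{i\in S}|\beta_i - \beta^*_i| \;\geq\; k - \left(k - C_1\sigma\sqrt{k}\right) \;=\; C_1\sigma\sqrt{k}.$$
That is, the $\ell_1$-norm of $\beta-\beta^*$ restricted to the $k$ coordinates of $S$ is at least $C_1\sigma\sqrt{k}$.

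Finally, I would apply Cauchy-Schwarz over the $k$ coordinates of $S$, namely $\|(\beta-\beta^*)_S\|_1 \leq \sqrt{k}\,\|(\beta-\beta^*)_S\|_2$, which yields $\|(\beta-\beta^*)_S\|_2 \geq C_1\sigma$, and then use $\|\beta-\beta^*\|_2 \geq \|(\beta-\beta^*)_S\|_2$ to finish. There is no genuinely hard analytic step here; the only obstacle is the conceptual one of recognizing that one must localize to $S$ \emph{before} converting norms, since the cardinality $k$ of the support is precisely what makes the $\ell_1\to\ell_2$ passage lose only the factor $\sqrt{k}$ that the hypothesis provides. Everything else reduces to the reverse triangle inequality and Cauchy-Schwarz.
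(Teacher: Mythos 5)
Your proof is correct and is essentially the same as the paper's: both arguments localize to the support $S$ of the binary $\beta^*$, lower-bound $\|(\beta-\beta^*)_S\|_1$ by $k-\|\beta\|_1$ (the paper via the reverse triangle inequality $\|\beta^*_S\|_1-\|\beta_S\|_1\leq\|\beta_S-\beta^*_S\|_1$, you via the coordinate-wise bound $|\beta_i-1|\geq 1-\beta_i$, which is the same inequality unpacked), and then apply Cauchy--Schwarz over the $k$ coordinates of $S$ followed by $\|\beta-\beta^*\|_2\geq\|(\beta-\beta^*)_S\|_2$. The only cosmetic difference is that the paper argues by contraposition while you argue directly.
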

\begin{proof}

Assume $\beta$ satisfies $\|\beta-\beta^*\|_2 \leq C_1 \sigma$.
We let $S$ denote the support of $\beta^*$, and let $\beta_S \in \mathbb{R}^p$ be the vector which equals to $\beta$ in the coordinates that correspond to $S$ and is zero otherwise.
We have by the triangle inequality and the Cauchy Schwartz inequality,
\begin{align*}
&k-\|\beta_S\|_1=\|\beta^*_S\|_1-\|\beta_S\|_1 \leq \| \beta_S-\beta^*_S \|_1 \leq \sqrt{k} \|\left(\beta-\beta^*\right)_S\|_2 \leq \sqrt{k} \|\beta-\beta^*\|_2  \leq C_1 \sigma \sqrt{k},
\end{align*} which gives $k-C_1 \sigma\sqrt{k} \leq \|\beta_S\|_1 \leq \|\beta\|_1$.
\end{proof}

We also need the Theorem 3.1. from \cite{gamarnikzadik}, which we re-state here for convenience.

\begin{theorem}[\cite{gamarnikzadik}]\label{theorem:PureNoise}

Let $Y' \in \mathbb{R}^n$ be a vector with i.i.d. normal entries with mean zero and abritrary variance $\mathrm{Var}(Y_1)$ and $X \in \mathbb{R}^{n \times p}$ be a matrix with iid standard Gaussian entries. Then for every $C>0$ there exists $c_0>0$ such that if $c<c_0$ and for some integer $k'$ it holds $k' \log k' \leq Cn$,
$k'\le \mathrm{Var}\left(Y'_1\right) \le 3k'$, and $ n\le ck'\log p$, then there exists an exactly $k'$-sparse binary $\beta$ such that 
\begin{align*}
n^{-{1\over 2}}\|Y-X\beta\|_2\le \exp\left(\frac{1}{2c} \right)\sqrt{k'+\mathrm{Var}\left(Y'_1\right)}\exp\left(-{k'\log p\over n}\right)
\end{align*}
 w.h.p. as $k'\rightarrow\infty$.
\end{theorem}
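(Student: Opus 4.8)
The statement is a pure-noise overfitting result, so the plan is to treat it as a moment computation over the $\binom{p}{k'}$ candidate supports. Since $\beta$ is required to be binary, choosing $\beta$ amounts to choosing its support $S\subseteq[p]$ with $|S|=k'$, and then $X\beta=Z_S:=\sum_{i\in S}X_i$, which (as $X$ has iid standard Gaussian entries and $X$ is independent of $Y'$) is distributed as $N(0,k'I_n)$. First I would condition on the high-probability event that $\|Y'\|_2^2\in[(1-\epsilon)nv,(1+\epsilon)nv]$, where $v:=\mathrm{Var}(Y'_1)$; this holds w.h.p.\ by $\chi^2$-concentration, and on it all the randomness of interest is that of $X$. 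Set the target radius
\[
t:=\sqrt{n}\,\exp\!\left(\tfrac{1}{2c}\right)\sqrt{k'+v}\,\exp\!\left(-\tfrac{k'\log p}{n}\right),
\]
call a support $S$ \emph{good} if $\|Y'-Z_S\|_2\le t$, and let $Z:=\sum_{|S|=k'}\mathbb{1}\{S\text{ good}\}$. The goal is reduced to showing $\pr[Z\ge 1]\to 1$.

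The first moment is a small-ball estimate. Because $Z_S\sim N(0,k'I_n)$ has density $(2\pi k')^{-n/2}\exp(-\|y\|_2^2/(2k'))$, and because $t$ is exponentially smaller than the typical scale $\sqrt{n(k'+v)}$ of $\|Y'-Z_S\|_2$ (here the hypothesis $n\le ck'\log p$ forces $k'\log p/n\ge 1/c$, so $\exp(-k'\log p/n)$ is genuinely tiny), the density is essentially constant over the ball $B(Y',t)$; a two-sided check shows the density ratio across the ball is $\exp(o(1))$. Hence
\[
\pr[S\text{ good}]=(1+o(1))\,(2\pi k')^{-n/2}\exp\!\left(-\tfrac{\|Y'\|_2^2}{2k'}\right)\frac{\pi^{n/2}}{\Gamma(\tfrac n2+1)}\,t^{\,n}.
\]
Taking logarithms of $\E{Z}=\binom{p}{k'}\pr[S\text{ good}]$ and using $\log\binom{p}{k'}=k'\log p-k'\log k'+O(k')$ (valid since $\log k'\ll\log p$, which follows from $k'\log k'\le Cn\le Cck'\log p$), the decisive cancellation is that the factor $\exp(-k'\log p/n)^n=\exp(-k'\log p)$ inside $t^n$ exactly kills the $\exp(k'\log p)$ from $\binom{p}{k'}$. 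After this cancellation, Stirling collapses the remaining $n$-dependent logarithms to an $O(n)$ term controlled by $v/k'\in[1,3]$, the density contributes $-\tfrac{nv}{2k'}=-\Theta(n)$, the hypothesis $k'\log k'\le Cn$ gives $-k'\log k'\ge -Cn$, and the slack factor $\exp(\tfrac1{2c})$ in $t$ contributes $+\tfrac{n}{2c}$. Thus $\log\E{Z}\ge \tfrac{n}{2c}-O(n)$, and choosing $c$ small enough makes the slack dominate every $O(n)$ error term, so $\E{Z}\to\infty$ (note $n\to\infty$ since $k'\to\infty$ and $n\ge k'\log k'/C$). This is exactly where the three hypotheses and the $\exp(1/(2c))$ factor are used.

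Passing from $\E{Z}\to\infty$ to $\pr[Z\ge1]\to1$ requires a second-moment (Paley--Zygmund/Chebyshev) argument, and this is the step I expect to be the main obstacle. Writing $\E{Z^2}=\E{Z}+\sum_{S\ne S'}\pr[S,S'\text{ good}]$ and grouping the off-diagonal pairs by overlap $m=|S\cap S'|$, I would decompose $Z_S=A+B_S$ and $Z_{S'}=A+B_{S'}$ with $A=\sum_{i\in S\cap S'}X_i\sim N(0,mI_n)$ and $B_S,B_{S'}$ independent of $A$ and of each other; conditioned on $A$ the two ``good'' events become independent, each a small-ball event for $N(0,(k'-m)I_n)$ against $B(Y'-A,t)$. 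The disjoint case $m=0$ reproduces $(1+o(1))\E{Z}^2$, and the crux is to show the overlapping terms are subdominant: the combinatorial weight $\binom{p}{k'}\binom{k'}{m}\binom{p-k'}{k'-m}$ must be beaten by the loss in the conditional small-ball probabilities as $m$ grows, so that $\sum_{m\ge1}$ contributes $o(\E{Z}^2)$. Establishing $\E{Z^2}=(1+o(1))\E{Z}^2$ uniformly in the regime, which then yields $\pr[Z=0]\le \mathrm{Var}(Z)/\E{Z}^2\to0$, is the technically delicate part; it demands careful two-sided small-ball bounds and a uniform summation of the overlap contributions, and is where most of the work lies.
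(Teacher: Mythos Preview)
This theorem is not proved in the present paper: it is quoted verbatim from \cite{gamarnikzadik} (``We also need the Theorem 3.1.\ from \cite{gamarnikzadik}, which we re-state here for convenience''), and is only \emph{used} here as a black box inside the proof of Lemma~\ref{claimV}. So there is no ``paper's own proof'' to compare against.

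That said, your outline is the natural one and matches what is done in the cited reference: a first-/second-moment argument over the $\binom{p}{k'}$ binary candidates, with the key cancellation being $\bigl(\exp(-k'\log p/n)\bigr)^n$ against $p^{k'}$, and with the $\exp(1/(2c))$ factor providing exactly the $n/(2c)$ slack needed to absorb all remaining $O(n)$ error terms (from Stirling, from $v/k'\in[1,3]$, and from $k'\log k'\le Cn$). One small inaccuracy: the claim that the Gaussian density is constant on $B(Y',t)$ up to a factor $\exp(o(1))$ is not literally true in this regime, since the log-density variation is of order $t\|Y'\|/k'\asymp n\exp(-k'\log p/n)\le n\exp(-1/c)$, which is $O(n)$ rather than $o(1)$. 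However, this does not break your argument, because $n\exp(-1/c)\ll n/(2c)$ for small $c$, so this error is still dominated by the slack term; you just cannot phrase the small-ball estimate as a $(1+o(1))$ multiplicative approximation, only as a contribution of $O(n)$ to the logarithm. The second-moment/overlap step you describe is indeed where the real work lies, and your decomposition $Z_S=A+B_S$ with $A$ the common part is exactly the right structure for controlling the correlation of the indicator events across overlapping supports.
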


Finally, we establish the following Lemma.
\begin{lemma}\label{claimV}
Under the assumptions of Theorem \ref{LASSO} there exists universal constants $c>0$ such that the following holds. If $n^* \leq n \leq c k \log p$ then there exists $\alpha \in [0,1]^p$ with

 \begin{itemize}
\item[(1)] $n^{-\frac{1}{2}}\|Y-X\alpha\|_2\leq \sigma$
 
\item[(2)] $\|\alpha\|_1  = k-2C_1 \sigma \sqrt{k},$
\end{itemize} 
w.h.p. as $k \rightarrow + \infty$.
\end{lemma}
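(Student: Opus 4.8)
The plan is to realize $\alpha$ as a sparse binary vector supported \emph{away} from $\mathrm{Support}(\beta^*)$ that fits $Y$ almost as well as $\beta^*$ does, but on a slightly smaller support, and to produce it via Theorem \ref{theorem:PureNoise}. The enabling observation is that $Y=X\beta^*+W$ has, marginally, i.i.d.\ entries: since $\beta^*$ is binary and exactly $k$-sparse, $(X\beta^*)_i$ is a sum of $k$ independent $N(0,1)$ variables, so $Y_i\sim N(0,k+\sigma^2)$ independently across $i$. Hence $Y$ is distributed exactly like a pure-noise vector of variance $k+\sigma^2$. To exploit this I would condition on the noise $W$ together with the columns $\{X_i:i\in\mathrm{Support}(\beta^*)\}$; this fixes $Y$, while the remaining $p-k$ columns $X_j$, $j\notin\mathrm{Support}(\beta^*)$, stay i.i.d.\ standard Gaussian and independent of $Y$. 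Searching for a fitting vector only over these fresh columns thus reproduces exactly the hypotheses of Theorem \ref{theorem:PureNoise}, with $Y'=Y$, $\mathrm{Var}(Y'_1)=k+\sigma^2$, and ambient dimension $p-k=\Theta(p)$ (so that $\log(p-k)=(1+o(1))\log p$).

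I would then invoke Theorem \ref{theorem:PureNoise} with sparsity $k':=\lfloor k-2C_1\sigma\sqrt{k}\rfloor$, after checking its three hypotheses. The variance sandwich $k'\le k+\sigma^2\le 3k'$ holds once $\hat C$ is large relative to $C_1$, because $k'=k-\Theta(\sigma\sqrt k)$ and $\sigma^2\le k/\hat C$. The growth condition $k'\log k'\le Cn$ follows from $k'\le k$, the upper bound $\log k\lesssim\sqrt{\log p}$, and $n\ge n^*\gtrsim k\sqrt{\log p}$, upon taking the free constant $C$ in the theorem large. The remaining condition $n\le c'k'\log p$ is inherited from $n\le ck\log p$ since $k'=(1-o(1))k$. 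The theorem then yields an exactly $k'$-sparse binary $\gamma$ on fresh coordinates with
\[
n^{-\frac12}\|Y-X\gamma\|_2\le \exp\!\left(\tfrac{1}{2c'}\right)\sqrt{k'+k+\sigma^2}\,\exp\!\left(-\tfrac{k'\log p}{n}\right).
\]

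To obtain (2) exactly I would take $\alpha$ to be $\gamma$ with a single additional coordinate raised to the fractional value $\theta:=(k-2C_1\sigma\sqrt k)-k'\in[0,1)$, chosen among the fresh indices $j$ so as to maximize $\langle Y-X\gamma,X_j\rangle$; this makes $\|\alpha\|_1=k-2C_1\sigma\sqrt k$ and $\alpha\in[0,1]^p$, while the correlated correction does not increase the residual, so the displayed bound still controls $n^{-1/2}\|Y-X\alpha\|_2$. The crux is then to show this bound is at most $\sigma$. Here the definition $n^*=2k\log p/\log(2k/\sigma^2+1)$ enters through the exact identity $\sqrt{2k+\sigma^2}\,\exp(-k\log p/n^*)=\sigma$; shrinking the exponent's $k$ to $k'$ costs only a bounded multiplicative factor, since $(\sigma/\sqrt k)\log(2k/\sigma^2)$ remains small throughout the regime, and one controls the $n$-dependence by noting that the fit is hardest at the largest admissible $n$ and calibrating $c$.

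The step I expect to be the main obstacle is precisely this final residual estimate. The bound from Theorem \ref{theorem:PureNoise} carries the prefactor $\exp(1/(2c'))>1$ together with the correction incurred by reducing the support from $k$ to $k'$, and the whole point is to verify that throughout the range $n\in[n^*,ck\log p]$ these are dominated so that the right-hand side stays below the target $\sigma$ rather than merely below a constant multiple of it. This is where the regime hypotheses $\hat C\sigma^2\le k\le\min\{1,\sigma^2\}e^{C\sqrt{\log p}}$ must be used to keep every logarithmic factor under control and where the constant $c$ has to be pinned down. The only other technical point, the fractional adjustment used to hit the exact value in (2) without disturbing (1), is comparatively routine.
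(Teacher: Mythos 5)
Your construction has a genuine gap, and it sits exactly where you suspected: the final residual estimate is not merely delicate, it fails outright for your choice of $\alpha$. Because your $\alpha$ is supported entirely off $\mathrm{Support}(\beta^*)$ (up to one fractional coordinate), condition (1) forces you to fit the full signal $Y$, whose entries have variance $k+\sigma^2$, down to residual level $\sigma$ using only fresh columns. The best such fit --- both by the upper bound of Theorem \ref{theorem:PureNoise} that you invoke and by the matching first-moment heuristic (there are only $e^{(1+o(1))k'\log p}$ candidate supports, so no binary $k'$-sparse fit does better in the exponent) --- is of order $\sqrt{k}\exp\left(-\Theta(k\log p/n)\right)$. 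Your own identity $\sqrt{2k+\sigma^2}\exp(-k\log p/n^*)=\sigma$ shows this matches $\sigma$ with \emph{zero slack} exactly at $n=n^*$; for any larger $n$ the achievable residual is roughly $e^{k\log p\left(1/n^*-1/n\right)}\sigma$, which grows rapidly: at $n=2n^*$ it is already about $(1+2k/\sigma^2)^{1/4}\sigma$, and at $n=ck\log p$ it is $\Theta\left(\sqrt{k}\,e^{-1/(2c)}\right)$, exceeding $\sigma$ whenever $k/\sigma^2>e^{1/c}$. Since the assumed regime allows $k/\sigma^2$ as large as $e^{C\sqrt{\log p}}$, no calibration of the constant $c$ rescues the argument; the prefactor $\exp(1/(2c'))$ and the $k\to k'$ reduction are additional losses on top of an already-failing main term. (Your fractional-coordinate adjustment is also not automatically residual-decreasing for a forced value of $\theta$, but that is a side issue by comparison.)

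The missing idea, which is how the paper proceeds, is to \emph{not} abandon $\beta^*$: the paper takes $\alpha=\lambda\beta^*+(1-\lambda)\beta$ with $\lambda=1-4C_1\sqrt{\sigma^2/k}$ and $\beta$ an exactly $k/2$-sparse binary vector on fresh columns. Then $\|\alpha\|_1=k(\lambda+1)/2=k-2C_1\sigma\sqrt{k}$ exactly, and $Y-X\alpha=(1-\lambda)\left(X\beta^*+(1-\lambda)^{-1}W-X\beta\right)$, so the target $n^{-1/2}\|Y-X\alpha\|_2\leq\sigma$ becomes $n^{-1/2}\|Y'-X\beta\|_2\leq\frac{1}{4}\sqrt{k}\exp\left(-k\log p/(5n)\right)$ for the pure-noise vector $Y'=X\beta^*+(1-\lambda)^{-1}W$ of per-entry variance $\approx k$. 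The crucial effect of the $(1-\lambda)$ shrinkage is that the fitting target is now of order $\sqrt{k}$ rather than of order $\sigma$, and Theorem \ref{theorem:PureNoise} at sparsity $k'=k/2$ delivers roughly $\sqrt{2k}\exp\left(-k\log p/(2n)\right)$, which beats the target precisely \emph{because} $n\leq ck\log p$: the comparison reduces to $\exp(1/(2c_1))\sqrt{2}\leq\frac{1}{4}\exp\left(2k\log p/(15n)\right)$, so a smaller $c$ only helps, in direct contrast to your route where the upper end of the $n$-range is fatal. The hypothesis $n\geq n^*$ is used only to guarantee $\lambda\in[0,1]$, hence $\alpha\in[0,1]^p$ --- not for the fit itself. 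So while you correctly identified the enabling probabilistic tool and the conditioning-on-fresh-columns device, the convex-combination construction is the essential missing ingredient, and without it the lemma cannot be reached along your route.
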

 \begin{proof}

Let $$\lambda:=1-4C_1\sqrt{ \frac{\sigma^2}{ k}}$$ and $$A_{C_1}=\{\lambda \beta^*+(1-\lambda)\beta | \beta \in \{0,1\}^p, \|\beta\|_0=k/2, \mathrm{Support}\left(\beta\right) \cap \mathrm{Support}\left(\beta^*\right)=\emptyset \}.$$ $A_{C_1}$ is the set of vectors of the form $\alpha:=\lambda \beta^*+(1-\lambda)\beta$ where $\beta$ is exactly $\frac{k}{2}$-sparse binary with support disjoint from the support of $\beta^*$. Since by our assumption $n>n^*$ or equivalently
 $$ \frac{k \log p}{5n}<\frac{1}{10}\log \left(1+\frac{2k}{\sigma^2} \right)$$ we conlude that for some $C'>0$ large enough, if $C' \sigma^2 \leq k$ then $$4C_1 \sqrt{\frac{\sigma^2}{ k}}=4\exp\left( \frac{k \log p}{5n}\right) \sqrt{\frac{\sigma^2}{ k}} < 4\left(1+\frac{2k}{\sigma^2} \right)^{\frac{1}{10}}\sqrt{\frac{\sigma^2}{ k}}<1.$$ In particular $\lambda >0$ and thus $\lambda \in [0,1]$. Therefore $A_{C_1} \subset [0,1]^p$. It is straightforward to see also that all these vectors have $\ell_1$ norm equal to $k \lambda+k(1-\lambda)/2=k( \lambda +1)/2$. But for our choice of $\lambda$ we have $$ k( \lambda +1)/2=k-2 C_1\sigma \sqrt{k}$$ Therefore for all $\alpha \in A_{C_1}$ it holds $\|\alpha\|_1 = k-2C_1 \sigma \sqrt{k}$ and $\alpha \in [0,1]^p$. In particular, in order to prove our claim it is enough to find $\alpha \in A_{C_1}$ with $n^{-\frac{1}{2}}\|Y-X\alpha\|_2\leq \sigma$.

We need to show that for some $c>0$, there exists w.h.p. a binary vector $\beta$ which is exactly $k/2$ sparse, has disjoint support with $\beta^*$ and also satisfies that \begin{equation*} n^{-\frac{1}{2}}\|Y-X(\lambda \beta^*+(1-\lambda)\beta)\|_2 \leq \sigma. \end{equation*} We notice the following equalities: 
\begin{align*}
\|Y-X(\lambda \beta^*+(1-\lambda)\beta)\|_2
&= \|X \beta^*+W - \lambda X \beta^*-\left(1-\lambda \right)X \beta \|_2\\
& =(1-\lambda) \|X \beta^*+\left(1-\lambda\right)^{-1}W-X \beta\|_2.
\end{align*}Hence the condition we need to satisfy can be written equivalently as
\begin{align*}
&n^{-\frac{1}{2}}\|X \beta^*+\left(1-\lambda\right)^{-1}W-X \beta\|_2 \leq \left(1-\lambda\right)^{-1}\sigma,
\end{align*}or equivalently \begin{align*}
n^{-\frac{1}{2}} \|Y'-X\beta\|_2 \leq \frac{1}{4}\sqrt{k} \exp \left( -\frac{k \log p}{5n} \right),
\end{align*} where for the last equivalence we set $Y':=X\beta^*+(1-\lambda)^{-1}W$ and used the definition of $\lambda$ for the right hand side. 

Now we apply Theorem \ref{theorem:PureNoise} for $Y'$ $X' \in \mathbb{R}^{n \times \left(p-k\right)}$, which is $X$ after we deleted the $k$ columns corresponding to the support of $\beta^*$, and $k'=k/2$. We first check that the assumptions of the Theorem are satisfied. For all $i$, $Y'_i$ are iid zero mean Gaussian with $$\mathrm{Var}\left(Y'_i\right)=k+\sigma^2\left(1-\lambda\right)^{-2}=k(1+\frac{1}{16}\exp \left( -\frac{2k \log p}{5 n}\right)).$$ In particular for some constant $c_0>0$ if $n \leq c_0 k \log p$ it holds $$k'=\frac{k}{2} \leq \mathrm{Var}\left(Y'_i\right) \leq 3k/2 =3k'.$$ Finally we need $k' \log k' \leq C'n$ for some $C'>0$. For $k'=\frac{k}{2}$ it holds $k' \log k' \leq k \log k$ and also as $\hat{C} \sigma^2 \leq k \leq \min\{1,\sigma^2\}\exp \left( C\sqrt{ \log p} \right)$ it can be easily checked that for some constant $C'>0$ it holds $k \log k \leq C'\frac{2k \log p}{\log \left( \frac{2k}{\sigma^2}+1\right)}=C'n^*$. As we assume $n \geq n^*$ we get $k' \log k' \leq C'n^* \leq Cn$ as needed. Therefore all the conditions are satisfied.

Applying Theorem \ref{theorem:PureNoise} we obtain that for some constant $c_1>0$ there exists w.h.p. an exactly $k/2$ sparse vector $\beta$ with disjoint support with $\beta^*$ and $$n^{-\frac{1}{2}}\|Y'-X\beta\|_2 \leq \exp \left( \frac{1}{2c_1} \right) \sqrt{k'+\mathrm{Var}\left(Y'_i\right)} \exp\left(-\frac{k' \log (p-k)}{n} \right).$$ Plugging in the value for $k'$ and using $\mathrm{Var}\left(Y'_i\right) \leq \frac{3}{2}k$ we conclude the w.h.p. existence of a binary $k/2$-sparse vector $\beta$ with disjoint support with $\beta^*$ and $$n^{-\frac{1}{2}}\|Y'-X\beta\|_2 \leq \exp \left( \frac{1}{2c_1} \right)\sqrt{2k} \exp\left( -\frac{k \log (p-k)}{2n}\right).$$ Finally we need to verify
$$\exp \left( \frac{1}{2c_1} \right) \sqrt{2k} \exp\left( -\frac{k \log (p-k)}{2n}\right) \leq \frac{1}{4}\sqrt{k} \exp \left( -\frac{k \log p}{5n} \right) .$$ We notice that as $k/\sqrt{p} \rightarrow 0$ as $k,p \rightarrow +\infty$, which is true since we assume $k \leq \exp \left(C \sqrt{\log p}\right),$ we have $$\exp \left( \frac{1}{2c_1} \right) \sqrt{2k} \exp\left( -\frac{k \log (p-k)}{2n}\right) \leq \exp \left( \frac{1}{2c_1} \right) \sqrt{2k} \exp\left( -\frac{k \log p}{3n}\right), \text{ for large enough } k,p.$$ Hence we need to show $$\exp \left( \frac{1}{2c_1} \right) \sqrt{2k} \exp\left( -\frac{k \log p}{3n}\right) \leq \frac{1}{4}\sqrt{k} \exp \left( -\frac{k \log p}{5n} \right) .$$ or equivalently 
$$ \exp \left( \frac{1}{2c_1} \right) \sqrt{2} \leq  \frac{1}{4} \exp \left(\frac{2k \log p}{15n} \right)$$ which is clearly satisfied if $n \leq c_3 k \log p$ for some constant $c_3>0$. Therefore choosing $c=\min\{c_1,c_3\}$ the proof of the claim and of the theorem is complete. 

\end{proof}

\subsection{Proofs of Theorem \ref{LASSO}}
In this subsection we use the Lemmata from the previous subsections and prove the Theorem \ref{LASSO}.

\begin{proof}[Proof of Theorem \ref{LASSO}]
Let
 \begin{align}\label{eq:C_1}
C_1:=\exp\left(\frac{k \log p}{5n}\right).
\end{align}

According the Lemma \ref{lem:simple} it suffices to show that for $C_1$ given by (\ref{eq:C_1}), \begin{align} \label{goalL}\max\{\|\beta_{\mathrm{LASSO},\lambda}\|_1, \|\beta_{\mathrm{LASSO(box)},\lambda}\|_1 \leq k-C_1 \sigma \sqrt{k},\end{align} w.h.p. as $k \rightarrow + \infty$. 

To show this, we notice that since $\beta_{\mathrm{LASSO},\lambda}$ and  $ \beta_{\mathrm{LASSO(box)},\lambda}$ are the optimal solution to $\mathrm{LASSO}_{\lambda}$ and $ \mathrm{LASSO(box)}_{\lambda}$ respectively, they obtains objective value smaller then any other feasible solution. Note that $\alpha$ given in Lemma \ref{claimV} is feasible for both quadratic optimization problems. Hence it holds almost surely, \begin{align} \label{firstLasso}
\max_{v \in \{\beta_{\mathrm{LASSO},\lambda},\beta_{\mathrm{LASSO(box)},\lambda}  \} }\{ \frac{1}{n} \|Y-Xv\|_2^2+\lambda_p \|v\|_1\} \leq   \frac{1}{n} \|Y-X\alpha\|_2^2+\lambda_p\|\alpha\|_1 
\end{align}
Hence we conclude that w.h.p. as $k \rightarrow + \infty$,
\begin{align}
\lambda_p \max\{\|\beta_{\mathrm{LASSO},\lambda}\|_1, \|\beta_{\mathrm{LASSO(box)},\lambda}\|_1  
& \leq  \max_{v \in \{\beta_{\mathrm{LASSO},\lambda}, \beta_{\mathrm{LASSO(box)},\lambda}  \} }\{ \frac{1}{n} \|Y-Xv\|_2^2+\lambda_p \|v\|_1\} \notag \\
&  \leq   \frac{1}{n} \|Y-X\alpha\|_2^2+\lambda_p\|\alpha\|_1 \text{ , using (\ref{firstLasso})} \notag\\
 & \leq  \sigma^2+\lambda_p \left(k-2C_1 \sqrt{k}\sigma\right) \text{, using Lemma \ref{claimV}} \notag
\end{align}or by rearranging,
\begin{align}\label{Las1}
\lambda_p\left(k-C_1 \sigma \sqrt{k}-\max\{\|\beta_{\mathrm{LASSO},\lambda}\|_1, \|\beta_{\mathrm{LASSO(box)},\lambda}\|_1 \right) \geq \left(\lambda_p C_1 \sqrt{k}-\sigma \right)\sigma.
\end{align} By assumption on $\lambda_p$ satisfying (\ref{LambdaC}) we conclude from (\ref{eq:C_1}) that $$\lambda_p C_1 \sqrt{k} \geq \sigma.$$ Combining the last inequality we have that the right hand side of (\ref{Las1}) is nonnegative, and therefore  (\ref{Las1}) implies that (\ref{goalL}) holds w.h.p. as $k \rightarrow +\infty$. This completes the proof of the Theorem \ref{LASSO}.
\end{proof}

\section{ LSA Algorithm and the Absence of the OGP}

\subsection{ Preliminaries}

   We introduce the notion of a super-support of a finite dimensional real vector.
\begin{definition}
Let $d \in \mathbb{N}$. We call a set $\emptyset \not = S \subseteq [d]$ a \textbf{super-support} of a vector $x \in \mathbb{R}^d$ if $\mathrm{Support}\left(x\right) \subseteq S$.
\end{definition}
  
 We also need the definition and some basic properties of the Restricted Isometry Property (RIP).
 
 \begin{definition}
Let $n,k,p \in \mathbb{N}$ with $k \leq p$. We say that a matrix $X \in \mathbb{R}^{n \times p}$ satisfies the \textbf{ $k$-Restricted Isometry Property ($k$-RIP)} with restricted isometric constant $\delta_{k} \in (0,1)$ if for every vector $\beta \in \mathbb{R}^p$ which is $k$-sparse it holds $$ (1-\delta_{k})\|\beta \|_2^2n \leq \|X \beta \|_2^2 \leq (1+\delta_{k}) \|\beta\|_2^2n.$$
\end{definition}

 A proof of the following theorem can be found in \cite{Baraniuk2008}.

\begin{theorem}\label{RIPthm}\cite{Baraniuk2008}
Let $n,k,p \in \mathbb{N}$ with $k \leq p$. Suppose $X \in \mathbb{R}^{n \times p }$ has i.i.d. standard Gaussian entries. Then for every $\delta>0$ there exists a constant $C=C_{\delta}>0$ such that if $n \geq Ck \log p$ then $X$ satisfies the $k$-RIP with restricted isometric constant $\delta_{k}<\delta$ w.h.p.
\end{theorem}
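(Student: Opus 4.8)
The plan is to prove this by the standard combination of a single‑vector concentration inequality, a covering (net) argument over each $k$‑dimensional coordinate subspace, and a union bound over all $\binom{p}{k}$ possible supports. First I would fix a support set $T \subseteq [p]$ with $|T|=k$ and write $X_T \in \mathbb{R}^{n\times k}$ for the submatrix of columns indexed by $T$. For a $\beta$ with $\mathrm{Support}(\beta)\subseteq T$ and $\|\beta\|_2=1$, each coordinate $(X\beta)_i=\sum_j X_{ij}\beta_j$ is $N(0,1)$ and these are independent across $i$, so $\|X\beta\|_2^2\sim\chi^2_n$. The $\chi^2$ tail bounds then give, for small $t>0$,
$$\pr\left[\left|\,\|X\beta\|_2^2-n\,\right|\ge tn\right]\le 2\exp\left(-c_0 t^2 n\right)$$
for a universal constant $c_0>0$. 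This controls a single fixed direction; the task is to upgrade it to all directions supported on $T$ simultaneously.

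Second, I would run the net argument. Let $\mathcal{N}$ be an $\epsilon$‑net of the unit sphere of the subspace $\{\beta:\mathrm{Support}(\beta)\subseteq T\}$, which can be taken with $|\mathcal{N}|\le (3/\epsilon)^k$. Applying the concentration bound with $t=\delta/2$ to every $q\in\mathcal{N}$ and union bounding, with high probability $\left|\,\|Xq\|_2^2-n\,\right|\le (\delta/2)n$ holds for all $q\in\mathcal{N}$ at once. A standard approximation argument transfers this from the net to the whole sphere: writing $A=X_T/\sqrt{n}$, one first bounds the operator norm $\|A\|_{\mathrm{op}}\le \sqrt{1+\delta/2}/(1-\epsilon)$, and then for any unit $\beta$ supported on $T$, approximating $\beta$ by its nearest net point $q$ (so $\|\beta-q\|_2\le\epsilon$) and using the triangle inequality yields both $\|X\beta\|_2^2\le(1+\delta)n$ and $\|X\beta\|_2^2\ge(1-\delta)n$, provided $\epsilon$ is chosen small enough in terms of $\delta$. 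Thus on this event $X_T$ satisfies the RIP bound with constant $\delta$ on $\mathbb{R}^T$.

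Third, I would union bound over supports. The failure probability for a fixed $T$ is at most $2(3/\epsilon)^k\exp(-c_0(\delta/2)^2 n)$, and since the number of supports is $\binom{p}{k}\le (ep/k)^k\le p^k$, the total failure probability is bounded by
$$\exp\left(k\log p + k\log(3/\epsilon) + \log 2 - c_0(\delta/2)^2 n\right).$$
With $\epsilon$ fixed as a function of $\delta$ only, the middle terms are $O(k)$ and are dominated by $k\log p$, so for $n\ge C k\log p$ with $C=C_\delta$ sufficiently large the exponent tends to $-\infty$, giving the $k$-RIP with $\delta_k<\delta$ w.h.p.

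The main obstacle is the net‑to‑sphere transfer: the naive triangle‑inequality bound on $\|X(\beta-q)\|_2$ involves the operator norm of $X_T$ itself, so one must set up the self‑referential inequality $\|A\|_{\mathrm{op}}\le\sqrt{1+\delta/2}+\epsilon\|A\|_{\mathrm{op}}$, solve for $\|A\|_{\mathrm{op}}$, and then carefully calibrate the net resolution $\epsilon$ together with the per‑point tolerance $\delta/2$ so that the resulting two‑sided bound over the entire sphere has constant at most $\delta$. The remaining ingredients — the $\chi^2$ tail estimate and the exponent bookkeeping in the union bound — are routine.
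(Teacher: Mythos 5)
Your proposal is correct and takes essentially the same route as the proof the paper defers to in \cite{Baraniuk2008}: a $\chi^2$ concentration bound for each fixed direction, an $\epsilon$-net of the unit sphere of each $k$-dimensional coordinate subspace combined with the self-referential operator-norm step to pass from the net to the whole sphere, and a union bound over the $\binom{p}{k}$ supports, with $n \geq C_{\delta} k \log p$ absorbing the $O(k)$ entropy terms. No gaps to report.
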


We need the following properties of RIP.

\begin{proposition}\label{properties}
Let $n,k,p \in \mathbb{N}$ with $k \leq p$. Suppose $X \in \mathbb{R}^{n \times p }$ satisfies the $k$-RIP with restricted isometric constant $\delta_{k} \in (0,1)$. Then for any $v,w \in \mathbb{R}^p$ which are $k$-sparse,
\begin{itemize}
\item[(1)] $$|(Xv)^{T} (Xw)| \leq  (1+\delta_{k})\|v\|_2\|w\|_2n \leq 2\|v\|_2\|w\|_2n.$$
\item[(2)] If $v,w$ have a common super-support of size $k$ then $$\|Xw\|_2^2+4\|v-w\|_2\|w\|_2n+2\|v-w\|_2^2n \geq \|Xv\|_2 \geq \|Xw\|_2^2-4\|v-w\|_2\|w\|_2n.$$
\item[(3)] If $v,w$ have disjoint supports and a common super-support of size $k$ then $$|(Xv)^{T} (Xw)| \leq   \delta_k\left(\|v\|_2^2+\|w\|_2^2 \right) n.$$
\end{itemize}
\end{proposition}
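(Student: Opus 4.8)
The plan is to derive all three inequalities directly from the two-sided $k$-RIP bound, together with Cauchy--Schwarz and the polarization identity; no part needs machinery beyond this. For part (1) the key point is that one \emph{cannot} apply RIP to $v+w$, since the sum of two $k$-sparse vectors may only be $2k$-sparse. Instead I would bound the inner product by Cauchy--Schwarz, $|(Xv)^T(Xw)| \le \|Xv\|_2\|Xw\|_2$, and then apply the upper RIP bound to each factor separately, namely $\|Xv\|_2 \le \sqrt{(1+\delta_k)n}\,\|v\|_2$ and likewise for $w$. Multiplying the two yields $(1+\delta_k)\|v\|_2\|w\|_2 n$, and since $\delta_k\in(0,1)$ we have $1+\delta_k<2$, giving the stated weakening.

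For part (2) I would expand, using $v = w + (v-w)$, the identity $\|Xv\|_2^2 = \|Xw\|_2^2 + 2(Xw)^T X(v-w) + \|X(v-w)\|_2^2$ (the middle quantity in the statement should read $\|Xv\|_2^2$). The hypothesis that $v$ and $w$ share a common super-support of size $k$ is precisely what guarantees that both $w$ and $v-w$ are $k$-sparse, so part (1) applies to the cross term, $|(Xw)^T X(v-w)| \le 2\|w\|_2\|v-w\|_2 n$, and the upper RIP bound applies to $\|X(v-w)\|_2^2 \le 2\|v-w\|_2^2 n$. Keeping the nonnegative term $\|X(v-w)\|_2^2$ and replacing the cross term by its most negative value gives the lower estimate $\|Xw\|_2^2 - 4\|v-w\|_2\|w\|_2 n$, while bounding both correction terms from above gives the upper estimate.

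For part (3) the disjoint-support assumption is what makes the argument go through. Because $v$ and $w$ have disjoint supports contained in a common set of size $k$, both $v+w$ and $v-w$ are $k$-sparse, and by orthogonality $\|v+w\|_2^2 = \|v-w\|_2^2 = \|v\|_2^2 + \|w\|_2^2$. I would then invoke the polarization identity $(Xv)^T(Xw) = \tfrac14\bigl(\|X(v+w)\|_2^2 - \|X(v-w)\|_2^2\bigr)$, apply the upper RIP bound to the first square and the lower RIP bound to the second, and use the equal norms to collapse the $(1\pm\delta_k)$ factors, obtaining $(Xv)^T(Xw) \le \tfrac{\delta_k}{2}(\|v\|_2^2+\|w\|_2^2)n$. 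The symmetric lower bound follows by exchanging the roles of the two squares, and since $\tfrac{\delta_k}{2}\le\delta_k$ the claimed absolute-value bound follows (in fact with an extra factor of $\tfrac12$ to spare).

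None of the steps is a genuine obstacle, as each reduces to a single application of RIP after an algebraic rearrangement. The only point requiring care is bookkeeping: in parts (2) and (3) the common-super-support hypothesis must be invoked explicitly, since it is exactly this assumption that certifies that the auxiliary vectors $v-w$, $v+w$ are themselves $k$-sparse and hence eligible for the $k$-RIP bounds; without it the whole scheme collapses back to the $2k$-sparse difficulty that forces the Cauchy--Schwarz detour in part (1).
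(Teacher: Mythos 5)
Your proposal is correct and follows essentially the same route as the paper's proof: Cauchy--Schwarz plus two applications of the upper RIP bound for part (1), the expansion $\|Xv\|_2^2=\|Xw\|_2^2+2(X(v-w))^T(Xw)+\|X(v-w)\|_2^2$ with part (1) applied to the cross term for part (2), and the polarization identity applied to the $k$-sparse vectors $v\pm w$ for part (3). You also correctly spot the typo in the statement of part (2) (the middle term should be $\|Xv\|_2^2$), and your explicit two-sided treatment of the absolute value in part (3) is slightly more careful than the paper's one-line version, which yields the same $\tfrac{\delta_k}{2}$ constant.
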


\begin{proof}
 The first part follows from the Cauchy-Schwarz inequality and the definiton of $k$-RIP applied to the vectors $v,w$.
 For the second part we write $Xv=X(w+(v-w))$, and we have $$\|Xv\|_2^2=\|Xw\|_2^2+2\left(X(v-w)\right)^T\left(Xw\right) +\|X(v-w)\|_2^2.$$ Since $v,w$ have a common super-support of size $k$, the vectors $v-w,w$ are $k$-sparse vectors. Hence from the first part we have \begin{equation*} -2\|v-w\|_2\|w\|_2n \leq |X(v-w)^TXw| \leq 2\|v-w\|_2\|w\|_2n \end{equation*}
 \begin{equation*}
 0 \leq \|X(v-w)\|_2^2 \leq 2\|v-w\|_2^2n.
 \end{equation*} Applying these inequalities to the last equality, the proof follows.

 For the third part since $v,w$ are $k$-sparse and have a common super-support of size $k$ the vectors $v+w$ and $v-w$ are $k$-sparse vectors. Hence by $k$-RIP and that $v,w$ have disjoint supports we obtain \begin{equation*}\|X(v+w)\|_2^2 \leq (1+\delta_{k})\|v+w\|_2^2n=(1+\delta_{k})\left(\|v\|_2^2+\|w\|_2^2\right)n\end{equation*} and similarly \begin{equation*}\|X(v-w)\|_2^2 \geq (1-\delta_{k})\left(\|v\|_2^2+\|w\|_2^2\right)n.\end{equation*} Hence
 \begin{align*}
 &|(Xv)^T(Xw)|=|\frac{1}{4}\left[\|X(v+w)\|_2^2-\|X(v-w)\|_2^2\right]|\\
 & \leq \frac{1}{4} |(1+\delta_{k})\left(\|v\|_2^2+\|w\|_2^2\right)n-(1-\delta_{k})\left(\|v\|_2^2+\|w\|_2^2\right)n |\\
 & \leq  \delta_k\left(\|v\|_2^2+\|w\|_2^2 \right) n,
 \end{align*}
 
 as required.
\end{proof}

Finally, we need the so-called Hanson-Wright inequality.  
 \begin{theorem}[Hanson-Wright inequality, \cite{hanson1971}]
 There exists a constant $d>0$ such that the following holds. Let $n \in \mathbb{N}, A \in \mathbb{R}^{n \times n}$ and $t \geq 0$. Then for a vector $X \in \mathbb{R}^n$ with i.i.d. standard Gaussian components

$$\mathbb{P}\left( 	| X^tAX-\E{X^tAX} |>t \right) \leq 2 \exp \left[-d \min \left( \frac{t^2}{\|A\|^2_{\text{F}}},\frac{t}{\|A\|}\right) \right].$$
\end{theorem}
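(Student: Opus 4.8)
The plan is to exploit the Gaussianity of $X$ through the spectral theorem, reducing the quadratic form to a weighted sum of independent centered chi-squares, and then run a Chernoff/Bernstein estimate. The key simplification, relative to the general subGaussian Hanson--Wright statement, is that for a standard Gaussian vector we may diagonalize $A$ and use rotational invariance, which sidesteps the decoupling machinery.

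First I would reduce to symmetric $A$. Since $X^TAX = X^TA_{\mathrm{sym}}X$ with $A_{\mathrm{sym}} := \tfrac12(A+A^T)$, and since $\|A_{\mathrm{sym}}\|_F \le \tfrac12(\|A\|_F + \|A^T\|_F) = \|A\|_F$ and $\|A_{\mathrm{sym}}\| \le \tfrac12(\|A\| + \|A^T\|) = \|A\|$, replacing $A$ by $A_{\mathrm{sym}}$ leaves the quadratic form unchanged and can only shrink the two norms on the right-hand side. Hence it suffices to treat symmetric $A$. Writing $A = U\Lambda U^T$ with $U$ orthogonal and $\Lambda = \mathrm{diag}(\lambda_1,\dots,\lambda_n)$, the vector $Z := U^TX$ is again i.i.d.\ standard Gaussian by rotational invariance, so $X^TAX = \sum_{i=1}^n \lambda_i Z_i^2$ and $\E{X^TAX} = \sum_i \lambda_i$. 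Therefore $S := X^TAX - \E{X^TAX} = \sum_i \lambda_i(Z_i^2 - 1)$ is a sum of independent centered terms, and the two controlling quantities are $\sum_i \lambda_i^2 = \|\Lambda\|_F^2 = \|A\|_F^2$ and $\max_i|\lambda_i| = \|\Lambda\| = \|A\|$, since both the Frobenius norm and the largest eigenvalue magnitude are invariant under orthogonal conjugation.

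Next I would carry out the sub-exponential moment estimate. Using the exact Gaussian identity $\E{e^{s(Z_i^2-1)}} = e^{-s}(1-2s)^{-1/2}$ for $s < \tfrac12$, together with the elementary bound $-s - \tfrac12\log(1-2s) \le 2s^2$ valid for $|s| \le \tfrac14$, each factor obeys $\E{e^{s\lambda_i(Z_i^2-1)}} \le e^{2\lambda_i^2 s^2}$ whenever $|\lambda_i s| \le \tfrac14$. By independence, for all $|s| \le 1/(4\|A\|)$ one gets $\E{e^{sS}} \le \exp\bigl(2s^2 \sum_i \lambda_i^2\bigr) = \exp\bigl(2s^2 \|A\|_F^2\bigr)$, exhibiting $S$ as sub-exponential with variance proxy $4\|A\|_F^2$ and scale $4\|A\|$.

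Finally I would optimize the Chernoff bound $\pr(S > t) \le \exp(-st + 2s^2\|A\|_F^2)$ over $0 < s \le 1/(4\|A\|)$, splitting into two regimes. When $t \le \|A\|_F^2/\|A\|$ the unconstrained optimizer $s^* = t/(4\|A\|_F^2)$ is admissible and yields $\exp\bigl(-t^2/(8\|A\|_F^2)\bigr)$; when $t > \|A\|_F^2/\|A\|$, the boundary choice $s = 1/(4\|A\|)$ yields $\exp\bigl(-t/(8\|A\|)\bigr)$ after absorbing the quadratic term using $\|A\|_F^2/\|A\| < t$. In either regime $\pr(S > t) \le \exp\bigl(-\tfrac18 \min(t^2/\|A\|_F^2,\, t/\|A\|)\bigr)$, and applying the same argument to $-S$ produces the factor $2$, giving the claim with $d = \tfrac18$. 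The only genuine computation is this two-regime split; the conceptual content is entirely in the symmetrization and the orthogonal-invariance reduction, which is what makes the Gaussian case elementary.
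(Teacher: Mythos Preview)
Your proof is correct and is the standard elementary argument for the Gaussian case of Hanson--Wright: symmetrize, diagonalize via rotational invariance, bound the MGF of each centered $\chi^2$ term, and run a two-regime Chernoff optimization. The constants are handled cleanly; in particular the inequality $-s-\tfrac12\log(1-2s)\le 2s^2$ on $|s|\le\tfrac14$ follows from $f'(u)=2u/(1-2u)\le 4u$ on $[0,\tfrac14]$ (and an even tighter bound for $u<0$), and the boundary case in the Chernoff step is checked correctly.

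There is nothing to compare against in the paper: the theorem is quoted from \cite{hanson1971} without proof and used as a black box in the proof of Lemma~\ref{Hanson_app}. Your argument is therefore strictly additional content. Note that your proof only covers (and only needs to cover) the i.i.d.\ standard Gaussian case stated in the paper; the full Hanson--Wright inequality for general subGaussian vectors requires a decoupling step that your rotational-invariance shortcut deliberately and legitimately avoids.
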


\subsection{Key Propositions on the Local Structure of $(\tilde{\Phi}_2)$}

To establish the Theorems \ref{OGP}, \ref{nolocal} and \ref{LSA} we need to obtain certain structural results on the local minima of $(\tilde{\Phi}_2)$.

The central object of interest is what we name as a $\alpha$-deviating local minimum ($\alpha$-DLM).
\begin{definition}
Let $n,p \in \mathbb{N}$,$\alpha \in (0,1),X \in \mathbb{R}^{n \times p}$ and $\emptyset \not = S_1,S_2,S_3 \subseteq [p]$. A triplet of vectors $(a,b,c)$ with $a,b,c \in \mathbb{R}^p$ is called an $\alpha$-\textbf{deviating local minimum ($\alpha$-D.L.M.)} with respect to $S_1,S_2,S_3$ and to the matrix $X$ if the following are satisfied: \begin{itemize}
\item The sets $S_1,S_2,S_3$ are pairwise disjoint and the vectors $a,b,c$ have super-supports $S_1,S_2,S_3$ respectively.
\item $|S_1|=|S_2| $ and $|S_1|+|S_2|+|S_3| \leq 3k$.
\item For all $i \in S_1$ and $j \in S_2$
\begin{equation}\label{eq:dfndlm}
\|\left(Xa-a_iX_i\right)+\left(Xb-b_jX_j\right)+Xc\|_2^2  \geq \|Xa+Xb+Xc\|_2^2-\alpha\left(\frac{\|a\|_2^2}{|S_1|}+\frac{\|b\|_2^2}{|S_2|}\right)n. 
\end{equation}
\end{itemize}
\end{definition}

\begin{remark}
 In several cases in what follows we call a triplet $(a,b,c)$ an \textbf{$\alpha$-DLM} with respect to a matrix $X$ without explicitly referring to their corresponding super-sets $S_1,S_2,S_3$ but we do always assume their existence. 
\end{remark}

We establish several results on $\alpha$-DLMs. We start with the following algebraic claim for the DLM property.
\begin{claim}\label{equivalence}
Let $n,p,k \in \mathbb{N}$ with $k \leq \frac{1}{3}p$. Suppose a matrix $X \in \mathbb{R}^{n \times p}$ satisfies the $3k$-RIP for some isometric constant $\delta_{3k} \in (0,1)$ and that for some $\alpha \in (0,1)$ a triplet $(a,b,c)$ is an $\alpha$-D.L.M. with respect to $X$. Then
 $$\|X\left(a+b\right)\|_2^2+2(Xc)^T\left(X(a+b)\right) \leq \left(\alpha+4\delta_{3k}\right)\left(\|a\|_2^2+\|b\|_2^2\right)n. $$
 \end{claim}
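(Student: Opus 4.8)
The plan is to average the defining inequality \eqref{eq:dfndlm} over all pairs $(i,j)\in S_1\times S_2$. Write $u:=Xa+Xb+Xc$ and $s:=|S_1|=|S_2|\ge 1$ (recall $S_1\neq\emptyset$). Expanding the square on the left of \eqref{eq:dfndlm} and cancelling the common term $\|u\|_2^2$, each pair $(i,j)$ gives
\[
-2\,u^T(a_iX_i+b_jX_j)+\|a_iX_i+b_jX_j\|_2^2\ \ge\ -\frac{\alpha n}{s}\left(\|a\|_2^2+\|b\|_2^2\right).
\]
Summing over the $s^2$ pairs and using $\sum_{i\in S_1}a_iX_i=Xa$, $\sum_{j\in S_2}b_jX_j=Xb$, the linear part evaluates to $-2s\,u^TX(a+b)$, while in the quadratic part the cross term collapses via $\sum_{i,j}a_ib_jX_i^TX_j=(Xa)^T(Xb)$, leaving $s\big(\sum_{i\in S_1}a_i^2\|X_i\|_2^2+\sum_{j\in S_2}b_j^2\|X_j\|_2^2\big)+2(Xa)^T(Xb)$. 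Dividing by $s$, rearranging, and using $u^TX(a+b)=\|X(a+b)\|_2^2+(Xc)^TX(a+b)$ yields
\[
2\|X(a+b)\|_2^2+2(Xc)^TX(a+b)\ \le\ \sum_{i\in S_1}a_i^2\|X_i\|_2^2+\sum_{j\in S_2}b_j^2\|X_j\|_2^2+\tfrac{2}{s}(Xa)^T(Xb)+\alpha n\left(\|a\|_2^2+\|b\|_2^2\right).
\]

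The main subtlety is passing from this bound, whose left side carries $2\|X(a+b)\|_2^2$, to the claim, whose left side carries only $\|X(a+b)\|_2^2$. One must resist the temptation to drop the extra copy using $\|X(a+b)\|_2^2\ge 0$: that is far too lossy, since by RIP $\|X(a+b)\|_2^2$ is itself of order $n(\|a\|_2^2+\|b\|_2^2)$, the same order as the terms $\sum_i a_i^2\|X_i\|_2^2$ on the right. Instead I would subtract exactly one copy of $\|X(a+b)\|_2^2$ from both sides and expand it as $\|Xa\|_2^2+2(Xa)^T(Xb)+\|Xb\|_2^2$, so that the claim's left side is bounded by
\[
\big(\textstyle\sum_{i}a_i^2\|X_i\|_2^2-\|Xa\|_2^2\big)+\big(\sum_{j}b_j^2\|X_j\|_2^2-\|Xb\|_2^2\big)+\big(\tfrac{2}{s}-2\big)(Xa)^T(Xb)+\alpha n\left(\|a\|_2^2+\|b\|_2^2\right).
\]
The gain is that each grouped difference is now $O(\delta_{3k})$ rather than $O(1)$, because a large common order-$n(\|a\|_2^2+\|b\|_2^2)$ contribution has been cancelled.

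To finish I would invoke the two-sided $3k$-RIP. Since each $e_i$ is $1$-sparse we have $(1-\delta_{3k})n\le\|X_i\|_2^2\le(1+\delta_{3k})n$, and $a,b$ are $3k$-sparse, so $\sum_i a_i^2\|X_i\|_2^2-\|Xa\|_2^2\le(1+\delta_{3k})n\|a\|_2^2-(1-\delta_{3k})n\|a\|_2^2=2\delta_{3k}n\|a\|_2^2$, and likewise $2\delta_{3k}n\|b\|_2^2$ for the $b$-group. For the cross term, $a$ and $b$ have disjoint supports and a common super-support of size $\le 2k$, which I extend to size $3k\le p$; Proposition \ref{properties}(3) then gives $|(Xa)^T(Xb)|\le\delta_{3k}(\|a\|_2^2+\|b\|_2^2)n$, and since $|\tfrac{2}{s}-2|\le 2$ this term contributes at most $2\delta_{3k}(\|a\|_2^2+\|b\|_2^2)n$. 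Adding the $\alpha n(\|a\|_2^2+\|b\|_2^2)$ piece, the four contributions sum to exactly $(\alpha+4\delta_{3k})(\|a\|_2^2+\|b\|_2^2)n$, which is the claim. The only genuine obstacle is the one flagged above: recognizing that $\|X(a+b)\|_2^2$ must be subtracted and re-expanded rather than discarded, so that the two-sided RIP produces the $O(\delta_{3k})$ cancellation instead of an uncontrolled $O(1)$ term.
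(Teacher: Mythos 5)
Your proof is correct and takes essentially the same route as the paper's: both sum the defining inequality \eqref{eq:dfndlm} over all pairs in $S_1\times S_2$, cancel one copy of $\|X(a+b)\|_2^2$ by expanding it as $\|Xa\|_2^2+2(Xa)^T(Xb)+\|Xb\|_2^2$, and then bound the two diagonal differences $\sum_i a_i^2\|X_i\|_2^2-\|Xa\|_2^2$ (and the analogue for $b$) via the two-sided $3k$-RIP and the cross term via Proposition \ref{properties}(3), each contributing $2\delta_{3k}(\cdot)n$. The paper packages the identical cancellation as the quantities $S$ and $T$ bounded below by $-2\delta_{3k}\left(\|a\|_2^2+\|b\|_2^2\right)n$, so there is no substantive difference.
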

\begin{proof}
Let $S_1,S_2,S_3$ the super-sets of the vectors $a,b,c$ with respect to which the triplet $(a,b,c)$ is an $\alpha$-DLM. Set $m:=|S_1|=|S_2|$. Based on the definition of an $\alpha$-DLM by expanding the squared norm in the left hand side of (\ref{eq:dfndlm}) we have that  $\forall i \in S_1, j \in S_2$ it holds $$a_i^2\|X_i\|_2^2+b_j^2\|X_j\|_2^2+2a_ib_jX_i^T X_j-2\left(Xa+Xb+Xc \right)^T \left(a_iX_i+b_jX_j \right) \geq -\alpha\left(\frac{\|a\|_2^2}{m}+\frac{\|b\|_2^2}{m}\right)n.$$Summing over all $i \in S_1,j \in S_2$ we obtain $$\sum_{i \in S_1,j \in S_2} \left[ a_i^2\|X_i\|_2^2+b_j^2\|X_j\|_2^2+2a_ib_jX_i^T X_j-2\left(Xa+Xb+Xc \right)^T  \left(a_iX_i+b_jX_j \right) \right] \geq -m\alpha\left(\|a\|_2^2+\|b\|_2^2\right)n$$
which equivalently gives
$$m \sum_{i \in S_1}  a_i^2\|X_i\|_2^2 +m \sum_{j \in S_2} b_j^2\|X_j\|_2^2 + 2(Xa)^T(Xb)-2m\left(Xa+Xb+Xc \right)^T \left(Xa+Xb \right) \geq -m\alpha\left(\|a\|_2^2+\|b\|_2^2\right)n$$ which after rearranging and multiplying with $-\frac{1}{m}$ implies that the quantity
\begin{align*}
&\|X\left(a+b\right)\|_2^2+2(Xc)^T\left(X(a+b)\right)+2\underbrace{\left(1-\frac{1}{m}\right)(Xa)^T(Xb)}_{S}\\
&+\underbrace{\left[\|Xa\|_2^2-\sum_{i \in S_1}a_i^2 \|X_i\|_2^2\right]+\left[\|Xb\|_2^2-\sum_{j \in S_2}b_j^2 \|X_j\|_2^2\right]}_{T}
\end{align*} is at most $\alpha\left(\|a\|_2^2+\|b\|_2^2\right)n$. To finish the proof it suffices to establish that $S,T$ are both bounded from below by $-2\delta_{3k}\left(\|a\|_2^2+\|b\|_2^2\right)n$. We start with bounding $S$. The vectors $a,b$ have disjoint supports which sizes sum up to at most $3k$. In particular, the union of their supports is a common super-support of them of size at most $3k$. Hence we can apply part (3) of Proposition \ref{properties} to get
\begin{align*}
S&=2\left(1-\frac{1}{m}\right) (Xa)^T(Xb) \geq -2\delta_{3k}\left(1-\frac{1}{m}\right)\left(\|a\|_2^2+\|b\|_2^2\right)n \geq -2\delta_{3k}\left(\|a\|_2^2+\|b\|_2^2\right)n.
\end{align*}

For $T$ it suffices to prove that $\left[\|Xa\|_2^2-\sum_{i \in S_1}a_i^2 \|X_i\|_2^2\right] \geq -2\delta_{3k}\|a\|_2^2n$ and since the same will hold for $b$ by symmetry, by summing the inequalities we will be done. Note that as $a$ and all the standard basis vectors are $3k$-sparse vectors by $3k$-RIP for $X$ we have $\|Xa\|_2^2 \geq (1-\delta_{3k})\|a\|_2^2n$ and secondly $\|X_i\|_2^2 \leq (1+\delta_{3k})n$, for all $i \in [p]$.
Combining we obtain
\begin{align*}
&\left[\|Xa\|_2^2-\sum_{i \in S_1}a_i^2 \|X_i\|_2^2\right] \geq \left[(1-\delta_{3k})\|a\|_2^2n-(1+\delta_{3k})\sum_{i \in S_1}a_i^2 n\right] = -2\delta_{3k}\|a\|_2^2n.
\end{align*}

The proof is complete.
\end{proof}

We now state two key properties for D.L.M. triplets. We present the proof of the first Proposition here, as it is rather short. We defer the proof of the second Proposition to Subection \ref{proofProp2} because of its length.

\begin{proposition}\label{prop1}
Let $n,p,k \in \mathbb{N}$ with $k \leq \frac{1}{3}p$. Suppose that $X \in \mathbb{R}^{n \times p}$ satisfies the $3k$-RIP with restricted isometric constant $\delta_{3k}<\frac{1}{12}$. Then there is no $\frac{1}{4}$-D.L.M. triplet $(a,b,c)$ with respect to the matrix $X$ with $\|a\|_2^2+\|b\|_2^2 \geq \frac{1}{4} \|c\|_2^2.$
\end{proposition}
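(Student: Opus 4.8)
The plan is to argue by contradiction. Suppose $(a,b,c)$ is a $\tfrac14$-D.L.M. with respect to $X$, with associated pairwise disjoint super-supports $S_1,S_2,S_3$ satisfying $|S_1|+|S_2|+|S_3|\le 3k$, and suppose in addition that $\|a\|_2^2+\|b\|_2^2\ge \tfrac14\|c\|_2^2$. Write $P:=\|a\|_2^2+\|b\|_2^2$ and $Q:=\|c\|_2^2$, so the extra hypothesis reads $Q\le 4P$. The idea is that Claim \ref{equivalence} already furnishes an \emph{upper} bound on the quantity $\|X(a+b)\|_2^2+2(Xc)^T(X(a+b))$, while the $3k$-RIP furnishes a matching \emph{lower} bound; requiring the two to be compatible pins $\delta_{3k}$ above an absolute constant, contradicting $\delta_{3k}<\tfrac1{12}$.

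First I would invoke Claim \ref{equivalence} with $\alpha=\tfrac14$, which applies since $X$ satisfies $3k$-RIP and $(a,b,c)$ is a $\tfrac14$-D.L.M., giving
\[
\|X(a+b)\|_2^2+2(Xc)^T(X(a+b))\ \le\ \Bigl(\tfrac14+4\delta_{3k}\Bigr)Pn .
\]
Next I would lower bound the same expression. The cleanest route uses the identity $\|X(a+b)\|_2^2+2(Xc)^T(X(a+b))=\|X(a+b+c)\|_2^2-\|Xc\|_2^2$. Since $S_1,S_2,S_3$ are disjoint, $a,b,c$ have pairwise disjoint supports inside a common super-support of size at most $3k$, so $a+b+c$ and $c$ are both $3k$-sparse and $\|a+b+c\|_2^2=P+Q$. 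The $3k$-RIP then yields $\|X(a+b+c)\|_2^2\ge (1-\delta_{3k})(P+Q)n$ and $\|Xc\|_2^2\le (1+\delta_{3k})Qn$, whence the expression is at least $\bigl((1-\delta_{3k})(P+Q)-(1+\delta_{3k})Q\bigr)n=\bigl((1-\delta_{3k})P-2\delta_{3k}Q\bigr)n$. Alternatively one reaches a comparable bound by estimating $\|X(a+b)\|_2^2\ge(1-\delta_{3k})Pn$ directly and controlling the cross term through part (3) of Proposition \ref{properties}, using that $c$ and $a+b$ have disjoint supports inside a super-support of size at most $3k$.

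Combining the two bounds and cancelling the common factor $n$ gives $(1-\delta_{3k})P-2\delta_{3k}Q\le(\tfrac14+4\delta_{3k})P$, i.e. $(\tfrac34-5\delta_{3k})P\le 2\delta_{3k}Q$. In the degenerate subcase $P=0$ one has $a=b=0$ and, by $Q\le 4P$, also $c=0$, a trivial triplet that I would simply exclude; so I may assume $P>0$. Feeding in $Q\le 4P$ and dividing by $Pn>0$ then reduces everything to a single scalar inequality of the form $\tfrac34\le K\,\delta_{3k}$ for an explicit absolute constant $K$, which is impossible once $\delta_{3k}$ falls below an absolute threshold; a tight accounting of the constants is what shows the stated bound $\delta_{3k}<\tfrac1{12}$ already suffices to force the contradiction.

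The hard part will be precisely this constant bookkeeping: the numeric threshold on $\delta_{3k}$ depends delicately on which form of the RIP estimates is fed in — the loose $\delta_{3k}(\|v\|_2^2+\|w\|_2^2)n$ versus the sharper polarization bound emerging from the proof of Proposition \ref{properties}(3), and likewise the $4\delta_{3k}$ slack carried by Claim \ref{equivalence}. To land below the reciprocal constant certifying the $\tfrac1{12}$ threshold one must apply these estimates in their tight disjoint-support form and avoid double-counting the $\delta_{3k}$ contributions; everything else is routine algebra. A secondary point to verify is that every vector entering a RIP estimate genuinely shares a common super-support of size at most $3k$, so that the single isometric constant $\delta_{3k}$ governs all of the bounds at once.
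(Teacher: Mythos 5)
Your skeleton coincides with the paper's own proof: both invoke Claim \ref{equivalence} with $\alpha=\frac14$ to get the upper bound $\left(\frac14+4\delta_{3k}\right)\left(\|a\|_2^2+\|b\|_2^2\right)n$, and then lower bound the same quantity via the $3k$-RIP. Writing $P:=\|a\|_2^2+\|b\|_2^2$ and $Q:=\|c\|_2^2$, your polarization identity $\|X(a+b)\|_2^2+2(Xc)^T\left(X(a+b)\right)=\|X(a+b+c)\|_2^2-\|Xc\|_2^2$ is in fact a slightly sharper packaging than the paper's term-by-term estimate (the paper bounds $\|X(a+b)\|_2^2\ge(1-\delta_{3k})Pn$ and the cross term via Proposition \ref{properties}(3), arriving at $\left(\frac34-7\delta_{3k}\right)P\le 2\delta_{3k}Q$, whereas your route gives $\left(\frac34-5\delta_{3k}\right)P\le 2\delta_{3k}Q$).

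However, the step you defer as ``routine algebra'' is precisely where the argument does not close. Your own inequality, combined with $Q\le 4P$ and $P>0$, yields $\frac34-5\delta_{3k}\le 8\delta_{3k}$, i.e.\ $\delta_{3k}\ge\frac{3}{52}\approx 0.058$. Since $\frac{3}{52}<\frac1{12}\approx 0.083$, no contradiction arises for $\delta_{3k}\in\left[\frac{3}{52},\frac1{12}\right)$: with the $4\delta_{3k}$ slack fixed by Claim \ref{equivalence} and the unavoidable $2\delta_{3k}Q$ cross term, no rearrangement of these same estimates reaches the stated threshold $\frac1{12}$, so as sketched your proof certifies the proposition only under $\delta_{3k}<\frac{3}{52}$ — your assertion that tight accounting lands at $\frac1{12}$ is the genuine gap. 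To be fair, the paper's own computation is loose at exactly the same spot: its display (\ref{equiavelent2}) drops a factor of $2$ (the right-hand side should be $2\delta_{3k}\|c\|_2^2$), and its final step infers $\sqrt{P}<\frac12\|c\|_2$ from $\left(\frac34-7\delta_{3k}\right)P\le\delta_{3k}Q$ together with $\frac34-7\delta_{3k}>2\delta_{3k}$, which only gives $P<\frac12 Q$; with the factor of $2$ restored the paper's route needs $\delta_{3k}<\frac1{20}$. The discrepancy is harmless downstream, since Theorem \ref{RIPthm} supplies an arbitrarily small $\delta_{3k}$ at the cost of a larger constant $C$ in $n\ge Ck\log p$, but a rigorous writeup of your argument should state the threshold $\frac{3}{52}$ (or tighten Claim \ref{equivalence}) rather than claim $\frac1{12}$ suffices.
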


\begin{proof}

By Lemma \ref{equivalence} any $\frac{1}{4}$-D.L.M. triplet satisfies $$\|X\left(a+b\right)\|_2^2+2(Xc)^T\left(X(a+b)\right) \leq \left(\frac{1}{4}+4\delta_{3k}\right)\left(\|a\|_2^2+\|b\|_2^2\right)n.$$ But using the $3k$-R.I.P. for $X$ and that $a,b,c$ have disjoint supports with sizes summing up to at most $3k$ we get the following two inequalities from Proposition (\ref{properties});
\begin{itemize}
\item $\|X(a+b)\|_2^2 \geq (1-\delta_{3k}) \left( \|a+b\|_2^2\right)n=(1-\delta_{3k}) \left( \|a\|_2^2+\|b\|_2^2\right)n$, since $a+b$ is $3k$-sparse and $a,b$ have disjoint supports.

\item $(Xc)^T(X(a+b)) \geq -\delta_{3k}\left(\|c\|_2^2+\|a\|_2^2+\|b\|_2^2 \right)$, from Proposition \ref{properties} (3).
\end{itemize}

We obtain 
$$(1-\delta_{3k})\left( \|a\|_2^2+\|b\|_2^2\right)-2\delta_{3k}\left(\|c\|_2^2+\|a\|_2^2+\|b\|_2^2 \right)$$
is at most $ \left( \frac{1}{4}+4\delta_{3k} \right) \left( \|a\|_2^2+\|b\|_2^2\right).$ But now, this inequality can be equivalently written as 
\begin{equation}\label{equiavelent2} \left( \frac{3}{4}-7 \delta_{3k} \right)\left(\|a\|_2^2+\|b\|_2^2\right) \leq  \delta_{3k} \|c\|_2^2 .
\end{equation}Now we use that for $\delta_{3k}< \frac{1}{12}$ it holds $\frac{3}{4}-7 \delta_{3k} > 2 \delta_{3k}$. Using this in (\ref{equiavelent2}) we conclude that $\sqrt{\|a\|_2^2+\|b\|_2^2} < \frac{1}{2} \|c\|_2$ and the proof of the proposition is complete. 

\end{proof}

The second property of D.L.M. triplets we want is the following.

\begin{proposition}\label{prop2}
Let $n,p,k \in \mathbb{N}$ with $k \leq \frac{1}{3}p$. Suppose $X \in \mathbb{R}^{n \times p}$ has i.i.d. $N(0,1)$ entries. There exists constants $c_1,C_1>0$ such that if $n \geq C_1k \log p$ then w.h.p. there is no $\frac{1}{4}$-D.L.M. triplet $(a,b,c)$ with respect to the some sets $\emptyset \not = S_1,S_2,S_3 \subset [p]$ and the matrix $X$ such that the following conditions are satisfied. \begin{itemize}
\item[(1)] $|a|_{\min}:=\min \{ |a_i| : a_i \not = 0\} \geq 1$.
\item[(2)] $S_1 \cup S_3=[k] \cup \{p\}$, $p \in S_3$ and $S_1=\mathrm{Support}(a)$.
\item[(3)] $ \|a\|_2^2+\|b\|_2^2+\|c\|_2^2 \leq c_1 \min \{\frac{\log p}{\log( \log p)},k\}$.
\end{itemize} 
\end{proposition}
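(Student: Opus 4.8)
My plan is to argue by contradiction: assuming a $\tfrac14$-D.L.M. triplet $(a,b,c)$ satisfying (1)--(3) exists, I will exhibit a single admissible swap that strictly decreases the objective by more than the slack $\tfrac14(\|a\|_2^2/|S_1|+\|b\|_2^2/|S_2|)n$ permitted in the definition, contradicting the D.L.M. property. Writing $R:=Xa+Xb+Xc$ for the residual, $u:=a+b+c$, and $m:=|S_1|=|S_2|$, the defining inequality for a pair $(i,j)\in S_1\times S_2$ unpacks (expanding the square and using $X_\ell^TR=u_\ell\|X_\ell\|_2^2+E_\ell$) into the statement that the gain
\begin{equation*}
\Delta_{ij}:=a_i^2\|X_i\|_2^2+b_j^2\|X_j\|_2^2+2a_iE_i+2b_jE_j-2a_ib_jX_i^TX_j,\qquad E_\ell:=X_\ell^TX(u-u_\ell e_\ell),
\end{equation*}
satisfies $\Delta_{ij}\le\tfrac14\frac{\|a\|_2^2+\|b\|_2^2}{m}n$ for \emph{every} pair. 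Thus it suffices to produce one pair with $\Delta_{ij}$ above this threshold. The engine is condition (1): since $|a_i|\ge1$, the RIP bound (Theorem~\ref{RIPthm}, Proposition~\ref{properties}) gives $a_i^2\|X_i\|_2^2\ge(1-\delta_{3k})n$, so correcting any single coordinate of $a$ buys an $\Omega(n)$ decrease, provided the correlation errors $E_i,E_j$ are controlled.

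The heart of the proof is the uniform control of these errors, where the index decomposition dictated by condition (2) is essential. Because $\mathrm{Support}(a)=S_1$ and $\mathrm{Support}(c)=S_3$ both lie in the \emph{fixed} block $[k]\cup\{p\}$ while $\mathrm{Support}(b)=S_2$ lies in the complementary \emph{free} block, I would split, for $i\in S_1$, $E_i=X_i^TX(a-a_ie_i)+X_i^TXb+X_i^TXc$ and bound each piece separately. Correlations that stay inside the fixed block (those involving $a$ and $c$) are controlled by the operator norm of the off-diagonal Gram matrix of the fixed $k+1$ columns, which is $O(\sqrt{nk})$ with \emph{no} $\log p$ factor, since no union bound over supports is required there; correlations reaching the free block (those involving $b$) are controlled, uniformly over the $\binom{p}{m}$ choices of $S_2$ and an $\varepsilon$-net of admissible values, by the Hanson--Wright inequality and a cross-Gram estimate, at the cost of an extra $\sqrt{\log p}$. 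Crucially, the large, noise-carrying $c$ (with $\|c\|_2^2$ as large as $c_1\log p/\log\log p$) is only ever paired against fixed-block columns, so its contribution to $2a_iE_i$ is $O(|a_i|\sqrt{nk}\,\|c\|_2)$; condition (3) in the form $\|c\|_2^2\le c_1\min\{\log p/\log\log p,\,k\}$ is exactly what forces $k\|c\|_2^2\ll n$ (via $n\ge C_1k\log p$ and the $\min$), making this term $o(n)$ and hence dominated by the gain. The same $\min$, with $|a_i|\ge1$ capping $m\le\|a\|_2^2\le c_1k$, keeps the union-bound entropy $m\log p$ small enough for the Hanson--Wright tails to absorb it.

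This leaves the genuinely delicate point, which I expect to be the main obstacle: the correlation between the large vector $c$ and the \emph{free}-block perturbation $b$. The naive uniform bound $|(Xc)^TXb|\lesssim\sqrt{nk\log p}\,\|b\|_2\|c\|_2$ --- all that the aggregate inequality of Claim~\ref{equivalence} can exploit --- is too weak precisely in the regime $\|a\|_2^2+\|b\|_2^2<\tfrac14\|c\|_2^2$ left open by Proposition~\ref{prop1}, where $\sqrt{nk\log p}\,\|b\|_2\|c\|_2$ can exceed $\tfrac14(\|a\|_2^2+\|b\|_2^2)n$. Hence one cannot route through Claim~\ref{equivalence}; one must use the full family of per-coordinate inequalities and the sizes the D.L.M. property imposes on the coordinates of $b$, rather than a worst-case $b$. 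Concretely, I would choose the swap index $j\in S_2$ so that $b_j^2$ is at least the average $\|b\|_2^2/m$ (covering the $b$-part of the threshold) while $|X_j^TXc|$ is simultaneously near its \emph{typical} value $O(\sqrt n\,\|c\|_2)$ rather than its maximum $O(\sqrt{nk\log p}\,\|c\|_2)$; such a $j$ exists because $\sum_{j\in S_2}(X_j^TXc)^2=\|X_{S_2}^T(Xc)\|_2^2$ is only $O(mn\|c\|_2^2)$, so the $c$-correlation is small for a positive fraction of the coordinates. Turning this averaging heuristic into an estimate valid uniformly over all $X$-dependent configurations, through a union bound whose entropy is tamed by condition (3), is the crux; with it, the selected pair $(i,j)$ yields $\Delta_{ij}>\tfrac14\frac{\|a\|_2^2+\|b\|_2^2}{m}n$, the desired contradiction.
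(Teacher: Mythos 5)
Your proposal has a genuine gap, and it sits exactly where you flag it: the selection of a swap pair $(i,j)$ for which $b_j^2\geq \|b\|_2^2/m$ \emph{and} $|X_j^TXc|$ is near its typical value $O(\sqrt{n}\,\|c\|_2)$, uniformly over all $X$-dependent configurations. The supporting estimate you invoke, $\sum_{j\in S_2}(X_j^TXc)^2=O(mn\|c\|_2^2)$, is not valid uniformly: since $S_2$ ranges over all $\binom{p}{m}$ subsets of the free block and may be chosen adversarially after seeing $X$ (as may $b$ and $c$), an adversary can take $S_2$ to be the $m$ columns most aligned with $Xc$, inflating the sum to order $mn\log p\,\|c\|_2^2$; moreover the two requirements on $j$ ("$b_j$ large" and "correlation typical") each hold only for a fraction of indices, and the adversary sets $b$ precisely to misalign them. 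So the averaging heuristic does not survive the union bound you would need, and you correctly identify this as an unresolved crux rather than a proved step. The argument as written is therefore incomplete.

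More importantly, the reason you give for abandoning the aggregate route is a misreading of what Claim \ref{equivalence} can exploit. The paper's proof does go through Claim \ref{equivalence}: its Lemma \ref{Hanson_app} applies the Hanson--Wright inequality to the quadratic form in $X_1=X\bigl((a+b)/\sqrt{\|a\|_2^2+\|b\|_2^2}\bigr)$ and $W_1=X(c/\|c\|_2)$ for a \emph{fixed} triplet, where the cross term $(Xc)^TX(a+b)$ concentrates at its typical scale $O(\sqrt{n}\,\|b+a\|_2\|c\|_2)$ with no $\sqrt{k\log p}$ loss, yielding failure probability $2\exp\bigl(-c_0n\min\{1,(\|a\|_2^2+\|b\|_2^2)/\|c\|_2^2\}\bigr)$. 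The $\log p$ entropy is paid afterwards, via a packing net $Q_{r,1-\delta}(T)$ and a union bound over the $\binom{k}{m}\binom{p}{m}$ admissible support patterns, together with a stability step showing that near any $\frac14$-D.L.M. with $|a|_{\min}\geq 1$ there is a net point that is a $\frac12$-D.L.M. with $|l|_{\min}\geq\frac12$. Condition (1) then forces $\|a\|_2^2\geq m/4$ on the net, so the exponent is at least of order $n\min\{1,m/r^2\}\geq C_1k\log p\,\min\{1,m/r^2\}$, which dominates the entropy $m\log p+(2k+1)\log(Cr/\delta)$ precisely because condition (3) caps $r^2\leq c_1\min\{\log p/\log\log p,\,k\}$. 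In particular the regime $\|a\|_2^2+\|b\|_2^2<\frac14\|c\|_2^2$ left open by Proposition \ref{prop1}, which you claim defeats the aggregate inequality, is handled there: the degraded exponent $nm/r^2$ still beats the entropy. Had you run the fixed-triplet-plus-net computation with Claim \ref{equivalence}, you would not have needed the per-coordinate selection argument at all.
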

The proof is deferred to Subection \ref{proofProp2}.

\subsection{Proof of Theorems \ref{OGP}, \ref{nolocal} and \ref{LSA}}
We first prove Theorem \ref{LSA} and then we show how it implies Theorems \ref{OGP} and \ref{nolocal}.

\begin{proof}[Proof of Theorem \ref{LSA}]

Let $X'$ be an $n \times \left( p+1\right)$ matrix such that for all $i \in [n], j \in [p]$ it holds $X'_{i,j}=X_{i,j}$ and for $i \in [n],j=p+1$, $X'_{i,p+1}:=\frac{1}{\sigma} W_i$. In words, we create $X'$ by augmenting $X$ with the rescaled $\frac{1}{\sigma}W$ as an extra column. Note that $X'$ has iid standard Gaussian entries and furthermore $Y=X\beta^*+W=X'\begin{bmatrix}
           \beta^* \\
           \sigma
         \end{bmatrix}.$

 Notice that the performance of our algorithm is invariant with respect to rescaling of the quantities $Y,\beta^*,\sigma,\beta_0$ by a scalar. In particular by rescaling $Y=X\beta^*+W$ with $\frac{1}{|\beta^*|_{\min}}$ we can replace $Y$ by $\frac{Y}{|\beta^*|_{\min}}$, $\beta^*$ with $\frac{\beta^*}{|\beta^*|_{\min}}$, $\sigma^2$ by $\frac{\sigma^2}{|\beta^*|^2_{\min}}$ and finally $\beta_0$ by $\frac{\beta_0}{|\beta^*|^2_{\min}}$ and thus we may assume for our proof that $|\beta^*|_{\min}=1$. Notice that in this case our desired upper bound on the running time remains $4 k\frac{\|Y-X\beta_0\|_2^2}{\sigma^2 n} $ and our assumptions on the variance of the noise is now simply $\sigma^2 \leq c \min\{\frac{\log p}{\log \log p},k\}$ for some $c>0$.

Recall that the desired output of the algorithm are vectors $\hat{\beta}$ satisfying the following termination conditions.\\

\textbf{Termination Conditions:}
\begin{itemize}

\item[(TC1)] $\mathrm{Support}\left( \hat{\beta} \right)=\mathrm{Support}\left( \beta^* \right)$ and,
\item[(TC2)] $\|\hat{\beta}-\beta^*\|_2 \leq \sigma$.
\end{itemize}

We start with the following deterministic claim.

\begin{claim}\label{begin} Assume that the algorithm LSA has the following property. For any $k$-sparse $\beta$ which violates at least one of (TC1),(TC2) we have $\|Y-X\beta'\|_2^2 \leq \|Y-X\beta\|_2^2-\frac{\sigma^2}{4k}n$, where $\beta'$ is obtained from $\beta$ in one iteration of the LSA. Then the algorithm LSA terminates for any $k$-sparse vector $\beta_0$ as input in at most  $4 k\frac{\|Y-X\beta_0\|_2^2}{\sigma^2n} $ iterations with an output vector $\beta$ satisfying both conditions $(TC1),(TC2)$.
\end{claim}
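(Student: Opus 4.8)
The plan is to treat the squared residual $\Phi(\beta) := \|Y - X\beta\|_2^2 \ge 0$ as a monovariant and run a standard potential-function argument. First I would record the two facts that drive everything. \emph{Strict descent}: by Step~3 of LSA the current vector is replaced by $\beta'$ only when $\|Y-X\beta'\|_2 < \|Y-X\beta\|_2$, so every iteration the algorithm actually performs strictly decreases $\Phi$, and in particular no vector can be revisited. \emph{Quantitative descent}: the hypothesis says that whenever the current $k$-sparse $\beta$ violates at least one of (TC1),(TC2), its single-iteration image $\beta'$ satisfies $\Phi(\beta') \le \Phi(\beta) - \tfrac{\sigma^2}{4k}n$; in particular such a $\beta$ admits a strictly improving local move and hence is never a point at which LSA halts.

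Next I would establish correctness of the output. LSA terminates precisely at a vector admitting no strictly improving local move. By the contrapositive of the quantitative-descent fact, no vector that violates (TC1) or (TC2) can have this property. Therefore the terminal vector $\hat\beta$ that the algorithm outputs must satisfy both (TC1) and (TC2), which is exactly conclusions (1)--(2) of the claim.

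For the iteration bound I would telescope. Let $\beta_0, \beta_1, \ldots, \beta_T$ be the sequence of vectors visited, with $\beta_T$ the terminal output. Each of $\beta_0,\ldots,\beta_{T-1}$ is non-terminal, hence violates (TC1) or (TC2), so each of these steps drops the potential by at least $\tfrac{\sigma^2}{4k}n$. Summing the per-step decreases gives $T\cdot \tfrac{\sigma^2}{4k}n \le \Phi(\beta_0) - \Phi(\beta_T) \le \Phi(\beta_0) = \|Y-X\beta_0\|_2^2$, which rearranges to $T \le \tfrac{4k\|Y-X\beta_0\|_2^2}{\sigma^2 n}$. This is the stated bound, and its finiteness also certifies termination.

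The delicate point --- and the step I would be most careful about --- is the implication used in the counting, namely that \emph{every} pre-terminal vector $\beta_t$ with $t<T$ violates (TC1) or (TC2), equivalently that every performed iteration is a genuine $\tfrac{\sigma^2}{4k}n$-decreasing step. This is where the precise form of the hypothesis matters: it is quantitative only for violating vectors, so I must make sure the algorithm never performs an update from a vector already satisfying both conditions (which a priori could yield only a small, unbounded-count decrease). I would close this by arguing that as soon as both (TC1) and (TC2) hold the current vector is terminal, so that the trajectory passes only through violating vectors until it halts and the telescoping sum accounts for all iterations. Everything else is routine bookkeeping with the nonnegative monovariant $\Phi$.
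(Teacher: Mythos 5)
Your proposal is correct and follows essentially the same route as the paper: its proof is exactly your potential-function argument --- terminal vectors must satisfy (TC1),(TC2) because any violating vector admits a strictly improving move, and telescoping the per-iteration decrease of the nonnegative quantity $\|Y-X\beta\|_2^2$ by $\frac{\sigma^2}{4k}n$ gives the bound $4k\|Y-X\beta_0\|_2^2/(\sigma^2 n)$. The \emph{delicate point} you flag (possible small-decrease updates from a vector already satisfying both conditions) is treated no more carefully in the paper, whose proof simply asserts that at every non-terminating iteration the potential drops by at least $\frac{\sigma^2}{4k}n$, i.e., it implicitly makes the same identification of non-terminal with violating that you propose to justify.
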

\begin{proof}

The property clearly implies that for the algorithm to terminate it needs to satisfy both conditions $(TC1),(TC2)$. Hence we need to bound only the termination time appropriately. But since at every iteration that the algorithm does not terminate the quantity $\|Y-X\beta\|_2^2$ decreases by at least $\frac{\sigma^2}{4k}n$, the result follows.

\end{proof}

For any vector $v \in \mathbb{R}^p$ and $\emptyset \not = A \subseteq [p]$ we denote by $v_A \in \mathbb{R}^p$ the $p$-dimensional real vector such that $(v_A)_i=v_i$ for $i \in A$ and $(v_A)_i=0$ for $i \not \in A$. Furthermore we set $v_{\emptyset}=0_{p \times 1}$ for any vector $v$. Without the loss of generality from now on we assume $\mathrm{Support}\left( \beta^* \right)=[k]$. Following the Claim \ref{begin} and our discussion, in order to prove Theorem \ref{LSA} it suffices to prove that there exists $c,C>0$ such that w.h.p. there is no $k$-sparse $\beta$ that violates at least one of (TC1),(TC2) and furthermore satisfies that $\|Y-X\beta'\|_2^2 \geq \|Y-X\beta\|_2^2-\frac{\sigma^2}{4k}n$, where $\beta'$ is obtained from $\beta$ in one iteration of the LSA.

Suppose the existence of such a $\beta$. We first choose $C>0$ large enough so that $X'$ satisfies the $3k$-RIP with $\delta_{3k}<\frac{1}{12}$. The existence of this $C>0$ is guaranteed by Theorem \ref{RIPthm}. Denote by $T$ a super support of $\beta$, that satisfies $|T|=k$ and $T \cap [k]= \mathrm{Support}\left(\beta\right) \cap [k]$. The existence of $T$ is guaranteed as $|\mathrm{Support}\left( \beta\right)| \leq k$ and $k \leq \frac{p}{3}$. Note that in particular (TC1) is satisfied if and only if $\mathrm{Support}\left(\beta\right) = [k]$ if and only if $T=[k]$. We know that for all $i \in [p]$, $j \in T$ and $q \in \mathbb{R}$, $$\|Y-X\beta+\beta_jX_j-q X_i\|_2^2 \geq \|Y-X\beta\|_2 -\frac{\sigma^2}{4k}n$$  or equivalently,

\begin{align}\label{main_assum}
\|X\beta^*+W-X\beta+\beta_jX_j-qX_i \|_2^2 \geq \|X \beta^*+W-X\beta\|_2 -\frac{\sigma^2}{4k}n, \forall i \in [p], j \in T, q \in \mathbb{R}.
\end{align}

Consider the triplets $(a,b,c),(d,e,g) \in \mathbb{R}^{p+1} \times\mathbb{R}^{p+1} \times\mathbb{R}^{p+1} $, where
\begin{align*}
&a:=\begin{bmatrix}
           \beta^*_{[k] \setminus T} \\
           0
         \end{bmatrix}, b:=\begin{bmatrix}
           -\beta_{T \setminus [k]} \\
           0
         \end{bmatrix}, c:=\begin{bmatrix}
           (\beta^*-\beta)_{[k] \cap T} \\
           \sigma
         \end{bmatrix}
\end{align*}
         and 
\begin{align*}
d:=\begin{bmatrix}
           (\beta^*-\beta)_{[k]\cap T} \\
           0
         \end{bmatrix}, f:=\begin{bmatrix}
           0_{p \times 1} \\
           0
         \end{bmatrix},g:=\begin{bmatrix}
           (\beta^*)_{[k] \setminus T}-(\beta)_{T \setminus [k]}\\
           \sigma
         \end{bmatrix}. 
\end{align*}
         
\begin{lemma}\label{simple}
Assume that $ \|(\beta-\beta^*)_{[k]\cap T}\|_2^2 \geq  \sigma^2$. Then the inequalities (\ref{main_assum}) imply that the triplet $(d,f,g)$ is $\frac{1}{4}$-DLM with respect to the matrix $X'$.
\end{lemma}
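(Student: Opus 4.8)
The plan is to verify the three defining bullets of an $\alpha$-DLM for the triplet $(d,f,g)$ directly, by choosing explicit super-supports and then specializing the local-optimality inequalities (\ref{main_assum}) to one carefully chosen index. The whole point is that (\ref{main_assum}) is a family of inequalities indexed by $i\in[p]$, $j\in T$, $q\in\mathbb{R}$, and the DLM inequality is a single instance of this family in disguise.

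First I would record the two algebraic identities that make $(d,f,g)$ the relevant object. Since the last coordinate of $d$ vanishes, while $g$ carries $\sigma$ in coordinate $p+1$ and $X'_{p+1}=\frac{1}{\sigma}W$, one has $X'd = X(\beta^*-\beta)_{[k]\cap T}$ and $X'g = X\big[(\beta^*)_{[k]\setminus T}-\beta_{T\setminus[k]}\big]+W$. Using $\mathrm{Support}(\beta^*)=[k]$ and $\mathrm{Support}(\beta)\subseteq T$, the decomposition of $\beta^*-\beta$ over the three disjoint blocks $[k]\cap T$, $[k]\setminus T$, $T\setminus[k]$ gives $X'd+X'f+X'g = X(\beta^*-\beta)+W = Y-X\beta$. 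Likewise, because $f\equiv 0$ and, for $i\in[k]\cap T$, $X'_i=X_i$ and $d_i=(\beta^*-\beta)_i$, the perturbed vector appearing in the DLM inequality equals $(Y-X\beta)-(\beta^*-\beta)_iX_i$.

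Next I would fix the super-supports: $S_1:=[k]\cap T$ (which contains $\mathrm{Support}(d)$ and is nonempty, since the hypothesis $\|(\beta-\beta^*)_{[k]\cap T}\|_2^2\ge\sigma^2>0$ forces $d\neq 0$), $S_3:=([k]\setminus T)\cup(T\setminus[k])\cup\{p+1\}$ (a super-support of $g$), and $S_2$ any set of size $|S_1|$ disjoint from $S_1\cup S_3$. Writing $m:=|[k]\cap T|=|S_1|=|S_2|$, one checks $|S_3|=2k-2m+1$, so $S_1\cup S_3$ occupies $2k-m+1$ indices, leaving $p-2k+m\ge m$ free ones (using $k\le p/3$, hence $p\ge 2k$); thus a valid $S_2$ exists. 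These sets are pairwise disjoint, satisfy $|S_1|=|S_2|$, and $|S_1|+|S_2|+|S_3|=2k+1\le 3k$, so the first two bullets of the definition hold ($f\equiv 0$ admits any set as super-support).

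For the third bullet I would feed (\ref{main_assum}) the choice $j=i$ and $q=\beta^*_i$ for each $i\in S_1=[k]\cap T\subseteq T$; this turns $\beta_jX_j-qX_i$ into $-(\beta^*-\beta)_iX_i$ and yields $\|(Y-X\beta)-(\beta^*-\beta)_iX_i\|_2^2\ge\|Y-X\beta\|_2^2-\frac{\sigma^2}{4k}n$. By the identities above, the left side is exactly the DLM left-hand side, and $\|Y-X\beta\|_2^2=\|X'd+X'f+X'g\|_2^2$. Since $\|f\|_2^2=0$, the required DLM slack is $\frac14\frac{\|d\|_2^2}{|S_1|}n$, and the bound $\|d\|_2^2\ge\sigma^2$ together with $|S_1|=m\le k$ gives $\frac14\frac{\|d\|_2^2}{|S_1|}\ge\frac{\sigma^2}{4k}$, so the weaker DLM inequality follows. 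The only delicate part is the bookkeeping of the first step—splitting $\beta^*-\beta$ over the three blocks and tracking which coordinates of $d$ and $g$ live in the ambient $\mathbb{R}^{p+1}$; there is no genuinely hard estimate, as the analytic content is entirely the single substitution $j=i$, $q=\beta^*_i$ matching the generic local-optimality inequality to the DLM definition.
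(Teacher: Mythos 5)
Your proposal is correct and takes essentially the same route as the paper: the paper likewise specializes (\ref{main_assum}) to $i=j\in[k]\cap T$ with $q=\beta^*_i$, rewrites the resulting inequality via $X'(d+f+g)=Y-X\beta$, and uses $\frac{1}{4}\frac{\|d\|_2^2+\|f\|_2^2}{|[k]\cap T|}\ge\frac{\sigma^2}{4k}$ (from the hypothesis $\|d\|_2^2\ge\sigma^2$ and $|[k]\cap T|\le k$) to pass to the weaker $\frac14$-DLM slack, taking $S_2$ to be an arbitrary set of cardinality $|[k]\cap T|$ disjoint from the others. Your additional bookkeeping (counting that such an $S_2$ exists since $p\ge 2k$, and noting $S_1\neq\emptyset$ because $d\neq 0$) merely makes explicit details the paper leaves implicit.
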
 

\begin{proof}
We use the relation (\ref{main_assum}) and we choose   $i=j \in [k] \cap T$ , and $q=\beta^*_i$ to get that 
$$\|X \beta^*+W-X\beta+ (\beta_i -\beta^*_i)X_i\|_2^2 \geq \|X \beta^*+W-X\beta\|_2 -\frac{\sigma^2}{4k}n,\\ \text{ for all } i \in [k] \cap T.$$

But now notice that with respect to $X' \in \mathbb{R}^{n \times (p+1)}$ and the vectors $d,f,g$ defined above this condition can be written as  
\begin{align}\label{eq:X'1}
\|X'd+X'f+X'g-d_iX'_i\|_2^2 \geq \|X'\left(d+f+g\right)\|_2^2-\frac{\sigma^2}{4k}n,\text{ for all } i \in [k] \cap T.
\end{align}But based on our assumptions we have $$\frac{\|d\|_2^2+\|f\|_2^2}{|[k] \cap T|} = \frac{\|(\beta-\beta^*)_{[k]\cap T}\|_2^2}{|[k] \cap T|} \geq \frac{\sigma^2}{|[k] \cap T|} \geq \frac{\sigma^2}{k}$$ which combined with the inequality above gives,\begin{align}\label{eq:X'1}
\|\left(X'd-d_iX'_i\right)+X'f+X'g\|_2^2 \geq \|X'd+X'f+X'g\|_2^2-\frac{1}{4} \frac{\|d\|_2^2+\|f\|_2^2}{|[k] \cap T|}n,\text{ for all } i \in [k] \cap T,
\end{align} which by definition since $f=0$ says that $(d,f,g)$ is a $\frac{1}{4}$-DLM triplet with respect to $[k] \cap T$, $U$ and $\mathrm{Support}(g)$, where $U$ is an arbitrary set of cardinality $|[k]\cap T|$ which is disjoint from $[k] \cap T$ and $\mathrm{Support}(g)$.
\end{proof}

Recall that $\beta$ does not satisfy at least one of (TC1) and (TC2). We now consider different cases with respect to that.

\textbf{Case 1:} $T=[k]$ but $\|\beta-\beta^*\|_2^2 > \sigma^2$.

In that case $\|(\beta-\beta^*)_{[k] \cap T} \|_2^2 \geq \sigma^2$, because $T=[k]$. In particular, from Claim \ref{simple} we know that $(d,f,g)$ is a $\frac{1}{4}$-DLM triplet with respect to the matrix $X'$. From Lemma \ref{prop1} since we assume that $X'$ satisfies the $3k$-RIP with $\delta_{3k}<\frac{1}{12}$ w.h.p. we know that for $(d,f,g)$ to be a $\frac{1}{4}$-DLM triplet it needs to satisfy $$\|d\|_2^2+\|f\|_2^2 < \frac{1}{4} \|g\|_2^2, \text{ w.h.p.}$$  which equivalently means $$\|\left(\beta-\beta^*\right)_{[k] \cap T}\|_2^2  < \frac{1}{4} \left( \|\beta^*_{[k] \setminus T}\|_2^2+\|\beta_{T \setminus [k]}\|_2^2+\sigma^2 \right) \text{ w.h.p.}$$
or equivalently as $T=[k]$
$$ \|\beta-\beta^*\|_2^2 < \frac{\sigma^2}{4} \text{w.h.p.}$$
This is a contradiction with our assumption on $\beta$ that $\|\beta-\beta^*\|_2^2 > \sigma^2$. Therefore indeed this case leads w.h.p. to a contradiction and the proof in this case is complete.

\textbf{Case 2:} $T \not = [k]$.

We start by proving that in this case if we choose $c<1$ then the inequalities (\ref{main_assum}) imply deterministically that $(a,b,c)$ is an $\frac{1}{4}$-DLM triplet with respect to $[k] \setminus T$, $T \setminus [k]$ and $\left([k] \cap T \right) \cup \{p+1\}$ and the matrix $X'$. For $i \in [k] \setminus T$, $j \in T \setminus [k]$ and $q = \beta^*_j$  (\ref{main_assum}) implies
 $$\|X \beta^*+W-X\beta+ \beta_j X_j -\beta^*_i X_i\|_2^2 \geq \|X \beta^*+W-X\beta\|_2 -\frac{\sigma^2}{4k}n,\\ \text{ for all } i \in [k] \setminus T,j \in T \setminus [k].$$ But now notice that with respect to $X' \in \mathbb{R}^{n \times (p+1)}$, and the vectors $a,b,c$ defined above, this condition can be written as  
\begin{align}\label{eq:X'1}
\|X'a+X'b+X'c-a_iX'_i-b_jX'_j\|_2^2 \geq \|X'\left(a+b+c\right)\|_2^2-\frac{\sigma^2 n}{4k},\\ \text{ for all } i \in [k] \setminus T,j \in T \setminus [k]
\end{align}Furthermore since the non-zero elements of $a$ are non-zero elements of $\beta^*$ we know $|a|_{\min} \geq 1$. In particular for all $i \in [k] \setminus T$ it holds $a_i^2 \geq 1$ and therefore for $m=|[k] \setminus T|$ it holds $\frac{\|a\|_2^2+\|b\|_2^2}{m} \geq |a|_{\min} \geq 1$. Therefore the inequality above implies 
  \begin{align}\label{eq:X'1}
\|X'a+X'b+X'c-a_iX'_i-b_jX'_j\|_2^2 \geq \|X'a+X'b+X'c\|_2^2-\frac{\sigma^2n}{4k} \left(\frac{\|a\|_2^2+\|b\|_2^2}{m}\right),\\  \text{ for all } i \in [k] \setminus T,j \in T \setminus [k]
\end{align}Finally, since we are assuming $c<1$ we have $\sigma^2 \leq k$ and therefore \begin{align}
\|\left(X'a-a_iX'_i\right)+\left(X'b-b_jX'_j\right)+X'c\|_2^2 \geq \|X'a+X'b+X'c\|_2^2-\frac{n}{4} \left(\frac{\|a\|_2^2+\|b\|_2^2}{m}\right),\\ \text{ for all } i \in [k] \setminus T,j \in T \setminus [k]
\end{align} 
which since $m=k-|[k] \cap T|=|[k] \setminus T|=|T \setminus [k]|$ is exactly the property that $(a,b,c)$ is $\frac{1}{4}$-DLM with respect to the sets $[k] \setminus T$, $T \setminus [k]$ and $\left( [k] \cap T \right) \cap \{p+1\}$ and the matrix $X'$. Since we assume that $X'$ satisfies the $3k$-RIP with $\delta_{3k} <\frac{1}{12}$ we conclude from Proposition \ref{prop1} that $\|a\|_2^2+\|b\|_2^2 \leq \frac{1}{4}\|c\|_2^2$  or equivalently,
 \begin{align}\label{eq:sigma1} 
 \|\beta^*_{[k] \setminus T}\|_2^2+\|\beta_{T \setminus [k]}\|_2^2 \leq \frac{1}{4} \left( \|(\beta-\beta^*)_{[k] \cap T}\|_2^2+\sigma^2 \right).
\end{align}

Now we apply Proposition \ref{prop2} for the $\frac{1}{4}$-DLM triplet $(a,b,c)$ with respect to $S_1:=[k] \setminus T$, $S_2:=T \setminus [k]$ and $S_3:=\left([k] \cap T \right) \cup \{p+1\}$. Let $c_1,C_1>0$ the corresponding constants of the proposition. We choose our $C$ to satisfy $C>C_1$ so that the hypothesis of the Proposition \ref{prop2} applies for any $\frac{1}{4}$-DLM triplet with respect to our matrix $X'$. In particular since $(a,b,c)$ is a $\frac{1}{4}$-DLM triplet we know that it should not satisfy one of the conditions w.h.p.  We have that $|a|_{\min} \geq 1$ and it is easy to check that $S_1 \cup S_3=[k] \cup \{p+1\}$, $p+1 \in S_3$ and $S_1=\mathrm{Support}(a)$. Therefore from the conclusion of Proposition \ref{prop2} it must be true that the triplet $(a,b,c)$ must violate the third condition, that is
\begin{align*}
&c_1 \min\{ \frac{\log p}{\log \log p},k\} \leq \|a\|_2^2+\|b\|_2^2+\|c\|_2^2, \text{ w.h.p.}
\end{align*}
or equivalently
\begin{align*}
c_1 \min\{\frac{\log p}{\log \log p},k\} \leq \|\beta^*_{[k] \setminus T}\|_2^2+\|\beta_{T \setminus [k]}\|_2^2+  \|(\beta-\beta^*)_{T\cap [k]}\|_2^2+\sigma^2, 
\end{align*}Applying inequality (\ref{eq:sigma1}) with the last inequality we conclude
\begin{align*}
&c_1 \min\{\frac{\log p}{\log \log p},k\} \leq \frac{1}{4} ( \|(\beta-\beta^*)_{[k] \cap T}\|_2^2+\sigma^2)+  \|(\beta-\beta^*)_{T\cap [k]}\|_2^2+\sigma^2,
\end{align*}
or equivalently
\begin{align*}
\frac{4}{5}c_1 \min\{\frac{\log p}{\log \log p},k\}-\sigma^2 \leq  \|(\beta-\beta^*)_{[k] \cap T}\|_2^2, 
\end{align*}
Choosing our constant $c>0$ to satisfy $c < \frac{2}{5}c_1$, we can assume $2\sigma^2<\frac{4}{5}c_1\min\{ \frac{\log p}{\log \log p},k\}$ and therefore the last inequality implies
\begin{align}\label{surma}
\sigma^2 \leq \|\left(\beta-\beta^*\right)_{[k] \cap T}\|_2^2,
 \end{align}

This by Lemma \ref{simple} implies that $(d,f,g)$ is also an $\frac{1}{4}$-DLM triplet. In particular from Proposition \ref{prop1} we have  $$\|d\|_2^2+\|f\|_2^2 < \frac{1}{4} \|g\|_2^2,$$ which equivalently means  $$\|\left(\beta-\beta^*\right)_{[k] \cap T}\|_2^2  < \frac{1}{4} \left( \|\beta^*_{[k] \setminus T}\|_2^2+\|\beta_{T \setminus [k]}\|_2^2+\sigma^2 \right).$$ 
Using (\ref{eq:sigma1}) the above inequality implies w.h.p.
$$\|\left(\beta-\beta^*\right)_{[k] \cap T}\|_2^2   < \frac{1}{4} \left(1/4 ( \|(\beta-\beta^*)_{[k] \cap T}\|_2^2+\sigma^2)+\sigma^2 \right)$$ which implies $$\|\left(\beta-\beta^*\right)_{[k] \cap T}\|_2^2 < \frac{1}{3} \sigma^2,$$ a contradiction with the inequality (\ref{surma}).

\end{proof}

\begin{proof}[Proof of Theorem \ref{OGP} and Theorem \ref{nolocal}]
Given Proposition \ref{LocalMin} we only need to establish Theorem \ref{nolocal} to establish both of the Theorems, that is we only need to prove that there is no non-trivial local minimum for $(\tilde{\Phi}_2)$ w.h.p.
We choose constants $c,C>0$ so that the conclusion of Theorem \ref{LSA} is valid.  Suppose the existence of a $k$-sparse vector $\beta$ which is a non-trivial local minimum for $(\tilde{\Phi}_2)$, that is it satisfies the following conditions (a),(b);

\begin{itemize} 
\item[(a)] $\mathrm{Support}\left(\beta\right) \not = \mathrm{Support}\left(\beta^*\right)$, and 
\item[(b)] if a $k$-sparse $\beta_1$ satisfies \begin{equation*} \max \{ |\mathrm{Support}\left(\beta\right) \setminus \mathrm{Support}\left(\beta_1\right)|, |\mathrm{Support}\left(\beta_1\right) \setminus \mathrm{Support}\left(\beta\right)|\} \leq 1, \end{equation*} it must also satisfy $$\|Y-X\beta_1\|_2 \geq \|Y-X\beta\|_2.$$
\end{itemize}

We feed now $\beta$ as an input for the algorithm (LSA). From condition (b) we know that the algorithm will terminate immediately without updating the vector. But from Theorem \ref{LSA} we know that the output of LSA with arbitrary $k$-sparse vector as input will output a vector satisfying conditions $(1),(2)$ of Theorem \ref{LSA} w.h.p. In particular, since $\beta$ was the output of LSA with input itself, it should satisfy condition (1) w.h.p., that is $\mathrm{Support}\left(\beta\right) = \mathrm{Support}\left(\beta^*\right)$, w.h.p. which contradicts the definition of $\beta$ (condition (a)). Therefore w.h.p. there does not exist a non-trivial local minimum for $(\tilde{\Phi}_2)$. This completes the proof.
\end{proof}

\subsection{Proof of Proposition \ref{prop2}}\label{proofProp2}
Here we present the deferred proof of Proposition \ref{prop2}. 
\begin{proof}[Proof of Proposition \ref{prop2}]

We first choose $C_1>0$ large enough based on Theorem \ref{RIPthm} so that $n \geq C_1 k \log p$ implies that $X$ satisfies the $3k$-RIP with $\delta_{3k}<\frac{1}{16}$ w.h.p.  In particular all the probability calculations below will be conditioned on this high-probability event.

We start with a lemma for bounding the probability that a specific triplet $(a,b,c)$ is an $\frac{1}{2}$-D.L.M. triplet with respect to $X$.
\begin{lemma}\label{Hanson_app}
There exists a $c_0>0$ such that for any fixed triplet $(a,b,c)$  with $a \not =0$, $$\mathbb{P}\left( (a,b,c) \text{ is a } \frac{1}{2}\text{-D.L.M. triplet} \right) \leq 2\exp\left(-c_0 n\min\{1,\frac{\|a\|_2^2+\|b\|_2^2}{\|c\|_2^2}\}\right),$$
where for the case $c=0$ we abuse the notation by defining $\frac{1}{0}:=+\infty$.
\end{lemma}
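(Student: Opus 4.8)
The plan is to collapse the complicated DLM condition (which is a conjunction of inequalities over all pairs $(i,j)\in S_1\times S_2$) into a single scalar deviation inequality for a Gaussian quadratic form, and then apply the Hanson--Wright inequality. Throughout I work on the high-probability $3k$-RIP event with $\delta_{3k}<\tfrac1{16}$ that was fixed at the start of the proof of Proposition \ref{prop2}, so that Claim \ref{equivalence} is available. Applying that claim with $\alpha=\tfrac12$ shows that whenever $(a,b,c)$ is a $\tfrac12$-DLM triplet then, writing $v:=a+b$ (so $\|v\|_2^2=\|a\|_2^2+\|b\|_2^2$ since $a,b$ have disjoint supports),
\[
Z:=\|Xv\|_2^2+2(Xc)^T(Xv)\le\Bigl(\tfrac12+4\delta_{3k}\Bigr)\|v\|_2^2 n<\tfrac34\|v\|_2^2 n .
\]
Hence the DLM event is contained in $\{Z<\tfrac34\|v\|_2^2 n\}$, and it suffices to bound the probability of the latter.

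Next I would identify $Z$ as a quadratic form in the i.i.d.\ standard Gaussian entries of $X$. Using the identity $\|Xv\|_2^2+2(Xc)^T(Xv)=\|X(v+c)\|_2^2-\|Xc\|_2^2$ and writing $\mathbf{x}=\mathrm{vec}(X)\in\mathbb{R}^{np}$, one gets $Z=\mathbf{x}^T A\mathbf{x}$ with $A=I_n\otimes B$ and $B:=(v+c)(v+c)^T-cc^T=vv^T+vc^T+cv^T$, where the last step uses $v^Tc=0$ (disjoint supports). A trace computation gives $\mathbb{E}[Z]=\mathrm{tr}(A)=n\,\mathrm{tr}(B)=n\|v\|_2^2$, so the DLM event forces the lower deviation $Z\le\mathbb{E}[Z]-t$ with $t:=\tfrac14\|v\|_2^2 n$, i.e.\ $|Z-\mathbb{E}[Z]|\ge t$.

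The core of the argument is then estimating the two norms of $A$ and invoking Hanson--Wright. On the two-dimensional span of $v,c$ the matrix $B$ acts as $\left(\begin{smallmatrix}\|v\|_2^2 & \|v\|_2\|c\|_2\\ \|v\|_2\|c\|_2 & 0\end{smallmatrix}\right)$, which yields $\|B\|_F^2=\|v\|_2^2(\|v\|_2^2+2\|c\|_2^2)$ and, via $\sqrt{\|v\|_2^4+4\|v\|_2^2\|c\|_2^2}\le\|v\|_2^2+2\|v\|_2\|c\|_2$, the operator bound $\|B\|\le\|v\|_2^2+\|v\|_2\|c\|_2$; block-diagonality gives $\|A\|_F^2=n\|B\|_F^2$ and $\|A\|=\|B\|$. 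Plugging $t=\tfrac14\|v\|_2^2 n$ into the Hanson--Wright bound and setting $s=\|v\|_2^2$, $\rho=\|c\|_2^2$, I would verify in the two cases $\rho\le s$ and $\rho>s$ that
\[
\min\Bigl(\tfrac{t^2}{\|A\|_F^2},\tfrac{t}{\|A\|}\Bigr)\ \ge\ \tfrac{n}{48}\,\min\Bigl\{1,\tfrac{s}{\rho}\Bigr\},
\]
which delivers the lemma with $c_0=d/48$ (the constant $d$ from Hanson--Wright). The degenerate case $c=0$ reduces to $B=vv^T$, giving exponent $dn/16$, consistent with $\min\{1,\cdot\}=1$ and the convention $\tfrac10=+\infty$.

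I expect the only genuine work to be these two norm computations for $B$ together with the subsequent case split; they are elementary but must be arranged so the final exponent takes precisely the stated $\min\{1,(\|a\|_2^2+\|b\|_2^2)/\|c\|_2^2\}$ shape. The one conceptual point to handle carefully is the reduction itself: the quantifier over all $(i,j)$ in the definition of a DLM is collapsed, through the summation in Claim \ref{equivalence}, into the single inequality $Z<\tfrac34\|v\|_2^2 n$, and this is exactly where the $3k$-RIP conditioning is used.
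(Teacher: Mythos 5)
Your proposal is correct and follows essentially the same route as the paper: both reduce the $\frac{1}{2}$-DLM event via Claim \ref{equivalence} to a single lower-deviation inequality for a Gaussian quadratic form with mean $n\left(\|a\|_2^2+\|b\|_2^2\right)$, bound the Frobenius and operator norms of an explicit structured matrix, and invoke Hanson--Wright, even arriving at the same constant $c_0=d/48$. The only difference is packaging --- the paper writes the form in the $2n$-dimensional vector $\left(X(a+b)/\|a+b\|_2,\,Xc/\|c\|_2\right)$ with a $2\times 2$ Kronecker factor tensored with $I_n$, whereas you vectorize $X$ and use $I_n\otimes B$ with $B=vv^T+vc^T+cv^T$ --- an equivalent reparametrization whose nonzero spectrum coincides with the paper's up to the normalization by $\|a\|_2^2+\|b\|_2^2$.
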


\begin{proof}
We prove only the case $c \not = 0$. The case $c  =0$ is similar. Assume a fixed triplet $(a,b,c)$ is an $\frac{1}{2}$-DLM. Using Claim \ref{equivalence} we have that it holds $$\|X\left(a+b\right)\|_2^2+2(Xc)^T\left(X(a+b)\right) \leq \left(\frac{1}{2}+4\delta_{3k} \right)\left(\|a\|_2^2+\|b\|_2^2\right)n.$$ We set $X_1=X\left(\frac{a+b}{\sqrt{\|a\|_2^2+\|b\|_2^2}}\right)$ and $W_1=X\left(\frac{c}{\|c\|_2}\right)$ and notice that  $X_1,W_1$ have independent $N(0,1)$ entries because $a,b,c$ have disjoint supports. The last inequality can be expressed with respect to $X_1,W_1$ as, \begin{align*}
&\|X_1\|_2^2+2\frac{\|c\|_2}{\sqrt{\|a\|_2^2+\|b\|_2^2}}W_1X_1 \leq \left( \frac{1}{2}+4\delta_{3k}\right) n.
\end{align*}

Now we introduce matrix notation. For $I_n$ the $n \times n$ identity matrix we set 
$$A:= 
  \left[ {\begin{array}{cc}
    I_n  &\frac{\|c\|_2}{\sqrt{\|a\|_2^2+\|b\|_2^2}}I_n \\
   \frac{\|c\|_2}{\sqrt{\|a\|_2^2+\|b\|_2^2}} I_n & 0_n\\
  \end{array} } \right]$$ and $V$ be the $2n$ vector obtained by concatenating $X_1,W_1$, that is  $V:=(X_1,W_1)^t$.  Then the last inequality can be rewritten with respect to the matrix notation as $$V^tAV \leq \left( \frac{1}{2}+4\delta_{3k}\right) n.$$ 
    We now bound the probability of this inequality. First note that since $V$ is a vector with iid standard Gaussian elements it holds that $\E{V^tAV}=\mathrm{trace}\left(A\right)=n.$ Hence,
  \begin{align*}
&\mathbb{P}\left(V^tAV \leq \left( \frac{1}{2}+4\delta_{3k} \right) n \right)\\
   & \leq \mathbb{P}\left(|V^tAV-\E{V^tAV}| \geq (\frac{1}{2}-4\delta_{3k})n \right), \text{ using } \E{V^tAV}=n,\\
   & \leq \mathbb{P}\left(|V^tAV-\E{V^tAV}| \geq \frac{n}{4} \right), \text{ using that } \delta_{3k}<\frac{1}{16} \text{ implies }   \frac{1}{2}-4\delta_{3k} > \frac{1}{4} . 
\end{align*} Now we apply Hanson-Wright inequality, so we need to estimate the Frobenious norm and the spectral norm of the matrix $A$. We have  
  \begin{equation} \label{Frob}
\|A\|_F^2 \leq 3n \|A\|_{\infty}^2 \leq 3 \max\{1,\frac{\|c\|_2^2}{\|a\|_2^2+\|b\|_2^2}\}n.
\end{equation} Now using that $A$
can be represented as the Kronecker product $$ A=  \left[ {\begin{array}{cc}
   1  &\frac{\|c\|_2}{\sqrt{\|a\|_2^2+\|b\|_2^2}} \\
   \frac{\|c\|_2}{\sqrt{\|a\|_2^2+\|b\|_2^2}}  & 0\\
  \end{array} } \right] \otimes I_n$$ we obtain that the maximal eigenavalue of $A$ is the maximal eigenvalue of the $2\times 2$ first product term of the Kronecker product. In particular from this it can be easily checked that,\begin{equation}\label{spectral}
  \|A\| \leq 2 \max\{1,\sqrt{\frac{\|c\|_2^2}{\|a\|_2^2+\|b\|_2^2} }\}.
\end{equation}   
  
Now  from Hanson-Wright inequality we have for some constant $d>0$, \begin{equation} \label{final1}
\mathbb{P}\left( |V^tAV-\E{V^tAV} |\geq \frac{1}{4}n \right) \leq  2 \exp \left[-d \min \left( \frac{\frac{1}{16}n^2}{\|A\|^2_{\text{F}}},\frac{\frac{1}{4} n}{\|A\|}\right) \right]
\end{equation} Using (\ref{Frob}), (\ref{spectral}) and noticing that $\max\{1,\sqrt{\frac{\|c\|_2^2}{\|a\|_2^2+\|b\|_2^2} }\} \leq \max\{1,\frac{\|c\|_2^2}{\|a\|_2^2+\|b\|_2^2} \}$ we obtain that for the constant $c_0:=\frac{1}{48}d$ it holds $$d\min \left( \frac{\frac{1}{16}n^2}{\|A\|^2_{\text{F}}},\frac{\frac{1}{4} n}{\|A\|}\right) \geq c_0 n \min\{1,\frac{\|a\|_2^2+\|b\|_2^2}{\|c\|_2^2}\}$$ and therefore using (\ref{final1}) the proof is complete in this case.
\end{proof}

Now we proceed with the proof of the proposition. We define the following sets parametrized by $r,\tilde{c}>0$ and $\alpha \in (0,1)$ $$B_{r,\tilde{c}}:=\{ (a,b,c) \big{|} a,b,c \in \mathbb{R}^p,  \|a\|_0+\|b\|_0+\|c\|_0 \leq 2k+1, \|a\|_2^2+\|b\|_2^2+\|c\|_2^2 \leq r^2, |a|_{\min} \geq \tilde{c} \}$$ and\begin{align*}
&D_{\alpha,r,\tilde{c}} \text{ equal to}\\
&\{(a,b,c) \in B_{r,\tilde{c}} \big{|} (a,b,c) \text{ is } \alpha\text{-D.L.M. with correspondning super-supports satisfying}\\
& \text{the assumption (2) of the Proposition \ref{prop2} } \}
\end{align*}We call a triplet of sets $\emptyset \not = S_1,S_2,S_3 \subseteq [p]$ \textit{good} if \begin{itemize}
\item $S_1,S_2,S_3$ are pair-wise disjoint
\item $|S_1|=|S_2|$, $p \in S_3$ and $S_1 \cup S_3=[k] \cup \{p\}$
\end{itemize} For $\alpha \in \mathbb{R}$ and $S \subseteq \mathbb{R}$ we define the set $$S-\alpha:=\{s-\alpha|s \in S\}.$$ For $i=1,2,3$ we set $P_i:=\{(i-1)p+1,(i-1)p+2,\ldots,ip\}$. Notice that the sets $P_1,P_2,P_3$ partition $[3p]$.
We define the following family of subsets of $[3p]$, $$\mathcal{T}:=\{T \subset [3p] | \text{ the triplet } T \cap P_1,T\cap P_2-p,T \cap P_3-2p \text{ is good}\}.$$ It is easy to see that $\mathcal{T} \subset \{T \subset [3p] | |T| \leq 2k+1\}$. Furthermore for any $T \in \mathcal{T}$ we define

$$B_{r,\tilde{c}}(T):=\{ (a,b,c) \in B_{r,\tilde{c}}\big{|}  \mathrm{Support}\left( (a,b,c) \right) \subseteq T, T \cap P_1=\mathrm{Support}\left(a\right)\}$$ and \begin{align*}
&D_{\alpha,r,\tilde{c}}(T) \text{ equal to}\\
&\{ (a,b,c) \in B_{r,\tilde{c}}(T) \big{|} (a,b,c) \text{ is } \alpha\text{-D.L.M. with respect to } T \cap P_1,T\cap P_2-p,T \cap P_3-2p   \}.
\end{align*}We claim that \begin{align}\label{eq:Tunion}
D_{\frac{1}{4},r,1}=\bigcup_{T \in \mathcal{T}} D_{\frac{1}{4},r,1} \left(T\right).
\end{align}  For the one direction, if $A=(a,b,c) \in D_{\frac{1}{4},r,1} \left(T\right)$ for some $T \in \mathcal{T}$ then $(a,b,c)$ is $\alpha$-DLM with corresponding super-supports $T \cap P_1,T\cap P_2-p,T \cap P_3-2p$ which can be easily checked that they satisfy assumption (2) of the Proposition \ref{prop2} based on our assumptions. For the other direction if $A\in D_{\frac{1}{4},r,1}$ is an $\alpha$-DLM with respect to $S_1,S_2,S_3$ satisfying the assumption (2) of the Proposition, it can be easily verified that for the set $T=S_1 \cup \left(S_2+p \right) \cup \left(S_3+2p\right)$ it holds $T \in \mathcal{T}$ and furthermore $A \in D_{\frac{1}{4},r,1} \left(T\right)$.

Now to prove the proposition it suffices to prove that there exists $c_1,C_1>0$ such that if $n \geq C_1k \log p$ and $r=  \sqrt{c_1 \min \{ \frac{\log p}{\log \log p},k\}}$ then $$\lim_{k \rightarrow +\infty} \mathbb{P}\left(D_{\frac{1}{4},r,1} \not = \emptyset \right) =0.$$ Using the equation (\ref{eq:Tunion}) for $\alpha=\frac{1}{4}$ and $\tilde{c}=1$ and the union bound it suffices to be shown that for some $c_1,C_1>0$ if $n \geq C_1 k \log p$ and $r= \sqrt{c_1\min\{ \frac{\log p}{\log \log p},k\}}$ then $$\lim_{k \rightarrow +\infty}\sum_{T \in \mathcal{T}}   \mathbb{P}\left(D_{\frac{1}{4},r,1} \left(T\right) \not = \emptyset \right)=0.$$

We now state and prove the following packing lemma.
\begin{lemma}\label{packing}
There exists $C_2>0$ such that for any $r>0,\delta \in (0,1)$ and $T \in \mathcal{T}$ we can find $Q_{r,1-\delta}(T) \subseteq B_{r,1-\delta}(T)$ with the following two properties \begin{itemize}
\item $|Q_{r,1-\delta}(T)| \leq C_2\left(\frac{12r}{\delta}\right)^{2k+1}$. 
\item For any $p \in B_{r,1}(T)$ there exists $q \in Q_{r,1-\delta}(T)$ with $\|p-q\|_2 \leq \delta$. 
\end{itemize} 
\end{lemma}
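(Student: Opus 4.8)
The plan is to treat each triplet $(a,b,c)$ with support contained in $T$ as a single point of a Euclidean space of dimension $|T|$, and then invoke the standard volumetric estimate for a net of a Euclidean ball. Concretely, I would identify $(a,b,c)$ with the vector $v\in\mathbb{R}^{|T|}$ obtained by listing the coordinates of $a$ on $T\cap P_1$, of $b$ on $T\cap P_2-p$, and of $c$ on $T\cap P_3-2p$. Since for $T\in\mathcal{T}$ these three index sets are pairwise disjoint with union $T$, this identification is a linear isometry under which $\|a-a'\|_2^2+\|b-b'\|_2^2+\|c-c'\|_2^2=\|v-v'\|_2^2$, and the constraint $\|a\|_2^2+\|b\|_2^2+\|c\|_2^2\le r^2$ becomes $v\in\bar{B}(0,r)$. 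Recalling that $T\in\mathcal{T}$ forces $|T|\le 2k+1$, the set $B_{r,1}(T)$ embeds into the ball $\bar{B}(0,r)\subseteq\mathbb{R}^D$ with $D:=|T|\le 2k+1$.

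First I would dispose of the degenerate regime. Because $T\cap P_1=\mathrm{Support}(a)$ is nonempty for every good triplet and every $(a,b,c)\in B_{r,1}(T)$ satisfies $|a|_{\min}\ge 1$, any such $a$ carries a coordinate of absolute value at least $1$, whence $\|a\|_2\ge 1$ and therefore $r\ge 1$ whenever $B_{r,1}(T)\ne\emptyset$. Consequently, if $r<1$ the set $B_{r,1}(T)$ is empty and I may take $Q_{r,1-\delta}(T)=\emptyset$, for which both assertions are vacuous. From now on I assume $r\ge 1$, which together with $\delta\in(0,1)$ gives $r/\delta>1$.

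Next I would construct $Q_{r,1-\delta}(T)$ as a maximal $\delta$-separated subset of $B_{r,1}(T)$ in the metric above. Maximality yields the covering property: any $p\in B_{r,1}(T)$ lies within distance $\delta$ of some $q\in Q_{r,1-\delta}(T)$, which is the second bullet. Moreover the net is internal, $Q_{r,1-\delta}(T)\subseteq B_{r,1}(T)\subseteq B_{r,1-\delta}(T)$, so the required membership holds a fortiori; the weaker threshold $1-\delta$ in the statement is exactly the slack one needs if one instead nets the ambient ball and rounds, since a net point within $\delta$ of a valid triplet keeps every coordinate of $a$ of absolute value at least $1-\delta>0$ and thus preserves the exact support $T\cap P_1=\mathrm{Support}(a)$. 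For the cardinality I would run the usual packing argument: the balls of radius $\delta/2$ centred at the points of $Q_{r,1-\delta}(T)$ are pairwise disjoint and contained in $\bar{B}(0,r+\delta/2)$, so comparing volumes in $\mathbb{R}^D$ gives $|Q_{r,1-\delta}(T)|\,(\delta/2)^D\le(r+\delta/2)^D$, i.e. $|Q_{r,1-\delta}(T)|\le(1+2r/\delta)^D$.

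Finally I would simplify the base and exponent. Since $D\le 2k+1$ and $1+2r/\delta>1$, we have $(1+2r/\delta)^D\le(1+2r/\delta)^{2k+1}$; and using $1\le r/\delta$ we get $1+2r/\delta\le 3r/\delta\le 12r/\delta$, so that $|Q_{r,1-\delta}(T)|\le(12r/\delta)^{2k+1}$, giving the lemma with $C_2=1$. The argument is entirely standard, so I do not expect a substantial obstacle; the only genuine care-points are the reduction to $r\ge 1$ through the constraint $|a|_{\min}\ge 1$ and the observation that a perturbation of size $\delta<1$ cannot destroy the exact-support condition on $a$, which is precisely what makes the relaxed threshold $1-\delta$ natural in the statement.
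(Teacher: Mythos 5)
Your proof is correct, and it rests on the same underlying volumetric covering estimate as the paper's, but it organizes the construction differently. The paper first takes a standard $\delta$-net $Q'_{r,1-\delta}(T)$ of the \emph{unconstrained} ball $B_r(T)$ (citing the packing argument of \cite{Baraniuk2008}), of cardinality at most $C_2\left(\frac{12r}{\delta}\right)^{2k+1}$, and then sets $Q_{r,1-\delta}(T)=Q'_{r,1-\delta}(T)\cap B_{r,1-\delta}(T)$; the substance of the paper's proof is the rounding step showing that the net point $q=(l,m,n)$ within $\delta$ of a given $p=(a,b,c)\in B_{r,1}(T)$ survives this intersection, via $\|a-l\|_\infty\le\|p-q\|_2\le\delta$, which forces $|l_i|\ge 1-\delta$ on $T\cap P_1$ and hence $\mathrm{Support}(l)=T\cap P_1$ --- exactly the argument you sketch as an aside when explaining why the threshold $1-\delta$ appears. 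You instead take a maximal $\delta$-separated subset of $B_{r,1}(T)$ itself, which makes the net internal, $Q_{r,1-\delta}(T)\subseteq B_{r,1}(T)\subseteq B_{r,1-\delta}(T)$, so the support-preservation step disappears and the relaxation to $1-\delta$ is needed only to match the statement, not for the construction. Your version is slightly more self-contained: the disjoint-balls volume comparison yields the explicit bound $\left(1+\frac{2r}{\delta}\right)^{|T|}\le\left(\frac{12r}{\delta}\right)^{2k+1}$ with $C_2=1$, using $r/\delta\ge 1$, rather than invoking an external net; and your reduction to $r\ge 1$ via $|a|_{\min}\ge 1$ together with $T\cap P_1\neq\emptyset$ cleanly disposes of the degenerate regime $r<1$ (where $B_{r,1}(T)=\emptyset$), which the statement's quantification over all $r>0$ admits but the paper's proof does not discuss --- harmless there, since in the application $\delta\le\min\{\frac{1}{50r},\frac{1}{5}\}$ keeps the parameters away from that regime. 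The two proofs are otherwise interchangeable, and your internal net even enjoys the stronger property $|l|_{\min}\ge 1$, which would marginally simplify the subsequent claim where the paper only needs $|l_i|\ge\frac{1}{2}$.
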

\begin{proof}
Fix $r>0,\delta \in (0,1)$ and $T \in \mathcal{T}$. Since $T \subset [3p]$ and $|T| \leq 2k+1$ using standard packing arguments (see for example \cite{Baraniuk2008}) there exists universal constant $C_2>0$ and a set $$Q'_{r,1-\delta}(T) \subset B_{r}(T) :=\{ (a,b,c) \big{|} a,b,c \in \mathbb{R}^p,  \mathrm{Support}\left((a,b,c)\right) \subseteq T, \|a\|_2^2+\|b\|_2^2+\|c\|_2^2 \leq r^2 \}$$ with the properties that $|Q'_{r,1-\delta}(T)| \leq C_2\left(\frac{12r}{\delta}\right)^{2k+1}$ and that for any $p \in  B_r(T)$ there exists $q \in Q'_{r,1-\delta}(T)$ with $\|p-q\|_2 \leq \delta$.

To complete the proof we define $$Q_{r,1-\delta}\left(T\right)= Q'_{r,1-\delta}(T) \cap B_{r,1-\delta}(T).$$As $Q_{r,1-\delta}(T) \subseteq Q'_{r,1-\delta}(T)$ it also holds $$|Q_{r,1-\delta}(T)| \leq|Q'_{r,1-\delta}(T)| \leq C_2\left(\frac{12r}{\delta}\right)^{2k+1}.$$ For the other property let $p=(a,b,c) \in B_{r,1}(T)$. Since $B_{r,1}(T) \subseteq B_{r}(T)$ there exist $q=(l,m,n) \in Q'_{r,1-\delta}(T)$ with $\|p-q\|_2 \leq \delta$. We claim that $q \in B_{r,1-\delta}(T)$ which completes the proof. It suffices to establish $|l|_{\min} \geq 1-\delta$ and that $\mathrm{Support}\left(l\right) =T \cap P_1$.  We know $\|a-l\|_{\infty} \leq \|a-l\|_2 \leq \|p-q\|_2 \leq \delta.$ Therefore since for al $i \in T \cap P_1$, $|a_i|  \geq 1$ we get that for all $i \in T \cap P_1$, $|l_i| \geq 1-\delta$. Since $T \cap P_1$ was assumed to be a super-support of $l$ this implies both $\mathrm{Support}\left(l\right)=T \cap P_1$ and $|l|_{\min} \geq 1-\delta$. 

\end{proof}

\begin{claim}
Consider the sets $\{Q_{r,1-\delta}(T)\}_{T \in \mathcal{T}}$ from Lemma (\ref{packing}) defined for some $r>0$ and $0<\delta  \leq  \min\{\frac{1}{50r},\frac{1}{5}\}$. If $X$ satisfies the $3k$-RIP with $\delta_{3k} \in (0,1)$ then for any $T \in \mathcal{T}$ such that $D_{\frac{1}{4},r,1}(T) \not = \emptyset$, we have $Q_{r,1-\delta}(T) \cap D_{\frac{1}{2},r,\frac{1}{2}}(T) \not = \emptyset.$

\end{claim}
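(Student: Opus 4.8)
The plan is to realize the intersection point by discretizing a witness of the continuous set $D_{\frac14,r,1}(T)$, using the net from Lemma \ref{packing} together with the stability of the D.L.M. condition under small perturbations. Since $D_{\frac14,r,1}(T)\neq\emptyset$, fix a triplet $(a,b,c)\in D_{\frac14,r,1}(T)$; it is a $\frac14$-D.L.M.\ with respect to $S_1:=T\cap P_1$, $S_2:=(T\cap P_2)-p$, $S_3:=(T\cap P_3)-2p$, with $\mathrm{Support}(a)=S_1$ and $|a|_{\min}\geq1$. As $(a,b,c)\in B_{r,1}(T)$, the covering property of Lemma \ref{packing} supplies a net point $(l,m,n)\in Q_{r,1-\delta}(T)$ with $\|(a,b,c)-(l,m,n)\|_2\leq\delta$. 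Being a member of $B_{r,1-\delta}(T)$, the net point already carries the correct support pattern ($\mathrm{Support}(l)=S_1$, etc.) and, since $\delta\leq\frac15$, satisfies $|l|_{\min}\geq1-\delta\geq\frac12$; hence $(l,m,n)\in B_{r,\frac12}(T)$. It therefore remains only to show that $(l,m,n)$ is a $\frac12$-D.L.M.\ with respect to $S_1,S_2,S_3$, which would place it in $D_{\frac12,r,\frac12}(T)$ and finish the claim.

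To set up the perturbation, write $s:=a+b+c$ and $t:=l+m+n$, and for $i\in S_1$, $j\in S_2$ put $G^s_{ij}:=\|X(s-a_ie_i-b_je_j)\|_2^2-\|Xs\|_2^2$ and $H^s:=\bigl(\frac{\|a\|_2^2}{|S_1|}+\frac{\|b\|_2^2}{|S_2|}\bigr)n$, defining $G^t_{ij}$ and $H^t$ analogously from $(l,m,n)$. In this notation the $\frac14$-D.L.M.\ property of $(a,b,c)$ reads $G^s_{ij}\geq-\frac14H^s$ for all $i,j$, while what we must prove is $G^t_{ij}\geq-\frac12H^t$ for all $i,j$. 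Rearranging, it suffices to establish for each $(i,j)$ that
$$|G^t_{ij}-G^s_{ij}|+\tfrac12|H^t-H^s|\leq\tfrac14H^s.$$

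The two error terms are controlled by the $3k$-RIP together with $\|(a,b,c)-(l,m,n)\|_2\leq\delta$. Every vector involved is supported on $T$, hence $(2k+1)$-sparse and in particular $3k$-sparse, so $\|Xv\|_2\leq\sqrt{2n}\,\|v\|_2$. Since $s,t$ and the truncations $s-a_ie_i-b_je_j$, $t-l_ie_i-m_je_j$ all have $\ell_2$-norm at most $r$, while $\|s-t\|_2\leq\delta$ and (because the deleted coordinates vanish in both truncations) $\|(s-a_ie_i-b_je_j)-(t-l_ie_i-m_je_j)\|_2\leq\delta$, the polarization identity $\|Xu\|_2^2-\|Xv\|_2^2=(X(u-v))^{T}(X(u+v))$ with Cauchy--Schwarz yields $|G^t_{ij}-G^s_{ij}|\leq 8nr\delta\leq\frac{4}{25}n$, using $\delta\leq\frac1{50r}$. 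A direct estimate on the quadratic normalizer gives $|H^t-H^s|\leq\frac{4nr\delta}{|S_1|}\leq 4nr\delta\leq\frac{2}{25}n$, so the left-hand side above is at most $\frac n5$.

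The decisive point — and the one place where the hypotheses of Proposition \ref{prop2} genuinely enter — is the lower bound $H^s\geq n$, which furnishes the slack. Since $\mathrm{Support}(a)=S_1$ and $|a|_{\min}\geq1$, we have $\|a\|_2^2\geq|S_1|$, whence $H^s\geq\frac{\|a\|_2^2}{|S_1|}n\geq n$ and $\frac14H^s\geq\frac n4>\frac n5$. This closes the displayed inequality, so $(l,m,n)$ is a $\frac12$-D.L.M.\ triplet and lies in $Q_{r,1-\delta}(T)\cap D_{\frac12,r,\frac12}(T)$. I expect the only real obstacle to be recognizing that the $1/|S_1|$ normalization in the D.L.M.\ definition, combined with $|a|_{\min}\geq1$, forces $H^s=\Omega(n)$ — this is exactly what lets an $O(r\delta n)$ perturbation be absorbed once $\delta$ is calibrated to $\leq 1/(50r)$ and $\leq\frac15$; the RIP bookkeeping for the error terms is otherwise routine.
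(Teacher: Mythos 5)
Your proposal is correct and follows essentially the same route as the paper: pick a witness in $D_{\frac{1}{4},r,1}(T)$, take the $\delta$-close net point from Lemma \ref{packing} (whose support pattern and $|l|_{\min}\geq 1-\delta$ come for free from membership in $B_{r,1-\delta}(T)$), and show the D.L.M.\ inequality is stable under the perturbation because the $O(r\delta n)$ error is absorbed by the slack $\frac{1}{4}\left(\frac{\|a\|_2^2}{|S_1|}+\frac{\|b\|_2^2}{|S_2|}\right)n\geq\frac{n}{4}$ forced by $|a|_{\min}\geq 1$. The only (cosmetic) difference is that you bound the perturbed quadratic forms via the polarization identity with Cauchy--Schwarz and RIP, while the paper invokes the prepackaged inequalities of Proposition \ref{properties}(2); the resulting $10r\delta n$-type error bound and the calibration $\delta\leq\min\{\frac{1}{50r},\frac{1}{5}\}$ are the same.
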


\begin{proof}
 
 To prove the claim, we consider an element $A=(a,b,c) \in D_{\frac{1}{4},r,1}(T)$. Note that since $A \in  D_{\frac{1}{4},r,1}(T) \subseteq B_{r,1}(T) \subset B_{r,1-\delta}(T)$ the definition of $Q_{r,1-\delta}(T)$ implies that for some $L=(l,m,g) \in Q_{r,1-\delta}(T)$ it holds $\|A-L\|_2 \leq \delta$. To complete the proof we show that $L \in D_{\frac{1}{2},r,\frac{1}{2}}(T)$.

Notice that from the definition of the sets $Q_{r,1-\delta}(T),D_{\frac{1}{4},r,1}(T)$, the vectors $a,l$ share the set $S_1=T \cap P_1$ as a common super-support and furthermore the vectors $b,m$ share the set $S_2=T\cap P_2$ as a common super-support. Since $A \in D_{\frac{1}{4},r,1}(T)$ we know firstly $S_1=\mathrm{Support}(a)$, secondly for any $i \in S_1=\mathrm{Support}(a)$, $|a_i| \geq 1$ and finally that for any $i \in S_1$ and $j \in S_2$ 
\begin{equation}\label{eq: Data_for_A} 
\|\left(Xa-a_iX_i+Xb-b_jX_j\right)+Xc\|_2^2  \geq \|X(a+b+c)\|_2^2-\frac{1}{4}\left(\frac{\|a\|_2^2}{|S_1|}+\frac{\|b\|_2^2}{|S_2|}\right)n. 
\end{equation}

To prove $L \in D_{\frac{1}{2},r,\frac{1}{2}}(T)$ it suffices to prove now firstly that $S_1=\mathrm{Support}(l)$, secondly for any $i \in \mathrm{Support}(l)$, $|l_i| \geq \frac{1}{2}$ and finally that for every $i \in S_1$ and $j \in S_2$ 
\begin{equation}  \label{eq:targer_for_L}
\|\left(Xl-l_iX_i+Xm-m_jX_j\right)+Xg\|_2^2  \geq \|X(l+m+g)\|_2^2-\frac{1}{2}\left(\frac{\|l\|_2^2}{|S_1|}+\frac{\|m\|_2^2}{|S_2|}\right)n.
\end{equation}We start with the first two properties. This is a similar calculation as in the proof of Lemma \ref{packing}. We know $\|a-l\|_2 \leq \|A-L\|_2 \leq \delta<\frac{1}{2}$. In particular, $\|a-l\|_{\infty} \leq \frac{1}{2}$. But we know that $S_1=\mathrm{Support}(a)$ and $|a|_{\min} \geq 1$. These together imply that for all $i \in S_1$, $|l_i| \geq \frac{1}{2}$. Since $S_1$ is a super-support of $l$ we conclude that indeed $S_1=\mathrm{Support}(l)$ and that for any $i \in \mathrm{Support}(l)$, $|l_i| \geq \frac{1}{2}$ as required. 
 Now we prove the third property and use Proposition \ref{properties}. By part (2) of this proposition we know that since $X$ satisfies the $3k$-RIP for some restricted isometric constant $\delta_{3k}<1$, any two vectors $v,w$ which share a common super-support of size at most $3k$ satisfy

 \begin{align} \label{eq:RIP_impl.}
\|Xw\|_2^2+4\|v-w\|_2\|w\|_2n+2\|v-w\|_2^2n \geq \|Xv\|_2 \geq \|Xw\|_2^2-4\|v-w\|_2\|w\|_2n
\end{align} For our convenience for the calculations that follow we set for all $i \in S_1$ and $j \in S_2$, $A_{i,j}:=A-a_ie_i-b_je_j$ and $L_{i,j}:=L-l_ie_i-m_je_j$, where by $\{e_i\}_{i \in [3p]}$ we denote the standard basis vectors of $\mathbb{R}^{3p}$. In words for all $i \in S_1$ and $j \in S_2$ we set $A_{i,j}$ the vector $A$ after we set zero its $i$ and $j$ coordinates and similarly we define $L_{i,j}$. Now fix $i \in S_1$, $j \in S_2$. Then we have by directly applying (\ref{eq:RIP_impl.}) for the two pairs $v=L_{i,j}$ and $w=A_{i,j}$ and $v=L,w=A$ that
$$\|X(A_{i,j})\|_2^2 \leq  \|X(L_{i,j})\|_2^2+4\|L_{i,j}-A_{i,j}\|_2\|A_{i,j}\|_2n+2\|L_{i,j}-A_{i,j}\|_2^2n$$ and 
$$\|X(A)\|_2^2 \geq \|X(L)\|_2^2-4\|A-L\|_2\|L\|_2n,$$Hence $\|X(A_{i,j})\|_2^2 -\|X(A)\|_2^2$ is at most
$$ \|X(L_{i,j})\|_2^2+4\|L_{i,j}-A_{i,j}\|_2\|A_{i,j}\|_2n+2\|L_{i,j}-A_{i,j}\|_2^2n-\|X(L)\|_2^2+4\|A-L\|_2\|A\|_2n.$$
But using the easy observations $$\|A_{i,j}-L_{i,j}\|_2 \leq \|A-L\|_2 \leq \delta$$ and $$\|A_{i,j}\|_2 \leq \|A\|_2 \leq r$$ we get that the last quantity can be upper bounded by $ \|XL_{i,j}\|_2^2-\|XL\|_2^2+(8\delta r+2\delta^2)n$. Therefore combining the last steps we have established $$\|X(A_{i,j})\|_2^2 -\|X(A)\|_2^2 \leq \|XL_{i,j}\|_2^2-\|XL\|_2^2+(8\delta r+2\delta^2)n.$$
 
But we know that by our assumptions
 $\|X(A_{i,j})\|_2^2 -\|X(A)\|_2^2 \geq -\frac{1}{4}\left(\frac{\|a\|_2^2}{|S_1|}+\frac{\|b\|_2^2}{|S_2|}\right)n. $ Therefore $$\|XL_{i,j}\|_2^2-\|XL\|_2^2 \geq -\frac{1}{4}\left(\frac{\|a\|_2^2}{|S_1|}+\frac{\|b\|_2^2}{|S_2|}\right)n-(8\delta r+2\delta^2)n.$$ So to prove (\ref{eq:targer_for_L}) it suffices to be proven that  \begin{align} \label{eq:final}
-\frac{1}{4}\left(\frac{\|a\|_2^2}{|S_1|}+\frac{\|b\|_2^2}{|S_2|}\right)n-(8\delta r+2\delta^2)n \geq -\frac{1}{2}\left(\frac{\|l\|_2^2}{|S_1|}+\frac{\|m\|_2^2}{|S_2|}\right)n.
\end{align} Note that  $\|A\|_2 \leq r,\|L\|_2 \leq r,\|A-L\|_2 \leq \delta$ implies $\|a\|_2^2-\|l\|_2^2 \leq 2 \delta r$ and $\|b\|_2^2-\|m\|_2^2 \leq 2 \delta r$. Hence from the definition of $A,L$ and since $|S_1|=|S_2| \geq 1$ it holds, $$\frac{1}{2}\left(\frac{\|a\|_2^2}{|S_1|}+\frac{\|b\|_2^2}{|S_2|}\right)n-\frac{1}{2}\left(\frac{\|l\|_2^2}{|S_1|}+\frac{\|m\|_2^2}{|S_2|}\right)n \leq 2 \delta r n.$$ In particular it holds $$-\frac{1}{2}\left(\frac{\|a\|_2^2}{|S_1|}+\frac{\|b\|_2^2}{|S_2|}\right)n \geq -\frac{1}{2}\left(\frac{\|l\|_2^2}{|S_1|}+\frac{\|m\|_2^2}{|S_2|}\right)n -2 \delta r n.$$ Hence using the last inequality we can immediately derive (\ref{eq:final}) provided that $$ \frac{1}{4}\left(\frac{\|a\|_2^2}{|S_1|}+\frac{\|b\|_2^2}{|S_2|}\right)n \geq 2\delta r n+(8\delta r+2\delta^2)n=(10\delta r+2\delta^2)n .$$ But now since $a_i^2 \geq 1$ for all $i \in S_1$, $\frac{\|a\|_2^2}{|S_1|} \geq 1$ and therefore $$ \frac{1}{4}\left(\frac{\|a\|_2^2}{|S_1|}+\frac{\|b\|_2^2}{|S_2|}\right)n \geq \frac{1}{4}n.$$ so it suffices that $2\delta^2+10 \delta r  \leq \frac{1}{4}$. It can be easily checked to be true if $\delta  \leq \min\{\frac{1}{50r},\frac{1}{5}\}$. The proof of the claim is complete.
\end{proof}

To prove the proposition we need to show that for some $c_1,C_1>0$ if $n \geq C_1 k \log p$, $r= \sqrt{c_1 \min\{ \frac{\log p}{\log \log p},k\}}$ and $\delta =\frac{1}{60r}$ then for the appropriately defined sets $\{Q_{r,1-\delta}(T)\}_{T \in \mathcal{T}}$ it holds
 $$ \lim_{k \rightarrow + \infty} \sum_{T \in \mathcal{T}} \mathbb{P}\left(|Q_{r,1-\delta}(T) \cap D_{\frac{1}{2},r,\frac{1}{2}}(T)| \geq 1\right) = 0.$$ But by Markov inequality for all such $T \in \mathcal{T}$, $$\mathbb{P}\left(|Q_{r,1-\delta}(T) \cap D_{\frac{1}{2},r,\frac{1}{2}}| \geq 1\right) \leq \E{|Q_{r,1-\delta}(T) \cap D_{\frac{1}{2},r,\frac{1}{2}}|}.$$ Furthermore for all $T \in \mathcal{T}$, $1 \leq |T \cap P_2| \leq k$. By the Markov inequality and summing over the possible values of $|T\cap P_2|$ for $T \in \mathcal{T}$, it suffices to show that for some $c_1,C_1>0$ if $n \geq C_1 k \log p$ and $r= \sqrt{c_1 \min\{  \frac{\log p}{\log \log p},k\}}$ then,

\begin{align}\label{eq:Target}
\lim_{k \rightarrow + \infty} \sum_{m=1}^{k} \sum_{T \in \mathcal{T}, |T \cap P_2|=m} \mathbb{E}\left(|Q_{r,1-\delta}(T) \cap D_{\frac{1}{2},r,\frac{1}{2}}(T)| \right) = 0
\end{align}

Fix $m \in [k]$ and a set $T \in \mathcal{T}$ with $|T\cap P_2|=m$. Then for any $A=(a,b,c) \in Q_{r,1-\delta}(T) \cap D_{\frac{1}{2},r,\frac{1}{2}}(T)$, since $D_{\frac{1}{2},r,\frac{1}{2}}(T) \subseteq B_{r,\frac{1}{2}}(T)$, we have $|a|_{\min} \geq \frac{1}{2}$ and $\|a\|_2^2+\|b\|_2^2+\|c\|_2^2 \leq r^2$. Based on the definition of $D_{\frac{1}{2},r,\frac{1}{2}}(T)$, we also have $|\mathrm{Support}(a)|=|S_1|=|S_2|=|T \cap P_2|=m$. Hence, $\|a\|_2^2 \geq |a|^2_{\min}m \geq \frac{1}{4}m$ and  $\|c\|_2^2 \leq \|a\|_2^2+\|b\|_2^2+\|c\|_2^2 \leq r^2$. By Lemma \ref{Hanson_app} we know that for any triplet $A=(a,b,c)$, $\mathbb{P}\left( A \in D_{\frac{1}{2},r,\frac{1}{2}}(T) \right) \leq  \exp \left(-c_0n \min \{1,\frac{\|a\|_2^2+\|b\|_2^2}{\|c\|_2^2} \} \right)$. Hence using the above inequalities we can conclude that for any such $A=(a,b,c) \in Q_{r,1-\delta}(T) $ it holds 
\begin{align}\label{eq:app1}
\mathbb{P}\left( A \in D_{\frac{1}{2},r,\frac{1}{2}}(T) \right) \leq 2\exp \left(-\frac{1}{4}c_0n \min \{1,\frac{m}{r^2} \} \right)
\end{align} Linearity of expectation, the above bound and the cardinality assumption on $Q_{r,1-\delta}(T)$ imply

\begin{align}\label{eq:app2}
& \E{|Q_{r,1-\delta}(T) \cap D_{\frac{1}{2},r,\frac{1}{2}}(T)|} \leq 2|Q_{r,1-\delta}(T)| \exp \left(-\frac{1}{4}c_0n \min \{1,\frac{m}{r^2} \} \right)\\
& \leq 2C_2 \left(\frac{12r}{\delta}\right)^{2k+1}\exp \left(-\frac{1}{4}c_0n \min \{1,\frac{m}{r^2} \} \right). 
\end{align}

We now count the number of possible $T \in \mathcal{T}$ with $ |T\cap P_2|=m$. Recall that any $T \subseteq [3p]$ satisfies $T \in \mathcal{T}$ if and only if the triplet of sets $T \cap P_1,T\cap P_2-p,T \cap P_3-2p$ is a \textit{good} triplet. That is if and only if \begin{itemize}
\item[(1)] $T \cap P_1,T\cap P_2-p,T \cap P_3-2p$ are pairwise disjoint sets and $|T \cap P_1|=|T \cap P_2-p|=|T \cap P_2|=m$
\item[(2)] $p \in T \cap P_3-2p $ 
\item[(3)] $\left( T\cap P_1 \right) \cup \left( T \cap P_3-2p \right)=[k] \cup \{p\}$
\end{itemize}
Since a set $T \subseteq [3p]$ is completely characterized by the intersections with $P_1,P_2,P_3$, it suffices to count the number of triplets of sets $T \cap P_i$, $i=1,2,3$ satisfying the three above conditions. Now conditions (1),(3) imply that $T\cap P_3$ is completely characterized by $T \cap P_1$. Furthermore by checking conditions $(1),(2),(3)$ we know that $T \cap P_1$ is an arbitrary subset of $[k]$ of size $m$. Hence we have $\binom{k}{m}$ choices for both the sets $T \cap P_1$ and $T \cap P_3$. Finally for the set $T \cap P_2$ we only have that it needs to satisfy $|T \cap P_2|=m$. Hence for $T \cap P_2$ we have $\binom{p}{m}$ choices, giving in total that the number of sets $T \in \mathcal{T}$ with $ |T \cap P_2|=m$  equals to $\binom{k}{m} \binom{p}{m}$. Hence,
$$ \sum_{T \in \mathcal{T}, |T\cap P_2|=m} \mathbb{E}\left(|Q_{r,1-\delta}(T)\cap D_{\frac{1}{2}}(T)| \right) \leq 2\binom{k}{m}\binom{p}{m} C_2 \left(\frac{12r}{\delta}\right)^{2k+1}\exp \left(-\frac{1}{4}c_0n \min \{1,\frac{m}{r^2} \} \right).$$ Summing over all $m=1,2,\ldots,k$  and using the bounds $\binom{k}{m} \leq 2^k,\binom{p}{m} \leq p^m$ we conclude that
\begin{align*}
&\sum_{m=1}^k\sum_{T \in \mathcal{T}, |T\cap P|=m} \mathbb{E}\left(|Q_{r,1-\delta}(T) \cap D_{\frac{1}{2},r,\frac{1}{2}}(T)| \right)
\end{align*}
is at most  
$$2C_3 k2^k \max_{m=1,\ldots,k} \left[ p^m \left(\frac{12r}{\delta}\right)^{2k+1}\exp \left(-\frac{1}{4}c_0n \min \{1,\frac{m}{r^2} \} \right)\right].$$

Therefore it suffices to show that for some $c_1,C_1>0$ if $n \geq C_1 k \log p$, $r=  \sqrt{c_1 \min\{ \frac{\log p}{\log \log p},k\}}$ and $\delta=\frac{1}{60r}$ then $$ \lim_{k \rightarrow  \infty} k 2^{k}\max_{m=1,\ldots,k} \left[ p^m \left(\frac{12r}{\delta}\right)^{2k+1}\exp \left(-\frac{1}{4}c_0n \min \{1,\frac{m}{r^2} \} \right)\right] = 0.$$Since this is an increasing quantity in $n$ and in $\frac{1}{\delta}$ we plug in $n=\frac{4}{c_0}C_1 k \log p$ and $\delta=\frac{1}{60 r}$ (since $r \rightarrow + \infty$) and after taking logarithms it suffices to be proven that for $C_1$ large enough but constant and $c_1>0$ small enough but constant, if $r= \sqrt{c_1 \min\{ \frac{\log p}{\log \log p},k\}}$ then $$\max_{m=1,\ldots,k} \left[ m \log p+ (2k+1)\log \left(1000 r^2\right)-C_1k \log p \min \{1,\frac{m}{r^2} \} \right] +k \log 2+ \log k \rightarrow -\infty.$$

We consider the two cases: when $m \leq  r^2$ and when $m \geq r^2$. Suppose $m \geq  r^2$, that is $\min \{1,\frac{m}{r^2} \}=1$. We choose $c_1$ small enough so that $1000r^2 \leq k \leq p$ and therefore 
\begin{align*}
&\max_{k \geq m \geq  r^2} \left[m \log p+ (2k+1)\log \left(1000 r^2\right)-C_1k \log p \min \{1,\frac{m}{r^2} \} \right]+k \log 2+ \log k\\
&=\max_{k \geq m \geq  r^2} \left[m \log p+ (2k+1)\log \left(1000 r^2\right)-C_1k \log p\right] +k \log 2+ \log k \\
& \leq -(C_1-4)k \log p + k \log 2+ \log k, \text{ since } m \log p+ (2k+1)\log \left(1000 r^2\right) \leq 4k \log p, \\
& \leq -(C_1-5) k \log p,
\end{align*} which if $C_1>6$ clearly diverges to $-\infty$ as $k \rightarrow +\infty$.

Now suppose $m \leq  r^2$, that is when $\min \{1,\frac{m}{r^2} \}=\frac{m}{r^2}$. We have 
\begin{align*}
&\max_{1 \leq m \leq  r^2} \left[m \log p+ (2k+1)\log \left(1000 r^2\right)-C_1k \log p \min \{1,\frac{m}{r^2} \} \right]+k \log 2+ \log k\\
&=\max_{1 \leq m \leq  r^2} \left[m \log p+ (2k+1)\log \left(1000 r^2\right)-C_1k \log p \frac{m}{r^2}\right] + k \log 2+ \log k. 
\end{align*} We write 
\begin{align*}
&m \log p+ (2k+1)\log \left(1000 r^2\right)-C_1k \log p \frac{m}{r^2}\\
&=m \log p -\frac{C_1}{2}k \log p \cdot \frac{m}{r^2}+(2k+1)\log \left(1000 r^2\right)- \frac{C_1}{2}k \log p \cdot  \frac{m}{r^2}.
\end{align*} 
But now for $c_1<1$ we have $r^2 \leq k$ and therefore
\begin{align}\label{eq:step1}
m \log p -\frac{C_1}{2}k \log p \cdot \frac{1}{4}\frac{m}{r^2} \leq (1-\frac{C_1}{2})m \log p \leq - 2\log p 
\end{align} for $C_1 \geq 6.$ Now we will bound the second summand.
Again assuming $C_1>6$ and using that $m \geq 1$ we have 
 \begin{align}\label{eq:step2}
(2k+1)\log \left(1000 r^2\right)- \frac{C_1}{2}k \log p \cdot  \frac{m}{r^2} \leq 3k \left(\log \left(1000 r^2\right)-\frac{1}{4r^2} \log p  \right)
\end{align} Now we claim that the right hand side of the above inequalty is at most $-3k$, given $c_1$ small enough, as $k \rightarrow + \infty$. It suffices to prove that if $r \leq  \sqrt{c_1 \frac{\log p}{\log \log p}}$ for some $c_1>0$ small enough then $\log \left(1000 r^2\right)-\frac{1}{4r^2} \log p \leq -1$ or equivalently $r^2\log \left(1000 r^2\right)+r^2 \leq \frac{1}{4}\log p$. But notice that the left hand side of the last inequality is increasing in $r$ and it can be easily checked that if $r^2=\frac{1}{5}\frac{\log p}{\log \log p}$ then $\frac{r^2\log \left(1000 r^2\right)+r^2 }{\log p}$ tends in the limit (as $p$ grows to infinity) to $\frac{1}{5}$ which is less than $\frac{1}{4}$. Therefore if $c_1< \frac{1}{5}$ the inequality becomes true for large enough $p$ for this value of $r$ and my monotonicity for all smaller values of $r$ as well. Now combining (\ref{eq:step1}) and (\ref{eq:step2}) we conclude that for small enough $c_1>0$ and large enough $C_1>0$ that 
 \begin{align*}
&\max_{1 \leq m \leq 4 r^2} \left[m \log p+ (2k+1)\log \left(1000 r^2\right)-C_1k \log p \frac{1}{4}\frac{m}{r^2}\right] +k \log 2+ \log k \\
& \leq -2 \log p-3k+k \log 2+ \log k\\
& \leq -(3-2 \log 2)k+\log k \rightarrow -\infty, \text{ as } n,p,k \rightarrow +\infty
\end{align*} which completes the proof.

\end{proof}

\bibliographystyle{plain}
\bibliography{bibliography}

\begin{thebibliography}{10}

\bibitem{AchlioptasCojaOghlanRicciTersenghi}
D.~Achlioptas, A.~Coja-Oghlan, and F.~Ricci-Tersenghi.
\newblock On the solution space geometry of random formulas.
\newblock {\em Random Structures and Algorithms}, 38:251--268, 2011.

\bibitem{achlioptas2008algorithmic}
Dimitris Achlioptas and Amin Coja-Oghlan.
\newblock Algorithmic barriers from phase transitions.
\newblock In {\em Foundations of Computer Science, 2008. FOCS'08 IEEE 49th
  Annual IEEE Symposium on}, pages 793--802. IEEE, 2008.

\bibitem{Baraniuk2008}
Richard Baraniuk, Mark Davenport, Ronald DeVore, and Michael Wakin.
\newblock A simple proof of the restricted isometry property for random
  matrices.
\newblock {\em Constructive Approximation}, 28(3):253--263, Dec 2008.

\bibitem{Barbier14}
J.~{Barbier} and F.~{Krzakala}.
\newblock Replica analysis and approximate message passing decoder for
  superposition codes.
\newblock In {\em 2014 IEEE International Symposium on Information Theory},
  pages 1494--1498, June 2014.

\bibitem{Chris17}
I.~{Ben Atitallah}, C.~{Thrampoulidis}, A.~{Kammoun}, T.~Y. {Al-Naffouri},
  M.~{Alouini}, and B.~{Hassibi}.
\newblock The box-lasso with application to gssk modulation in massive mimo
  systems.
\newblock In {\em 2017 IEEE International Symposium on Information Theory
  (ISIT)}, pages 1082--1086, June 2017.

\bibitem{BertsimasRegression}
D.~Bertsimas and Bart~Van Parys.
\newblock Sparse high dimensional regression: Exact scalable algorithms and
  phase transitions.
\newblock {\em arXiv preprint arXiv:1709.10029}, 2017.

\bibitem{BickelGenome}
Peter~J. Bickel, James~B. Brown, Haiyan Huang, and Qunhua Li.
\newblock An overview of recent developments in genomics and associated
  statistical methods.
\newblock {\em Phil. Trans. R. Soc. A}, 2009.

\bibitem{bickel2009}
Peter~J. Bickel, Ya'acov Ritov, and Alexandre~B. Tsybakov.
\newblock Simultaneous analysis of lasso and dantzig selector.
\newblock {\em Ann. Statist.}, 37(4):1705--1732, 08 2009.

\bibitem{davies}
Thomas Blumensath and Mike~E. Davies.
\newblock Iterative hard thresholding for compressed sensing.
\newblock {\em Applied and Computational Harmonic Analysis}, 27(3):265 -- 274,
  2009.

\bibitem{MRImed}
Zhao Bo, Wenmiao Lu, T.~Kevin Hitchens, Fan Lam, Chien Ho, and Zhi‐Pei Liang.
\newblock Accelerated mr parameter mapping with low‐rank and sparsity
  constraints.
\newblock {\em Magnetic Resonance in Medicine}, 2014.

\bibitem{chai}
T.~T. Cai and L.~Wang.
\newblock Orthogonal matching pursuit for sparse signal recovery with noise.
\newblock {\em IEEE Transactions on Information Theory}, 57(7):4680--4688, July
  2011.

\bibitem{Tony18}
T.~Tony Cai and Zijian Gao.
\newblock Accuracy assessment for high-dimensional linear regression1.
\newblock {\em The Annals of Statistics}, 2018.

\bibitem{candes2007}
Emmanuel Candes and Terence Tao.
\newblock The dantzig selector: Statistical estimation when p is much larger
  than n.
\newblock {\em Ann. Statist.}, 35(6):2313--2351, 12 2007.

\bibitem{candes}
Emmanuel~J. Candes, Justin~K. Romberg, and Terence Tao.
\newblock Stable signal recovery from incomplete and inaccurate measurements.
\newblock {\em Communications on Pure and Applied Mathematics},
  59(8):1207--1223, 2006.

\bibitem{candes2005decoding}
Emmanuel~J Candes and Terence Tao.
\newblock Decoding by linear programming.
\newblock {\em IEEE transactions on information theory}, 51(12):4203--4215,
  2005.

\bibitem{JASAgenomics}
Carlos~M. Carvalho, Jeffrey Chang, Joseph~E. Lucas, Joseph~R. Nevins, Quanli
  Wang, and Mike West.
\newblock High-dimensional sparse factor modeling: Applications in gene
  expression genomics.
\newblock {\em Journal of the American Statistical Association}, 2008.

\bibitem{SignDen}
Scott~Shaobing Chen, David~L. Donoho, and Michael~A. Saunders.
\newblock Atomic decomposition by basis pursuit.
\newblock {\em SIAM Rev.}, 43(1):129--159, January 2001.

\bibitem{coja2011independent}
A.~Coja-Oghlan and C.~Efthymiou.
\newblock On independent sets in random graphs.
\newblock In {\em Proceedings of the Twenty-Second Annual ACM-SIAM Symposium on
  Discrete Algorithms}, pages 136--144. SIAM, 2011.

\bibitem{DonohoAMP}
D.~L. Donoho, A.~Javanmard, and A.~Montanari.
\newblock Information-theoretically optimal compressed sensing via spatial
  coupling and approximate message passing.
\newblock {\em IEEE Transactions on Information Theory}, 59(11):7434--7464, Nov
  2013.

\bibitem{donoho2006compressed}
David~L Donoho.
\newblock Compressed sensing.
\newblock {\em IEEE Transactions on information theory}, 52(4):1289--1306,
  2006.

\bibitem{donoho2006counting}
David~L. Donoho and Jared Tanner.
\newblock Counting the faces of randomly-projected hypercubes and orthants,
  with applications.
\newblock {\em Discrete {\&} Computational Geometry}, 43(3):522--541, Apr 2010.

\bibitem{Casto11}
Emmanuel J.~Cand`es Ery Arias-Castro and Yaniv Plan.
\newblock Global testing under sparse alternatives: Anova, multiple comparisons
  and the higher criticism.
\newblock {\em The Annals of Statistics}, 2011.

\bibitem{gamarnik2016finding}
David Gamarnik and Quan Li.
\newblock Finding a large submatrix of a gaussian random matrix.
\newblock {\em arXiv preprint arXiv:1602.08529}, 2016.

\bibitem{gamarnik2014limits}
David Gamarnik and Madhu Sudan.
\newblock Limits of local algorithms over sparse random graphs.
\newblock {\em Annals of Probability. {\rm To appear}}.

\bibitem{gamarnik2014performance}
David Gamarnik and Madhu Sudan.
\newblock Performance of sequential local algorithms for the random nae-k-sat
  problem.
\newblock {\em SIAM Journal on Computing. {\rm To appear}}.

\bibitem{gamarnikzadik}
David Gamarnik and Ilias Zadik.
\newblock High dimensional linear regression with binary coefficients: Mean
  squared error and a phase transition.
\newblock {\em Conference on Learning Theory (COLT)}, 2017.

\bibitem{Ed2012}
Edward~I. George.
\newblock The variable selection problem.
\newblock {\em Journal of the American Statistical Association}, 2012.

\bibitem{hanson1971}
D.~L. Hanson and F.~T. Wright.
\newblock A bound on tail probabilities for quadratic forms in independent
  random variables.
\newblock {\em Ann. Math. Statist.}, 42(3):1079--1083, 06 1971.

\bibitem{Book15}
Trevor Hastie, Robert Tibshirani, and Martin~J. Wainwright.
\newblock {\em Statistical Learning with Sparsity: The Lasso and
  Generalizations}.
\newblock Chapman and Hall/CRC Monographs on Statistics and Applied
  Probability, 2015.

\bibitem{Lucas17}
Lucas Janson, Rina~Foygel Barber, and Emmanuel Cand{\`e}s.
\newblock Eigenprism: inference for high dimensional signal-to-noise ratios.
\newblock {\em Journal of the Royal Statistical Society. Series B}, 2017.

\bibitem{johnstone2009consistency}
Iain~M Johnstone and Arthur~Yu Lu.
\newblock On consistency and sparsity for principal components analysis in high
  dimensions.
\newblock {\em Journal of the American Statistical Association}, 104(486),
  2009.

\bibitem{Barron12}
A.~{Joseph} and A.~R. {Barron}.
\newblock Least squares superposition codes of moderate dictionary size are
  reliable at rates up to capacity.
\newblock {\em IEEE Transactions on Information Theory}, 58(5):2541--2557, May
  2012.

\bibitem{Barron14}
A.~{Joseph} and A.~R. {Barron}.
\newblock Fast sparse superposition codes have near exponential error
  probability for $r<{\cal c}$.
\newblock {\em IEEE Transactions on Information Theory}, 60(2):919--942, Feb
  2014.

\bibitem{DonohoMRI}
M.~Lustig, D.~L. Donoho, J.~M. Santos, and J.~M. Pauly.
\newblock Compressed sensing mri.
\newblock {\em IEEE Signal Processing Magazine}, 25(2):72--82, March 2008.

\bibitem{Aos06}
Nicolai Meinshausen and Peter B{\"u}hlmann.
\newblock High-dimensional graphs and variable selection with the lasso.
\newblock {\em The Annals of Statistics}, 2006.

\bibitem{montanari2011reconstruction}
Andrea Montanari, Ricardo Restrepo, and Prasad Tetali.
\newblock Reconstruction and clustering in random constraint satisfaction
  problems.
\newblock {\em SIAM Journal on Discrete Mathematics}, 25(2):771--808, 2011.

\bibitem{Van13}
Richard Nickl and Sara Van~De Geer.
\newblock Confidence sets in sparse regression.
\newblock {\em The Annals of Statistics}, 2013.

\bibitem{Chris13}
S.~{Oymak}, C.~{Thrampoulidis}, and B.~{Hassibi}.
\newblock The squared-error of generalized lasso: A precise analysis.
\newblock In {\em 2013 51st Annual Allerton Conference on Communication,
  Control, and Computing (Allerton)}, pages 1002--1009, Oct 2013.

\bibitem{CSsensor}
Bao Peng, Zhi Zhao, Guangjie Han, and Jian Shen.
\newblock Consensus-based sparse signal reconstruction algorithm for wireless
  sensor networks.
\newblock {\em International Journal of Distributed Sensor Networks}, 2016.

\bibitem{CSwireless}
Giorgio Quer, Riccardo Masiero, Gianluigi Pillonetto, Michele Rossi, and
  Michele Zorzi.
\newblock Sensing, compression, and recovery for wsns: Sparse signal modeling
  and monitoring framework.
\newblock {\em IEEE Transactions on Wireless Communications}, 2012.

\bibitem{Rad2011}
K.~Rahnama Rad.
\newblock Nearly sharp sufficient conditions on exact sparsity pattern
  recovery.
\newblock {\em IEEE Transactions on Information Theory}, 57(7):4672--4679, July
  2011.

\bibitem{rahman2014local}
Mustazee Rahman and Balint Virag.
\newblock Local algorithms for independent sets are half-optimal.
\newblock {\em arXiv preprint arXiv:1402.0485}, 2014.

\bibitem{Galen13}
Galen Reeves and Michael Gapstar.
\newblock Approximate sparsity pattern recovery: Information-theoretic lower
  bounds.
\newblock {\em IEEE Trans. Information Theory}, 2013.

\bibitem{Rush17}
C.~{Rush}, A.~{Greig}, and R.~{Venkataramanan}.
\newblock Capacity-achieving sparse superposition codes via approximate message
  passing decoding.
\newblock {\em IEEE Transactions on Information Theory}, 63(3):1476--1500,
  March 2017.

\bibitem{vandegeer2009}
Sara~A. van~de Geer and Peter B{\"u}hlmann.
\newblock On the conditions used to prove oracle results for the lasso.
\newblock {\em Electron. J. Statist.}, 3:1360--1392, 2009.

\bibitem{wainwright2009information}
Martin~J Wainwright.
\newblock Information-theoretic limits on sparsity recovery in the
  high-dimensional and noisy setting.
\newblock {\em Information Theory, IEEE Transactions on}, 55(12):5728--5741,
  2009.

\bibitem{wainwright2009sharp}
Martin~J Wainwright.
\newblock Sharp thresholds for high-dimensional and noisy sparsity recovery
  using constrained quadratic programming (lasso).
\newblock {\em IEEE transactions on information theory}, 55(5):2183--2202,
  2009.

\bibitem{wang2010information}
Wei Wang, Martin~J Wainwright, and Kannan Ramchandran.
\newblock Information-theoretic limits on sparse signal recovery: Dense versus
  sparse measurement matrices.
\newblock {\em IEEE Transactions on Information Theory}, 56(6):2967--2979,
  2010.

\bibitem{Model93}
Ping Zhang.
\newblock Model selection via multifold cross validation.
\newblock {\em The Annals of Statistics}, 1993.

\end{thebibliography}

\end{document}